\documentclass[11pt, english]{article}
\usepackage[T1]{fontenc}
\usepackage[latin9]{inputenc}
\usepackage{textcomp}
\usepackage{amsmath}
\usepackage{amssymb}
\usepackage{stmaryrd}
\usepackage{babel}
\usepackage{algorithm}
\usepackage{algpseudocode}

\usepackage{amsmath,amssymb,amsthm,mathrsfs,hyperref,color}

\DeclareMathAlphabet{\mathpzc}{OT1}{pzc}{m}{it}
\usepackage[nameinlink]{cleveref}
\hypersetup{colorlinks={true},linkcolor={blue},citecolor=blue}

\hypersetup{
    colorlinks=true,
    linkcolor=blue!60!black,
    citecolor=blue!60!black,
    urlcolor=blue!60!black
}

\usepackage{mathtools}
\usepackage[x11names]{xcolor}
\newtagform{blue}{\color{blue}(}{)}

\usepackage{aliascnt}

\theoremstyle{plain}
\newtheorem{theorem}{Theorem}[section]

\newaliascnt{proposition}{theorem}

\aliascntresetthe{proposition}

\newaliascnt{lemma}{theorem}
\newtheorem{lemma}[lemma]{Lemma}
\aliascntresetthe{lemma}

\newaliascnt{corollary}{theorem}

\aliascntresetthe{corollary}

\newaliascnt{assumption}{theorem}
\newtheorem{assumption}[assumption]{Assumption}
\aliascntresetthe{assumption}

\theoremstyle{definition}
\newaliascnt{definition}{theorem}
\newtheorem{definition}[definition]{Definition}
\aliascntresetthe{definition}

\newaliascnt{remark}{theorem}
\newtheorem{remark}[remark]{Remark}
\aliascntresetthe{remark}

\newaliascnt{example}{theorem}

\aliascntresetthe{example}

\DeclareMathOperator*{\argmin}{argmin}

\numberwithin{equation}{section}
\usepackage{authblk}

\title{\bf State-Dependent Delays in Optimal Control and Hamilton--Jacobi Equations}

\author[1]{Yiming Jiang}
\author[2]{Cristian Mendico}
\author[1]{Yawei Wei}
\author[3]{Fei Zeng}
\author[3]{Zimeng Zheng}
\affil[1]{ School of Mathematical Sciences and LPMC\\ Nankai University\\ Tianjin 300071 China\\ \href{mailto:ymjiangnk@nankai.edu.cn}{ymjiangnk@nankai.edu.cn}, \\ \href{mailto:weiyawei@nankai.edu.cn}{weiyawei@nankai.edu.cn}}
\affil[2]{Institut de Math\'ematique de Bourgogne - UMR 5584 CNRS, Universit\'e Bourgogne Europe\\ \href{mailto:cristian.mendico@u-bourgogne.fr}{cristian.mendico@u-bourgogne.fr}}
\affil[3]{ School of Mathematical Sciences\\ Nankai University\\ Tianjin 300071 China \\ \href{mailto:
faith@mail.nankai.edu.cn}{
faith@mail.nankai.edu.cn},\\  \href{mailto:
1120220038@mail.nankai.edu.cn}{
1120220038@mail.nankai.edu.cn}}
\date{}

\begin{document}
\maketitle

\begin{abstract}
	We develop a Hamilton--Jacobi theory for finite-horizon optimal control problems governed by state-dependent delay equations. In this setting, the delayed-time map depends on the controlled trajectory itself, so that both the state and the point at which the past state is evaluated vary simultaneously. The natural state variable is therefore the entire history, and the value functional is defined on a space of Lipschitz histories. Under suitable growth, Lipschitz, and monotonicity assumptions, we establish the dynamic programming principle and characterize the value functional as the unique viscosity solution of the associated Hamilton--Jacobi equation with co-invariant derivatives. To accommodate Lipschitz histories, we introduce a notion of viscosity solution based on finite-dimensional projections generated by polygonal extensions, and prove a comparison principle in the resulting class of functionals. Under additional regularity assumptions, we derive a Pontryagin minimum principle whose adjoint equation contains advanced terms induced by the state dependence of the delay. We also obtain a generalized transversality relation formulated through the delay superdifferential of the value functional. Finally, we establish semiconcavity in the history variable and a joint semiconcavity estimate on an appropriate solution manifold.

		\vspace{0.25cm}
		\noindent\textbf{Keywords:} state-dependent delay; optimal control; Hamilton--Jacobi equation;
viscosity solution; Pontryagin minimum principle; semiconcavity
		\\
		\noindent\textbf{2020 AMS:} 49L25, 49L20; 49K15; 34K35; 93C23
	\end{abstract}

\section{Introduction}
\subsection*{Our model and motivations} 

Delay differential equations provide a natural framework for systems whose evolution depends on both the current state and information inherited from the past. In many applications, however, the relevant past time is not fixed a priori. Rather, it changes with the evolving configuration of the system. This leads to \emph{state-dependent delay equations}, for which the delayed time is itself coupled to the unknown trajectory. Such equations arise in population dynamics \cite{MR3369216,MAHAFFY1998135}, traffic flow models \cite{RevModPhys.73.1067,9793393,martinovich_introducing_2025}, and, more recently, machine learning and neural networks \cite{10.1063/5.0325998,pmlr-v255-monsel24a,PhysRevE.111.035313}. In these settings, control variables may represent, for example, external interventions, flow regulations, or trainable parameters. 

This motivates the study of optimal control problems in which the dynamics are governed by a state-dependent delay. We consider a finite-horizon control problem governed by \[ \dot{x}(s) = f\bigl(s,x(s),x(s-\tau(s,x(s))),\alpha(s)\bigr), \qquad s\in(t,T), \] where \(\alpha\) is a measurable control and \( \tau:[t_0,T]\times\mathbb{R}^n\longrightarrow[0,\bar\tau] \) depends on both time and the current state. Given an initial history \( w\in \operatorname{Lip}([-\bar\tau,0];\mathbb{R}^n), \) the state is required to satisfy \[ x(t+\theta)=w(\theta), \qquad \theta\in[-\bar\tau,0]. \] Under appropriate structural assumptions, this determines a unique trajectory. With each initial pair \((t,w)\) and admissible control \(\alpha\), we associate a cost functional \(J(t,w,\alpha)\) and define the value functional by \[ \rho(t,w)=\inf_{\alpha}J(t,w,\alpha). \] Thus, unlike in finite-dimensional control problems, the natural state variable for dynamic programming is the entire history \(w\), and the corresponding Hamilton--Jacobi equation is posed on a function space.

The dependence of the delay on the state creates difficulties that are absent for prescribed delays. Indeed, the delayed-time map \[ D_x(s):=s-\tau(s,x(s)) \] depends on the trajectory itself. Consequently, when two trajectories are compared, both the trajectories and the points at which they are evaluated vary simultaneously: \( x_1\bigl(D_{x_1}(s)\bigr)\) and \(x_2\bigl(D_{x_2}(s)\bigr). \) This coupling affects several levels of the analysis. It complicates stability estimates for the state equation, introduces additional terms in the linearized and adjoint equations, and produces nonlinear second-order effects in the regularity analysis of the value functional. Moreover, the natural phase space consists of Lipschitz histories, whose behavior under time translation must be incorporated into the formulation of the Hamilton--Jacobi equation. 

The aim of this paper is to develop a unified optimal control and Hamilton--Jacobi framework capable of addressing these difficulties. In particular, we seek to connect the dynamic programming characterization of the value functional with first-order optimality conditions and regularity properties in a setting where the delay varies along the controlled trajectory. The main results and the technical constructions used to achieve this goal are described next.

\subsection*{Main results} 

The analysis is organized around three closely related objectives: the Hamilton--Jacobi characterization of the value functional, the derivation of first-order optimality and sensitivity relations, and the study of its second-order regularity. A recurring difficulty is that the delayed-time map \[ D_x(s):=s-\tau(s,x(s)) \] depends on the trajectory itself. Thus, when two trajectories are compared, one must control \[ x_1\bigl(D_{x_1}(s)\bigr)-x_2\bigl(D_{x_2}(s)\bigr), \] where both the trajectories and their evaluation points vary. This feature distinguishes the present problem from control systems with prescribed delays and requires estimates that are stable under simultaneous perturbations of the state and of the delayed-time map. 

Our first contribution is a dynamic programming and viscosity solution theory on the space of Lipschitz histories. We establish the dynamic programming principle in \Cref{thm:DPP} and formulate the associated Hamilton--Jacobi equation using co-invariant derivatives. Following the finite-dimensional projection strategy of \cite{MR4264642}, we test history-dependent functionals through functions of the current time and state. A direct use of constant extensions, as in the prescribed-delay setting, does not retain the information on the local variation of a Lipschitz history that is needed in our analysis. We therefore introduce polygonal extensions, obtained by joining the endpoint of the initial history to a perturbed terminal state. These extensions yield finite-dimensional projections adapted to the Lipschitz phase space and provide the basis for our definition of viscosity solution. Within this framework, we prove that the value functional is a viscosity solution of the Hamilton--Jacobi equation (\Cref{thm:value_viscosity}).

We then establish a comparison principle (\Cref{thm:comparison}) by a doubling-of-variables argument adapted to the projected functionals. A central ingredient is the stability of delayed polygonal states under changes of both the projection time and its endpoint. The comparison principle yields the uniqueness of the value functional in the class \(\Phi\) introduced below. In particular, the dynamic programming and viscosity approaches provide equivalent characterizations of the value functional in this state-dependent delay setting.

Our second contribution concerns first-order optimality conditions and their connection with the value functional. Under additional differentiability assumptions, we study localized perturbations of the initial history, measured in the uniform norm, and derive the corresponding linearized state equation. Because the delay depends on the current state, the linearization contains an additional contribution involving the derivative of \(\tau\) and the derivative of the trajectory at the delayed time. This term has no counterpart when the delay is prescribed and propagates into the dual equation. Using this linearization, we derive a Pontryagin minimum principle (\Cref{thm:PMP}). The associated adjoint equation is an advanced equation: its value at a given time depends on the adjoint state at the future time determined by the inverse of the delayed-time map. The adjoint equation also contains the additional terms generated by the state dependence of the delay. We then connect the maximum-principle and dynamic-programming descriptions of optimality. More precisely, along an optimal trajectory, the pair consisting of the negative minimized Hamiltonian and the adjoint state belongs to the delay superdifferential of the value functional (\Cref{thm:generalized_transversality}). This provides a generalized transversality, or sensitivity, relation in the Lipschitz history space.

Our third contribution is a second-order regularity theory for the value functional. Under suitable convexity and regularity assumptions on the control dependence of the Hamiltonian, we first prove that every initial datum admits an optimal control with a Lipschitz representative. This regularity is essential for controlling the second-order variation of the composite delayed state \[ x\bigl(D_x(s)\bigr) = x\bigl(s-\tau(s,x(s))\bigr). \] For histories that are piecewise \(C^{1,1}\), with a uniformly bounded number of nonsmooth points, we establish a second-order estimate for the corresponding trajectories.

 As a consequence, we prove that the value functional is locally semiconcave in the history variable with respect to the \(H^1\) norm. Joint semiconcavity in time and history requires an additional compatibility condition at the junction between the initial history and the forward trajectory. Indeed, a time perturbation shifts this junction and must preserve the \(H^1\) control of the resulting history. To encode this compatibility, we introduce a \(C^{1,1}\) solution manifold defined by matching the derivative of the history at its endpoint with the controlled vector field. On this manifold, we derive the joint semiconcavity estimate of \Cref{thm:time_semiconcavity_value}. 
 
 Taken together, these results connect three complementary descriptions of the control problem. The dynamic programming principle yields the Hamilton--Jacobi characterization of the value functional; the linearized and adjoint equations yield first-order optimality and sensitivity relations; and the second-order trajectory estimates yield semiconcavity. The common analytical mechanism is the control of delayed states under simultaneous perturbations of the trajectory and of the state-dependent delayed-time map.

\subsection*{Historical Background}
Differential equations with constant or distributed delays have been extensively studied; we refer to \cite{MR1783365} for a comprehensive treatment. In these settings, the initial history may generally be taken to be merely piecewise continuous. 

State-dependent delays, by contrast, require a more delicate analysis. Existence of solutions for continuous initial histories was established in \cite{MR150421}, while uniqueness was obtained under the additional assumption that the initial history is Lipschitz continuous. A solution-manifold framework for \(C^1\) histories was introduced in \cite{MR2019242}; see also \cite{MR2457636}. More recently, an \(H^1\)-theory encompassing Lipschitz histories was developed in \cite{MR4785300}. This provides the basic functional-analytic setting for the control problem considered in this paper. 

In optimal control problems with delays, the associated Hamilton--Jacobi equation is naturally posed on an infinite-dimensional space of histories. Several approaches have been developed to study such equations, including viscosity-solution methods \cite{MR690039,MR732102,MR794776}, minimax methods originating in the theory of positional differential games \cite{MR745789,MR1320507}, and notions of differentiability adapted to translations of the history, such as co-invariant derivatives \cite{MR1783365}. Within the co-invariant framework, \cite{MR4163474} studied an optimal control problem with constant delay and established the equivalence between minimax and viscosity solutions. 

For differential games with constant and distributed delays, \cite{MR4264642} introduced a notion of viscosity solution based on finite-dimensional projections obtained through constant extensions of histories, and characterized the value functional as the unique viscosity solution. Our construction builds on the projection argument of \cite{MR4264642}, but replaces constant extensions with polygonal ones, which are better suited to Lipschitz histories. The resulting notion of viscosity solution is consistent with those introduced in \cite{MR4163474,MR2729685}. 

A substantial literature is also devoted to optimality conditions for systems with prescribed delays in either the state or the control; see, for instance, \cite{MR231007,MR247556,MR477959,MR3702857,MR3705373}. These results typically rely on a suitable linearization of the state equation. For equations with state-dependent delays, \cite{MR1014944} established such a linearization within a Fr\'echet differentiability framework and showed that the resulting linearized equation is no longer itself a state-dependent delay equation. 

The same phenomenon occurs in our setting. Here, however, we derive the linearization with respect to localized perturbations of the initial history measured in the uniform norm. Finally, semiconcavity is a fundamental regularity property in finite-dimensional Hamilton--Jacobi theory and optimal control; see \cite{MR2041617}. To the best of our knowledge, the only available semiconcavity result for an optimal control problem involving a time delay concerns a minimum-time problem with constant delay \cite{MR4893227}. In the present work, we adapt classical semiconcavity arguments and combine them with more refined estimates based on Taylor expansions. This allows us to control the second-order effects arising from the state dependence of the delay.

\medskip

The remainder of the paper is organized as follows. \Cref{sec:setting} introduces the optimal control problem and the standing assumptions, and collects the basic properties of the associated trajectories and value functional. In \Cref{sec:dpp}, we establish the dynamic programming principle and characterize the value functional as a viscosity solution. \Cref{sec:unique_viscosity} contains the comparison principle and the ensuing uniqueness result. \Cref{sec:optimality} develops the linearization of the state equation and derives the Pontryagin minimum principle together with a generalized transversality condition. Finally, \Cref{sec:semiconcave} establishes semiconcavity estimates for the value functional.

\section{Setting of the problem}\label{sec:setting}
Let \(I=[t_0,T]\) be the time interval and set \(\mathbb{G}:=I\times \operatorname{Lip}([-\bar\tau,0];\mathbb R^n)\). We will use \(\|\cdot\|\) to denote the standard norm in \(\mathbb{R}^n\), and use the standard function spaces \(L^1\), \(L^2\), \(L^\infty\) and \(H^1\) on \([-\bar\tau,0]\) along with their norms \(\|\cdot\|_1\), \(\|\cdot\|_2\), \( \|\cdot\|_\infty\) and \(\|\cdot\|_{H^1}\) respectively. Let $\operatorname{Lip}(v)$ denote the Lipschitz constant of a Lipschitz function $v$.


Let \(A\subset \mathbb R^m\) be a nonempty, convex and compact set.
For \(t_{0}\leq t<s\leq T\), the set of admissible controls is
\[
    \mathcal A_{t,s}
    :=
    \left\{
        \alpha:(t,s)\to A:\ \alpha \text{ is measurable}
    \right\}.
\]
For \(R>0\), set
\[
    P(R)
    :=
    \left\{
        w\in \operatorname{Lip}([-\bar\tau,0];\mathbb R^n):
        \|w\|_\infty\le R
    \right\},
\]
and
\[
    V(R)
    :=
    \left\{
        w\in \operatorname{Lip}([-\bar\tau,0];\mathbb R^n):
        \|w\|_\infty\le R,\ \|\dot w\|_\infty\le R
    \right\}.
\]

Define
\begin{equation}\label{domain_g}
  \mathcal{X}= \left\{
            (z,w): w\in \operatorname{Lip}([-\bar\tau,0];\mathbb R^n), z=w(0)
        \right\}
\end{equation}
with norm $\left\Vert (z,w) \right\Vert _{\mathcal{X}}=\left\Vert z \right\Vert +\left\Vert w \right\Vert _{1}$.

\begin{assumption}\label{ass:standing}
We impose the following standing assumptions.

\begin{enumerate}
    \item The functions
    \[
        f:I\times \mathbb R^n\times \mathbb R^n\times A\to \mathbb R^n,
        \qquad
        L:I\times \mathbb R^n\times \mathbb R^n\times A\to \mathbb R
    \]
    are continuous.

    \item There exists \(\lambda_f>0\) such that
    \[
    \|f(t,z,y,a)-f(t,z',y',a)\|\leq \lambda_f \bigl(\|z-z'\|+\|y-y'\|\bigr)
    \]
    for all \(t\in I\), \(a\in A\) and \(z,y,z',y'\in \mathbb{R}^n\), and thus
    \[
     \|f(t,z,y,a)\|\leq   C_{f}\bigl(1+\|z\|+\|y\|\bigr)
    \]
    for some \(C_f>0\).
 \item For every \((t,z,y)\in I\times\mathbb R^n\times\mathbb R^n\), the set
\[
\mathcal Q(t,z,y)
:=
\left\{
    \bigl(f(t,z,y,a),\ell\bigr)
    \in\mathbb R^n\times\mathbb R:
    a\in A,\quad
    \ell\ge L(t,z,y,a)
\right\}
\]
is convex.
    \item For every \(R>0\), there exists
    \(\lambda_{L}(R)>0\) such that
    \begin{equation}\label{eq:L_lipschitz}
    \begin{aligned}
        |L(t,z,y,a)-L(t,z',y',a)|\le
        \lambda_{L}(R)\bigl(\|z-z'\|+\|y-y'\|\bigr)
    \end{aligned}
    \end{equation}
    for all \(t\in I\), \(a\in A\), and
    \(z,y,z',y'\in B_R:=\left\{
        x\in\mathbb{R}^n:\left\Vert  x \right\Vert\le R
    \right\}\).

    \item There exists \(C_{L}>0\) such that
    \[
        |L(t,z,y,a)|
        \le
        C_{L}\bigl(1+\|z\|+\|y\|\bigr)
    \]
    for all \((t,z,y,a)\in I\times\mathbb R^n\times\mathbb R^n\times A\).

    \item The terminal cost $g: \mathcal{X}\to \mathbb R$ is locally Lipschitz in the following sense. For every \(R>0\), there exists
    \(\lambda_g(R)>0\) such that
    \[
        |g(z_1,w_1)-g(z_2,w_2)|
        \le
        \lambda_g(R)
        \bigl(
            \left\Vert (z_{1},w_{1})-(z_{2},w_{2}) \right\Vert_{\mathcal{X} } 
        \bigr)
    \]
    for all $(z_{i},w_{i})\in \mathcal{X} $ with \(w_i\in P(R)\), \(i=1,2\).

    \item The delay function \(\tau:I\times\mathbb R^n\to [0,\infty)\) is \(C^1\) with supremum \(\bar\tau<\infty\). Moreover, there exists \(\lambda_\tau>0\) such that
    \[
        |\tau(t,z)-\tau(t,y)|
        \le
        \lambda_\tau\|z-y\|
        \qquad
        \forall\, t\in I,\quad z,y\in\mathbb R^n.
    \]
\end{enumerate}
\end{assumption}

For \((t,w)\in\mathbb G\) and \(\alpha\in\mathcal A_{t,T}\), we consider the
state-dependent delay equation
\begin{equation}\label{eq:FDE}
\begin{cases}
\dot x(s)
=
f\bigl(s,x(s),x_s(-\tau(s,x(s))),\alpha(s)\bigr),
& s\in(t,T),\\[2mm]
x(t+\theta)=w(\theta),
& \theta\in[-\bar\tau,0],
\end{cases}
\end{equation}
where
\[
    x_s(\theta):=x(s+\theta),
    \qquad
    \theta\in[-\bar\tau,0].
\]
For each
\((t,w)\in\mathbb G\) and \(\alpha\in\mathcal A_{t,T}\), there exists a unique solution, denoted by \(x(\cdot\,|\,t,w,\alpha)\), i.e., \(x:[t-\bar{\tau},T]\to \mathbb{R}^n\) is Lipschitz, satisfies the differential equation for a.e. \(s\in (t,T)\) with \(x_t=w\).

We now show the well-posedness of \eqref{eq:FDE}. Fix \((t,w)\in \mathbb G\) and \(\alpha\in \mathcal{A}_{t,T}\). Define
\[
    F(s,\phi)
    :=
    f\bigl(s,\phi(0),\phi(-\tau(s,\phi(0))),\alpha(s)\bigr),
    \qquad
    (s,\phi)\in [t,T]\times H^1(-\bar\tau,0;\mathbb R^n).
\]
Then \(s\mapsto F(s,\phi)\) is measurable for every fixed \(\phi \in H^1(-\bar\tau,0;\mathbb R^n)\). We will show that \(\phi\mapsto F(s,\phi)\) is continuous for every fixed \(s\in [t,T]\). Since the continuity of \(\phi\mapsto \phi(0)\) is guaranteed by the Sobolev embedding \(H^1(-\bar\tau,0;\mathbb R^n)\hookrightarrow C([-\bar\tau,0];\mathbb R^n)\), it suffices to show that
\[
\phi \mapsto \phi(-\tau(s,\phi(0)))
\]
is continuous. Let \(\phi,\psi\in H^1(-\bar\tau,0;\mathbb R^n)\). We estimate
\[
\begin{aligned}
&\|\psi(-\tau(s,\psi(0)))-\phi(-\tau(s,\phi(0)))\|\\
&\,\le \|\psi(-\tau(s,\psi(0)))-\psi(-\tau(s,\phi(0)))\|+\|\psi(-\tau(s,\phi(0)))-\phi(-\tau(s,\phi(0)))\|\\
&\,=:D_1+D_2.
\end{aligned}
\]
By H\"older's inequality, we have
\[
D_1\le \|\psi\|_{H^1}|\tau(s,\psi(0))-\tau(s,\phi(0))|^{1/2}.
\]
By the Sobolev embedding, we estimate
\[
D_2 \le 
\left\|
    \int_{-\tau(s,\phi(0))}^{0}
    \bigl(\dot\phi(\theta)-\dot\psi(\theta)\bigr)\,d\theta
\right\|+\|\psi(0)-\phi(0)\|\leq C \|\phi-\psi\|_{H^1}.
\]
Combining the above estimates gives the continuity of \(\phi\mapsto F(s,\phi)\).

Next we show a Lipschitz property of \(\phi\mapsto F(s,\phi)\). If \(\phi,\psi\in H^1(-\bar\tau,0;\mathbb R^n)\) satisfy
\[
    \|\dot\phi\|_{L^\infty},
    \|\dot\psi\|_{L^\infty}
    \le r
\]
for some \(r>0\), then, by the global Lipschitz continuity of \(f\),
\[
\begin{aligned}
    \|F(s,\phi)-F(s,\psi)\|
    &\le
    \lambda_f\left(\|\phi(0)-\psi(0)\|+D_1+D_2\right).
\end{aligned}
\]
In this case, by the Lipschitz continuity of \(\tau\), we have
\[
D_1\leq r|\tau(s,\psi(0))-\tau(s,\phi(0))|\le C(r)\|\psi-\phi\|_{H^1}.
\]
Consequently, there exists \(C(r)>0\) such that
\[
 \|F(s,\phi)-F(s,\psi)\|\le C(r)\|\psi-\phi\|_{H^1}
\]
for all \(s\in [t,T]\).

With a slight modification of the contraction argument used in the proof \cite[Theorem 1.1]{MR4785300}, the above estimates yield the local existence and uniqueness of the solution. Using the sublinear growth condition of \(f\) and a standard continuation argument, we then obtain a unique solution \(x:[t-\bar{\tau},T]\to \mathbb{R}^n\). 

We impose the standing monotonicity condition on the delay function.

\begin{assumption}\label{ass:monotone-delay}
For every \(r>0\), there exists \(d(r)>0\) such that, for every solution
\(x(\cdot)\) of \eqref{eq:FDE} with \(t\in I\), \(w\in P(r)\) and
\(\alpha\in\mathcal A_{t,T}\), we have
\[
    1-\tau_t(s,x(s))
      -\tau_z(s,x(s))\cdot \dot x(s)
    \ge d(r)
\]
for a.e. \(s\in[t,T]\).
\end{assumption}

Equivalently, along every admissible trajectory with initial
history \(w\in P(r)\), the map
\[
    s\mapsto s-\tau(s,x(s))
\]
is strictly increasing, with derivative bounded below by \(d(r)>0\).
This property will be used repeatedly when performing changes of variables
in delayed terms.

For \((t,w)\in \mathbb{G}\) and \(\alpha\in \mathcal{A}_{t,T}\), we define the cost functional 
\[
    J(t,w,\alpha)
    =
    \int_t^T
    L\bigl(s,x(s),x(s-\tau(s,x(s))),\alpha(s)\bigr)\,ds
    +
    g(x(T),x_T(\cdot)),
\]
where \(x(\cdot)=x(\cdot|\, t,w,\alpha)\). The corresponding value functional is
\begin{equation}\label{eq:value}
    \rho(t,w)
    :=
    \inf_{\alpha\in\mathcal A_{t,T}}J(t,w,\alpha).
\end{equation}
We say \(\alpha\in \mathcal A_{t,T}\) is an optimal control for \((t,w)\) if the above infimum is attained at \(\alpha\). In this case, letting \(x(\cdot)=x(\cdot|\, t,w,\alpha)\), we call \((x,\alpha)\) an optimal pair. 

\section{Dynamic programming principle and viscosity solutions}\label{sec:dpp}

In this section, we first present the dynamic programming principle for the value functional~\eqref{eq:value}. Then we introduce the notion of viscosity solution of a Hamilton--Jacobi equation with co-invariant derivatives and finally prove that the value functional is a viscosity solution.

\begin{theorem}[Dynamic programming principle]\label{thm:DPP}
Let \((t,w)\in\mathbb G\), \(t<T\), and let \(h>0\) be such that
\(t+h\le T\). Then
\[
\begin{aligned}
    \rho(t,w)
    =
    \inf_{\alpha\in\mathcal A_{t,t+h}}
    \bigg\{
        \int_t^{t+h}
        L\bigl(s,x(s),x(s-\tau(s,x(s))),\alpha(s)\bigr)\,ds
        +
        \rho(t+h,x_{t+h})
    \bigg\},
\end{aligned}
\]
where \(x=x(\cdot\,|\,t,w,\alpha)\). Moreover, if
\(\alpha^*\) is an optimal control for \((t,w)\), then the above infimum is
attained by \(\alpha^*\).
\end{theorem}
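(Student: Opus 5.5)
The plan is the standard two-inequality argument for the dynamic programming principle, built on two structural facts about \eqref{eq:FDE}. The first is the flow (semigroup) property of trajectories. The second is the concatenation of controls.

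Denote the right-hand side of the statement by $W(t,w)$. For $\alpha\in\mathcal A_{t,T}$, write $\alpha_1=\alpha|_{(t,t+h)}$ and $\alpha_2=\alpha|_{(t+h,T)}$, and let $x=x(\cdot\,|\,t,w,\alpha)$. The first step is to verify that $x_{t+h}\in\operatorname{Lip}([-\bar\tau,0];\mathbb R^n)$, so that $(t+h,x_{t+h})\in\mathbb G$. The same step checks that
\[
x(s\,|\,t,w,\alpha)=x(s\,|\,t+h,x_{t+h},\alpha_2),\qquad s\in[t+h-\bar\tau,T].
\]
The path $x$ is Lipschitz on $[t-\bar\tau,T]$, so its segment is Lipschitz. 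The restriction of $x$ to $[t+h-\bar\tau,T]$ solves \eqref{eq:FDE} with initial time $t+h$ and history $x_{t+h}$. This is because the delayed argument $s-\tau(s,x(s))\ge s-\bar\tau\ge t+h-\bar\tau$ only looks into that window. Uniqueness of solutions, established above, then gives the identity.

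Splitting the integral in $J$ at $t+h$, I obtain
\[
J(t,w,\alpha)=\int_t^{t+h}L\bigl(s,x(s),x(s-\tau(s,x(s))),\alpha_1(s)\bigr)\,ds+J(t+h,x_{t+h},\alpha_2).
\]
The second term is at least $\rho(t+h,x_{t+h})$. Taking the infimum over $\alpha$ gives $\rho(t,w)\ge W(t,w)$. Here I use that the first part of $x$ depends only on $\alpha_1$, by causality and uniqueness on $[t,t+h]$.

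For the reverse inequality, I fix $\varepsilon>0$ and $\alpha_1\in\mathcal A_{t,t+h}$, and let $x^1=x(\cdot\,|\,t,w,\alpha_1)$ on $[t-\bar\tau,t+h]$. By definition of $\rho$ there is $\alpha_2\in\mathcal A_{t+h,T}$ with
\[
J(t+h,x^1_{t+h},\alpha_2)\le\rho(t+h,x^1_{t+h})+\varepsilon.
\]
Let $\alpha$ be the concatenation of $\alpha_1$ and $\alpha_2$. It is measurable and $A$-valued, so $\alpha\in\mathcal A_{t,T}$. By the flow property, the same decomposition of $J$ gives
\[
\rho(t,w)\le J(t,w,\alpha)\le\int_t^{t+h}L\,ds+\rho(t+h,x^1_{t+h})+\varepsilon.
\]
Taking the infimum over $\alpha_1$ and letting $\varepsilon\to0$ yields $\rho\le W$. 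No measurable selection is needed, since only one $\varepsilon$-optimal control is chosen per fixed $\alpha_1$.

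For the final claim, let $\alpha^*$ be optimal with trajectory $x^*$. The decomposition gives
\[
\rho(t,w)=J(t,w,\alpha^*)=\int_t^{t+h}L\,ds+J(t+h,x^*_{t+h},\alpha^*|_{(t+h,T)}).
\]
The last term is at least $\rho(t+h,x^*_{t+h})$, so $\rho(t,w)$ is at least the bracket evaluated at $\alpha^*|_{(t,t+h)}$. That bracket is in turn at least $W(t,w)=\rho(t,w)$. Hence equality holds throughout, and the infimum is attained.

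The main obstacle is the flow and causality step. It requires checking that the restarted problem is genuinely of the form \eqref{eq:FDE}: the segment $x_{t+h}$ must be Lipschitz, and the state-dependent delayed time must never reach before $t+h-\bar\tau$. It also requires uniqueness for Lipschitz histories, which the paper obtained from the contraction argument with $\|\dot\phi\|_\infty\le r$.

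One further check concerns finiteness of all quantities. This follows from the growth bounds on $L$ and $f$ and Gronwall, together with the local Lipschitz property of $g$ applied to bounded $x(T)$ and $x_T$. These bounds ensure that the infima are real numbers, so the $\varepsilon$-argument is legitimate.
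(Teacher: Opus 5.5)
Your proposal is correct and follows essentially the same route as the paper. For $\rho\le$ the right-hand side, you concatenate an arbitrary $\alpha_1\in\mathcal A_{t,t+h}$ with an $\varepsilon$-optimal continuation, as the paper does. For $\rho\ge$ the right-hand side, you split $J$ at $t+h$ via uniqueness; you take the infimum over all $\alpha$ directly, where the paper fixes an $\varepsilon$-optimal $\alpha^4$, which is a cosmetic difference. For attainment, you use the same chain with $\alpha^*$. Your explicit checks go beyond the paper's text: the flow property (including that the delayed argument stays in $[t+h-\bar\tau,T]$), causality on $[t,t+h]$, and finiteness of $\rho$ fill in details the paper leaves implicit.
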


\begin{proof}
Fix \(\alpha^1\in \mathcal{A}_{t,t+h}\), and let \(x^1\) be the corresponding
solution on \([t,t+h]\) with initial history \(w\). By the definition of
\(\rho\), for every \(\varepsilon>0\) there exists
\(\alpha^2\in \mathcal{A}_{t+h,T}\) such that, if \(x^2\) is the solution on
\([t+h,T]\) with initial history \(x^1_{t+h}\), then
\[
\begin{aligned}
    \rho(t+h,x^1_{t+h})+\varepsilon
    \ge
    \int_{t+h}^T
    L\bigl(s,x^2(s),x^2(s-\tau(s,x^2(s))),\alpha^2(s)\bigr)\,ds
    +
    g(x^2(T),x^2_T).
\end{aligned}
\]
Define the concatenated control
\[
    \alpha^3(s)
    :=
    \begin{cases}
        \alpha^1(s), & s\in(t,t+h],\\
        \alpha^2(s), & s\in(t+h,T).
    \end{cases}
\]
By the uniqueness of the solution to \eqref{eq:FDE}, the trajectory associated
with \(\alpha^3\in \mathcal{A}_{t,T}\) coincides with \(x^1\) on \([t,t+h]\) and with \(x^2\)
on \([t+h,T]\). Therefore
\[
\begin{aligned}
    \rho(t,w)
    &\le J(t,w,\alpha^3) \\
    &\le
    \int_t^{t+h}
    L\bigl(s,x^1(s),x^1(s-\tau(s,x^1(s))),\alpha^1(s)\bigr)\,ds
    +
    \rho(t+h,x^1_{t+h})
    +
    \varepsilon.
\end{aligned}
\]
Since \(\alpha^1\) and \(\varepsilon\) are arbitrary, we deduce
\[
\begin{aligned}
    \rho(t,w)
    \le
    \inf_{\alpha\in\mathcal A_{t,t+h}}
    \bigg\{
        \int_t^{t+h}
        L\bigl(s,x(s),x(s-\tau(s,x(s))),\alpha(s)\bigr)\,ds
        +
        \rho(t+h,x_{t+h})
    \bigg\}.
\end{aligned}
\]

Conversely, choose \(\alpha^4\in\mathcal A_{t,T}\) such that
\begin{equation}\label{eq:alpha_4}
    \rho(t,w)+\varepsilon
    \ge
    J(t,w,\alpha^4),    
\end{equation}
and let \(x^4=x(\cdot\,|\,t,w,\alpha^4)\). By the definition of \(\rho\), we have
\[
\begin{aligned}
    \rho(t+h,x^4_{t+h})
    \le& 
    J(t+h,x^4_{t+h},\alpha^4|_{(t+h,T)})\\
    \le&
    J(t,w,\alpha^4)-\int_{t}^{t+h}
    L\bigl(s,x^4(s),x^4(s-\tau(s,x^4(s))),\alpha^4(s)\bigr)\,ds.
\end{aligned}
\]
Hence
\[
\begin{aligned}
    \rho(t,w)+\varepsilon
    \ge
    \int_t^{t+h}
    L\bigl(s,x^4(s),x^4(s-\tau(s,x^4(s))),\alpha^4(s)\bigr)\,ds
    +
    \rho(t+h,x^4_{t+h}).
\end{aligned}
\]
Letting
\(\varepsilon\downarrow0\) yields the reverse inequality. 

In particular, if \(\alpha^*\) is optimal, one can select \(\alpha^4=\alpha^*\) with \(\varepsilon=0\) in \eqref{eq:alpha_4} and then obtain
\[
\rho(t,w)
    \ge
    \int_t^{t+h}
    L\bigl(s,x^*(s),x^*(s-\tau(s,x^*(s))),\alpha^*(s)\bigr)\,ds
    +
    \rho(t+h,x^*_{t+h}),
\]
where \(x^*\) is the optimal trajectory associated with \(\alpha^*\). Thus the infimum in the dynamic programming formula is attained at \(\alpha^*\).
\end{proof}
We next prove several properties for solutions of \eqref{eq:FDE}.

\begin{lemma}\label{lem:trajectory_bounds}

For every \(r>0\), there exist constants \(R=R(r)>0\) and
\(\lambda=\lambda(r)>0\) such that, for any \(t\in[t_0,T]\),
\(w\in P(r)\) with $z=w(0)$, and \(\alpha\in\mathcal A_{t,T}\), the solution
\(x=x(\cdot\,|\,t,w,\alpha)\) satisfies
\[
    \|x(t_1)-x(t_2)\|
    \le
    \lambda |t_1-t_2|,
    \qquad
    t_1,t_2\in[t,T],
\]
and
\[
    x_s\in P(R),
    \qquad
    s\in[t,T].
\]
\end{lemma}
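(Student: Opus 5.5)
The plan is a Gronwall argument on the running supremum of the trajectory, followed by the linear growth of \(f\) to get the Lipschitz bound. Fix \(r>0\), \(t\in[t_0,T]\), \(w\in P(r)\) and \(\alpha\in\mathcal A_{t,T}\), and let \(x=x(\cdot\,|\,t,w,\alpha)\). Define
\[
    m(s):=\sup_{\sigma\in[t-\bar\tau,s]}\|x(\sigma)\|,\qquad s\in[t,T].
\]
This is finite because \(x\) is Lipschitz on \([t-\bar\tau,T]\), and it is nondecreasing with \(m(t)=\|w\|_\infty\le r\).

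The key observation is that the delayed argument never leaves the window already covered by \(m\). Since \(\tau\ge0\) and \(\tau\le\bar\tau\), for every \(\sigma\in[t,T]\) we have \(\sigma-\tau(\sigma,x(\sigma))\in[\sigma-\bar\tau,\sigma]\subset[t-\bar\tau,\sigma]\). Hence
\[
    \|x(\sigma-\tau(\sigma,x(\sigma)))\|\le m(\sigma).
\]
This is the only place where state dependence of the delay matters, and it is harmless here: only the range of \(\tau\) is used, not its regularity. Assumption~\ref{ass:monotone-delay} is not needed.

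For \(\sigma\in[t,s]\), integrate the equation and use the growth bound \(\|f\|\le C_f(1+\|z\|+\|y\|)\) from Assumption~\ref{ass:standing}:
\[
    \|x(\sigma)\|\le \|w(0)\|+\int_t^{\sigma}C_f\bigl(1+2m(u)\bigr)\,du\le r+C_f(T-t_0)+2C_f\int_t^{s}m(u)\,du .
\]
The same bound holds trivially for \(\sigma\in[t-\bar\tau,t]\), since there \(\|x(\sigma)\|\le r\). Taking the supremum over \(\sigma\in[t-\bar\tau,s]\) gives an integral inequality for the nondecreasing function \(m\). Gronwall's lemma then yields
\[
    m(s)\le \bigl(r+C_f(T-t_0)\bigr)e^{2C_f(T-t_0)}=:R(r).
\]
This constant is independent of \(t\), \(w\) and \(\alpha\). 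It follows that \(\|x_s\|_\infty\le m(s)\le R\) for all \(s\in[t,T]\).

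Each \(x_s\) is Lipschitz because \(x\) is, so \(x_s\in P(R)\). Finally, for a.e. \(s\in(t,T)\) we have \(\|\dot x(s)\|\le C_f(1+2R)=:\lambda(r)\). Since \(x\) is absolutely continuous on \([t,T]\), this gives \(\|x(t_1)-x(t_2)\|\le\lambda|t_1-t_2|\) for \(t_1,t_2\in[t,T]\).

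I do not expect a serious obstacle. The one point needing care is that the Gronwall step must run on \(m\) rather than on \(\|x(s)\|\), so that the delayed term can be absorbed.
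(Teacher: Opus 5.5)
Your proof is correct and follows essentially the same route as the paper: a Gronwall argument on the running maximum \(k(s)=\max_{\xi\in[t-\bar\tau,s]}\|x(\xi)\|\), using only that the delayed time lies in \([t-\bar\tau,\xi]\), and then the growth bound on \(f\) for the Lipschitz constant. The only cosmetic difference is that you bound the constant term by \(C_f(T-t_0)\) before applying Gronwall. The paper instead solves the affine inequality directly to get \((\|w\|_\infty+\tfrac12)e^{2C_f(s-t)}-\tfrac12\).
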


\begin{proof}
Denote $D(\xi):=\xi-\tau(\xi,x(\xi))$ for $\xi \in [t,T]$. For \(s\in[t,T]\), we have
\[
    x(s)
    =
    z+
    \int_t^s
    f\bigl(\xi,x(\xi),x(D(\xi)),\alpha(\xi)\bigr)\,d\xi.
\]
Using the growth condition on \(f\), we obtain
\[
    \|x(s)\|
    \le
    \|z\|
    +
    C_{f}\int_t^s 1+\|x(\xi)\|+\|x(D(\xi))\|\,d\xi
\]
Set
\[
    k(s):=\max_{\xi\in[t-\bar\tau,s]}\|x(\xi)\|, \quad s\in [t,T].
\]
We derive
\[
    k(s)
    \le
    \left\Vert w \right\Vert _{\infty}+C_{f}\int_t^s (1+2k(\xi))\,d\xi,\quad s\in [t,T],
\]
since $D(\xi)\in [t-\bar{\tau},\xi]$ for all $\xi \in [t,s]$. Then Gronwall's inequality gives
\[
\left\Vert x(s) \right\Vert \leq k(s) \leq\left(\|w\|_{\infty}+\frac{1}{2}\right) e^{2 C_f(s-t)}-\frac{1}{2}
\]
for all $s \in [t,T]$. Together with the bound on the initial history $w$, this implies
\(x_s\in P(R)\) for some \(R=R(r)\). Also
\begin{equation}\label{eq:C_f}
    \|f(s,x(s),x(D(s)),\alpha(s))\|
    \le 
    C_{f}(1+2k(s))
    \le
    C_{f}(1+2\left\Vert w \right\Vert_{\infty})e^{2C_{f}(s-t)}.
\end{equation}
Therefore, there exists $\lambda (r)>0$ such that
\[
    \|x(t_1)-x(t_2)\|
    \le
    \lambda(r)|t_1-t_2|,
    \qquad
    t_1,t_2\in[t,T].
\]
\end{proof}

\begin{remark}
For fixed $(t,w)\in \mathbb{G}$, we deduce from \eqref{eq:C_f} that there exists $\delta >0$ such that the solution satisfies
\[
  \left\Vert \dot{x} (s) \right\Vert \leq C, \quad \text{a.e. } s\in [t,t+\delta ]
\]
for all $C>C_{f}(1+2\left\Vert w \right\Vert_{\infty})$ and all $\alpha$.
\end{remark}

\begin{lemma}\label{lem:history_shift}
For every $R>0$, there exists \(C(R)>0\) such that, for all \((t,w)\in\mathbb G\) with \(w\in V(R)\), all \(\alpha\in \mathcal{A}_{t,T}\), and all $s\in (t,T]$, the corresponding solution satisfies
\[
    \|x_{s}-w\|_\infty
    \le
    C(R)|s-t|.
\]
\end{lemma}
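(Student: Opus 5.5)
The estimate follows from the pointwise identity $x_s(\theta)-w(\theta)=x(s+\theta)-x(t+\theta)$, $\theta\in[-\bar\tau,0]$, together with the observation that the concatenated trajectory $x:[t-\bar\tau,T]\to\mathbb R^n$ is globally Lipschitz on its whole domain, with a constant depending only on $R$. Once this is established, we get $\|x(s+\theta)-x(t+\theta)\|\le \Lambda(R)\,|s-t|$ for every $\theta$, and taking the supremum over $\theta$ gives the claim with $C(R)=\Lambda(R)$.

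The first step is to bound the Lipschitz constant on each piece separately. On $[t-\bar\tau,t]$ we have $x(t+\theta)=w(\theta)$, and since $w\in V(R)$ it follows that $\operatorname{Lip}(x|_{[t-\bar\tau,t]})\le\|\dot w\|_\infty\le R$. On $[t,T]$ we note that $V(R)\subset P(R)$ and apply \Cref{lem:trajectory_bounds} with $r=R$. This gives $\|x(t_1)-x(t_2)\|\le\lambda(R)|t_1-t_2|$ for $t_1,t_2\in[t,T]$, with $\lambda(R)$ independent of $t$ and $\alpha$. Explicitly, we may take $\lambda(R)=C_f(1+2R)e^{2C_f(T-t_0)}$ by \eqref{eq:C_f}.

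The second step glues the two pieces across the junction point $t$. If $a\le t\le b$, then the triangle inequality at $x(t)=w(0)$ gives
\[
\|x(a)-x(b)\|\le\|x(a)-x(t)\|+\|x(t)-x(b)\|\le \max\{R,\lambda(R)\}\,|b-a|.
\]
Hence $x$ is Lipschitz on $[t-\bar\tau,T]$ with constant $\Lambda(R):=\max\{R,\lambda(R)\}$. This uniform constant is exactly where the hypothesis $\|\dot w\|_\infty\le R$ is needed: membership in $P(R)$ alone would not control $w(\theta)-w(\theta')$.

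Finally, fix $s\in(t,T]$ and $\theta\in[-\bar\tau,0]$. Both points $t+\theta$ and $s+\theta$ lie in $[t-\bar\tau,T]$, so the global Lipschitz bound applies and yields $\|x_s(\theta)-w(\theta)\|\le\Lambda(R)(s-t)$. The only subtle point in the whole argument is the case where $t+\theta$ and $s+\theta$ lie on opposite sides of $t$, and this is precisely what the gluing step handles.
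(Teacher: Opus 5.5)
Your proof is correct and is essentially the paper's argument. The paper splits $\xi$ into $[-\bar\tau,t-s]$, where both points lie in the history and $\operatorname{Lip}(w)\le R$ applies, and $(t-s,0]$, where it uses the triangle inequality through $x(t)=w(0)$ together with \Cref{lem:trajectory_bounds}; your gluing step, packaged as a global Lipschitz bound for $x$ on $[t-\bar\tau,T]$, is exactly this.
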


\begin{proof}
For \(\xi\in[-\bar\tau,t-s]\), we have
\[
    \|x_{s}(\xi)-w(\xi)\|
    =
    \|w(s-t+\xi)-w(\xi)\|
    \le
    C(R)|s-t|.
\]
If \(\xi\in(t-s,0]\), by \Cref{lem:trajectory_bounds}, we have
\[
\begin{aligned}
    \|x_{s}(\xi)-w(\xi)\|
    &\le
    \|x(s+\xi)-x(t)\|+\|w(0)-w(\xi)\| \\
    &\le
    C(R)(|s+\xi-t|
    +
    |\xi|) \\
    &\le
    C(R)|s-t|.
\end{aligned}
\]
\end{proof}

\begin{lemma}\label{lem:continuous_dependence}
For every \(R>0\), there exists \(\lambda_*(R)>0\) such that, for all
\(t\in I\), all \(w_i\in V(R)\) with \(z_{i}=w_{i}(0)\), and all \(\alpha\in\mathcal A_{t,T}\),  \(i=1,2\), the corresponding solutions \(x_i=x(\cdot; t ,w_i,\alpha)\) satisfy
\[
\begin{aligned}
    &\sup_{\xi\in [t,T]}\|x_1(\xi)-x_2(\xi)\|
    +
    \|(x_1)_T-(x_2)_T\|_1 \\
    &\quad
    +
    \left|
        \int_t^T
        \Bigl[
        L\bigl(s,x_1(s),x_1(D_1(s)),\alpha(s)\bigr)
        -
        L\bigl(s,x_2(s),x_2(D_2(s)),\alpha(s)\bigr)
        \Bigr]\,ds
    \right| \\
    &\le
    \lambda_*(R)
    \bigl(
        \|z_1-z_2\|+\|w_1-w_2\|_1
    \bigr),
\end{aligned}
\]
where \(D_i(s):=s-\tau(s,x_i(s))\), \(i=1,2\).
\end{lemma}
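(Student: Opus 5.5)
We aim for a Gronwall-type estimate for the difference $e(s):=\|x_1(s)-x_2(s)\|$ on $[t,T]$, with the history contribution measured in $L^1$. By \Cref{lem:trajectory_bounds}, both trajectories satisfy $x_i(\xi)\in B_{R'}$ and are Lipschitz on $[t,T]$ with constant $\lambda(R)$, where $R'=R(R)$. On $[t-\bar\tau,t]$ they coincide with the histories $w_i$, which are $R$-Lipschitz since $w_i\in V(R)$. Hence each $x_i$ is Lipschitz on the whole interval $[t-\bar\tau,T]$ with constant $K:=\max\{R,\lambda(R)\}$.

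\emph{Pointwise bound.} Writing the integral form of \eqref{eq:FDE} and using the Lipschitz property of $f$, we get
\[
e(s)\le \|z_1-z_2\|+\lambda_f\int_t^s\bigl(e(\xi)+\|x_1(D_1(\xi))-x_2(D_2(\xi))\|\bigr)\,d\xi .
\]
We split the delayed term as
\[
\|x_1(D_1)-x_1(D_2)\|+\|x_1(D_2)-x_2(D_2)\|.
\]
The first piece is at most $K|\tau(\xi,x_1(\xi))-\tau(\xi,x_2(\xi))|\le K\lambda_\tau e(\xi)$.

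The second piece is the main obstacle. When $D_2(\xi)\ge t$, it is bounded by $\max_{[t,\xi]}e$. When $D_2(\xi)<t$, it equals $\|w_1-w_2\|$ evaluated at $D_2(\xi)-t$, which is only controlled in $L^1$ and not pointwise. To handle this, I would integrate in time and change variables $\sigma=D_2(\xi)$ using \Cref{ass:monotone-delay}: $D_2$ is strictly increasing with $D_2'\ge d(R)$. This gives
\[
\int_t^s\|w_1-w_2\|(D_2(\xi)-t)\,\mathbf 1_{\{D_2<t\}}\,d\xi\le d(R)^{-1}\|w_1-w_2\|_1 .
\]
Setting $k(s):=\max_{[t,s]}e$, we then obtain
\[
k(s)\le \|z_1-z_2\|+C\|w_1-w_2\|_1+C\int_t^s k(\xi)\,d\xi ,
\]
and Gronwall's inequality yields $\sup_{[t,T]}e\le C(\|z_1-z_2\|+\|w_1-w_2\|_1)$.

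\emph{Terminal history term.} For the $L^1$ norm of $(x_1)_T-(x_2)_T$, split $[-\bar\tau,0]$ at $\theta=t-T$. On the part where $T+\theta\ge t$, the integrand is bounded by the sup estimate, giving a contribution at most $\bar\tau\sup e$. On the remaining part the integrand is a shifted piece of $w_1-w_2$, so its contribution is at most $\|w_1-w_2\|_1$.

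\emph{Running cost term.} Both trajectories stay in $B_{R'}$, so we can use $\lambda_L(R')$ from \eqref{eq:L_lipschitz}. The cost difference is then bounded by $\lambda_L(R')\int_t^T\bigl(e+\|x_1(D_1)-x_2(D_2)\|\bigr)$. This is exactly the integral already estimated in the pointwise bound, including the change of variables for the delayed history term, so it is bounded by $C(\|z_1-z_2\|+\|w_1-w_2\|_1)$. Collecting the three bounds gives $\lambda_*(R)$.
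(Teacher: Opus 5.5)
Your proof is correct and takes essentially the same route as the paper. You use the Lipschitz bound on $f$, split the delayed term into a $\tau$-Lipschitz piece and a piece where the change of variables along the delayed-time map (via \Cref{ass:monotone-delay}) turns the history contribution into $\|w_1-w_2\|_1$, apply Gronwall, and reuse the same integral bound for $L$ and the terminal history. The differences, evaluating at $D_2$ instead of $D_1$ and using the running maximum $k$ instead of a change of variables for the forward part, are cosmetic.
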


\begin{proof}
By the Lipschitz continuity of \(f\), for every \(\xi\in[t,T]\),
\[
\begin{aligned}
    &\|x_1(\xi)-x_2(\xi)\|\\
    &\le
    \|z_1-z_2\|+
    C(R)\int_t^\xi
    \Bigl(
        \|x_1(s)-x_2(s)\|
        +
        \|x_1(D_1(s))
          -x_2(D_2(s))\|
    \Bigr)\,ds.
\end{aligned}
\]
We estimate the delayed term by writing
\[
\begin{aligned}
&\|x_1(D_1(s))
          -x_2(D_2(s))\| \\
&\le
\|x_1(D_1(s))
          -x_2(D_1(s))\|+
\|x_2(D_1(s))
          -x_2(D_2(s))\|.
\end{aligned}
\]
Using \Cref{ass:monotone-delay} for the change of variables
\(\eta=D_1(s)\), the Lipschitz continuity of \(\tau\), and the
uniform Lipschitz bound on \(x_2\) by \Cref{lem:trajectory_bounds}, we obtain
\[
\begin{aligned}
&\int_t^\xi
\|x_1(D_1(s))
          -x_2(D_2(s))\|\,ds \\
&\le
C(R)\left(\|w_1-w_2\|_1
+\int_t^\xi \|x_1(s)-x_2(s)\|\,ds\right).
\end{aligned}
\]
Consequently,
\[
    \|x_1(\xi)-x_2(\xi)\|
    \le 
    C(R)\left(\|z_1-z_2\|+\|w_1-w_2\|_1+\int_t^\xi \|x_1(s)-x_2(s)\|\,ds\right)
\]
Then Gronwall's inequality yields
\begin{equation}\label{eq:continuous_dependence_state}
    \|x_1(\xi)-x_2(\xi)\|
    \le
    C(R)\bigl(\|z_1-z_2\|+\|w_1-w_2\|_1\bigr),
    \qquad
    \xi\in[t,T].
\end{equation}
The estimate for the difference of the running costs follows from the
local Lipschitz continuity of \(L\) and the same estimate for the delayed term.
Finally,
\[
    \|(x_1)_s-(x_2)_s\|_1
    \le
    \|w_1-w_2\|_1
    +
    (T-t_0)
    \max_{\xi\in[t,s]}\|x_1(\xi)-x_2(\xi)\|,
\]
and the desired estimate follows from
\eqref{eq:continuous_dependence_state}.
\end{proof}

\subsection{Viscosity solutions}

Here, we formulate a Hamilton--Jacobi equation and its notion of viscosity solution.
\begin{definition}[Co-invariant differentiability]\label{Co-invariant differentiability}
Let \(\varphi:\mathbb G\to\mathbb R\). We say that \(\varphi\) is
co-invariantly differentiable at \((t,w)\in \mathbb{G}\), if there exist
\[
    \partial^{ci}_t\varphi(t,w)\in\mathbb R,
    \qquad
    \nabla^{ci}\varphi(t,w)\in\mathbb R^n,
\]
such that, for every \(s\in(t,T]\) and every
\(y\in\Lambda(t,w)\),
\[
    \varphi(s,y_s)-\varphi(t,y_t)
    =
    \partial^{ci}_t\varphi(t,w)(s-t)
    +
    \nabla^{ci}\varphi(t,w)\cdot(y(s)-y(t))
    +
    o(s-t),
\]
where
\[
    \Lambda(t,w)
    :=
    \left\{
        y\in \operatorname{Lip}([t-\bar\tau,\infty);\mathbb R^n):
        y(t+\xi)=w(\xi)
        \text{ for } \xi\in[-\bar\tau,0]
    \right\},
\]
and \(o(\cdot)\) can depend on \(y\). Here \(y_s(\theta)=y(s+\theta)\), \(\theta\in [-\bar \tau, 0]\) for $s\in [t,T]$. 
\end{definition}
\noindent For more details on $ci$-calculus we refer to \cite{MR1783365}. 

Define the Hamiltonian
\[
    H(t,z,y,p)
    =
    \min_{a\in A}
    \bigl\{
        f(t,z,y,a)\cdot p+L(t,z,y,a)
    \bigr\}
\]
for \((t,z,y,p)\in I\times\mathbb R^n\times\mathbb R^n\times\mathbb R^n\). For \(\varphi:\mathbb{G}\to \mathbb{R}\), the Hamilton--Jacobi equation associated with the control problem \eqref{eq:FDE} is
\begin{equation}\label{eq:HJE}
\partial^{ci}_t\varphi(t,w)
+
H\bigl(t,z,w(-\tau(t,z)),\nabla^{ci}\varphi(t,w)\bigr)
=0,\quad t<T,\ z=w(0),
\end{equation}
with the terminal condition
\begin{equation}\label{eq:terminal}
    \varphi(T,w)=g(z,w), \quad w\in  \operatorname{Lip}([-\bar\tau,0];\mathbb R^n).
\end{equation}

Fix any \((t,w)\in\mathbb G\) with \(z=w(0)\). For \(s\in(t,T]\) and
\(x\in\mathbb R^n\), define the polygonal extension \(v\) connecting \((t,z)\) and \((s,x)\) by
\[
    v(\xi)
    :=
    \begin{cases}
        w(\xi-t), & \xi\in[t-\bar\tau,t),\\[1mm]
        z+\dfrac{x-z}{s-t}(\xi-t), & \xi\in[t,s],\\[3mm]
        x, & \xi>s.
    \end{cases}
\]
We denote this singleton by
\[
    \Lambda_0(t,w;s,x):=\{v\},
\]
which is a subset of \(\Lambda(t,w)\).

For \(\varphi:\mathbb G\to\mathbb R\), define the finite-dimensional
projection with base point $(t,w)$ by
\begin{equation}\label{eq:bar_phi}
    \bar\varphi(s,x)
    :=
    \begin{cases}
        \varphi(s,v_s), & (s,x)\in(t,T]\times\mathbb R^n,\\
        \varphi(t,w), & (s,x)=(t,z),
    \end{cases}
\end{equation}
where \(v_s(\theta):=v(s+\theta)\), \(\theta\in[-\bar\tau,0]\), denotes the history of \(v\) at time \(s\).

Let \(\Phi\) be the class of functionals
\(\varphi:\mathbb G\to\mathbb R\) satisfying the following local
Lipschitz condition: for every \(R>0\), there exists
\(\lambda_\varphi(R)>0\) such that
\[
    |\varphi(t_1,w_1)-\varphi(t_2,w_2)|
    \le
    \lambda_\varphi(R)
    \bigl(
        |t_1-t_2|
        +
        \|z_1-z_2\|
        +
        \|w_1-w_2\|_1
    \bigr)
\]
for any  \(t_i\in I\), \(w_i \in V(R)\) with \(z_{i}=w_{i}(0)\), \(i=1,2\).
For \((t,z)\in I\times\mathbb R^n\), $a<b$, and \(C>0\), define the cone
\begin{equation}\label{kone}
    K(a,b,z,C)
    :=
    \left\{
        (s,x)\in [a,b]\times\mathbb R^n:
        \|x-z\|\le C(s-a)
    \right\}.
\end{equation}
\begin{definition}[Viscosity solution]\label{def:viscosity}
A functional \(\varphi\in\Phi\) is a viscosity supersolution of
\eqref{eq:HJE} if for any \((t,w)\in\mathbb G\) with \(z=w(0)\), \(t<T\), \(\phi\in C^1(\mathbb R\times\mathbb R^n)\), $C>C_{f}(1+2\left\Vert w \right\Vert_{\infty})$, and \(\delta>0\),
\[
    \bar\varphi(t,z)-\phi(t,z)
    \le
    \bar\varphi(s,x)-\phi(s,x)
    \qquad
    \forall\,(s,x)\in     K(t,t+\delta,z,C)
\]
implies
\[
    \partial_t\phi(t,z)
    +
    H\bigl(t,z,w(-\tau(t,z)),\nabla\phi(t,z)\bigr)
    \le 0.
\]

Similarly, \(\varphi\in\Phi\) is a viscosity subsolution if \(\varphi\) satisfies
\[
    \bar\varphi(t,z)-\phi(t,z)
    \ge
    \bar\varphi(s,x)-\phi(s,x)
    \qquad
    \forall\,(s,x)\in K(t, t+\delta ,z,C)
\]
implies
\[
    \partial_t\phi(t,z)
    +
    H\bigl(t,z,w(-\tau(t,z)),\nabla\phi(t,z)\bigr)
    \ge 0.
\]
If a functional \(\varphi\) is both a viscosity subsolution and a viscosity
supersolution, and satisfies \eqref{eq:terminal}, then we say \(\varphi\) is a viscosity solution of \eqref{eq:HJE} with the terminal condition \eqref{eq:terminal}.
\end{definition}

\begin{remark}
If \(\rho\) is co-invariantly differentiable at \((t,w)\), then it can be checked that \(\rho\) satisfies \eqref{eq:HJE} at that point in the classical sense by the dynamic programming principle. 
\end{remark}

Let \((t,w)\in \mathbb{G}\) with \(z=w(0)\) and \(\varphi\in\Phi\). For \(\ell>0\) and \(\theta \in\mathbb R^n\), let
\(v\in\Lambda_0(t,w;t+h\ell,z+h\theta)\). Define the upper and lower Dini
derivatives by
\[
    \partial^+\varphi(t,w)(\ell,\theta)
    :=
    \limsup_{h\downarrow0}
    \frac{
        \varphi(t+h\ell,v_{t+h\ell})-\varphi(t,w)
    }{h},
\]
and
\[
    \partial^-\varphi(t,w)(\ell,\theta)
    :=
    \liminf_{h\downarrow0}
    \frac{
        \varphi(t+h\ell,v_{t+h\ell})-\varphi(t,w)
    }{h}.
\]

The delay superdifferential and subdifferential are closed and convex sets defined respectively by
\[
\begin{aligned}
    \partial^{\mathrm{delay}}_+\varphi(t,w)
    :=
    \bigl\{
        (q,p)\in\mathbb R\times\mathbb R^n:
        \partial^+\varphi(t,w)(\ell,\theta)
        \le
        q\ell+p\cdot \theta \\
        \hfill
        \forall\,(\ell,\theta)\in (0,\infty)\times\mathbb R^n
    \bigr\},
\end{aligned}
\]
and
\[
\begin{aligned}
    \partial^{\mathrm{delay}}_-\varphi(t,w)
    :=
    \bigl\{
        (q,p)\in\mathbb R\times\mathbb R^n:
        \partial^-\varphi(t,w)(\ell,\theta)
        \ge
        q\ell+p\cdot \theta \\
        \hfill
        \forall\,(\ell,\theta)\in (0,\infty)\times\mathbb R^n
    \bigr\}.
\end{aligned}
\]

\begin{lemma}\label{lem:value_lipschitz}
The value functional \(\rho\) belongs to \(\Phi\).
\end{lemma}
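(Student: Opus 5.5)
We must show that for each R there is a constant with
\[
|\rho(t_1,w_1)-\rho(t_2,w_2)|\le\lambda_\rho(R)\bigl(|t_1-t_2|+\|z_1-z_2\|+\|w_1-w_2\|_1\bigr)
\]
for $w_i\in V(R)$. The plan is to split the estimate into a history part at equal times and a time part at equal histories, and to use the triangle inequality
\[
|\rho(t_1,w_1)-\rho(t_2,w_2)|\le|\rho(t_1,w_1)-\rho(t_1,w_2)|+|\rho(t_1,w_2)-\rho(t_2,w_2)|.
\]

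\emph{Step 1 (same time, different histories).} Fix $t$ and $w_1,w_2\in V(R)$. For any $\alpha\in\mathcal A_{t,T}$, \Cref{lem:trajectory_bounds} gives $(x_i)_T\in P(R')$ with $R'=R'(R)$. \Cref{lem:continuous_dependence} bounds the running-cost difference and the differences $\|x_1(T)-x_2(T)\|$ and $\|(x_1)_T-(x_2)_T\|_1$ by $\lambda_*(R)(\|z_1-z_2\|+\|w_1-w_2\|_1)$. The local Lipschitz property of $g$ (item 6 of \Cref{ass:standing}), with constant $\lambda_g(R')$, then gives
\[
|J(t,w_1,\alpha)-J(t,w_2,\alpha)|\le C(R)\bigl(\|z_1-z_2\|+\|w_1-w_2\|_1\bigr),
\]
uniformly in $\alpha$. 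Taking the infimum over $\alpha$ on each side transfers this bound to $\rho$.

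\emph{Step 2 (same history, different times).} Take $t_1<t_2$ and $w\in V(R)$; we want $|\rho(t_1,w)-\rho(t_2,w)|\le C(R)(t_2-t_1)$. By \Cref{thm:DPP} with $h=t_2-t_1$,
\[
\rho(t_1,w)=\inf_{\alpha}\Bigl\{\int_{t_1}^{t_2}L\,ds+\rho(t_2,x_{t_2})\Bigr\}.
\]
\begin{itemize}
\item The integral is bounded by $C(R)h$, by the growth bound on $L$ and \Cref{lem:trajectory_bounds}.
\item For the second term we compare $\rho(t_2,x_{t_2})$ with $\rho(t_2,w)$ via Step 1. This needs $x_{t_2}\in V(R'')$: its sup bound follows from \Cref{lem:trajectory_bounds}, and its Lipschitz constant is at most $\max(R,\lambda(R))$.
\item We also need $\|x(t_2)-w(0)\|+\|x_{t_2}-w\|_1\le C(R)h$. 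The first term follows from the Lipschitz bound of \Cref{lem:trajectory_bounds}, the second from \Cref{lem:history_shift}.
\end{itemize}
Combining these gives $|\rho(t_1,w)-\rho(t_2,w)|\le C(R)h$ for every $\alpha$, so both inequalities follow.

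The step needing most care is the bookkeeping of radii. Step 1 must be applied to histories in a larger class $V(R'')$, so its constant must be monotone in $R$, which we may assume after replacing constants by their suprema over radii. We must also check that the time $t_2$ of the shifted history lies in $I$, which it does. Finally, \Cref{lem:continuous_dependence} must hold for initial times anywhere in $I$ with the terminal cost evaluated at $T$; it does, since its constants depend only on $R$. Setting $\lambda_\rho(R)$ to be the maximum of the two constants completes the proof.
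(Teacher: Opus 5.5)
Your proposal is correct and follows essentially the same route as the paper. The paper first proves a fixed-time history estimate via \Cref{lem:continuous_dependence} and the local Lipschitz continuity of $g$, then a time estimate via \Cref{thm:DPP}, \Cref{lem:history_shift} and the growth bound on $L$. Your uniform-in-$\alpha$ bound on $J$ is equivalent to the paper's $\varepsilon$-optimal control argument. Your explicit check that $x_{t_2}$ lies in some $V(R'')$, so that the history estimate applies, fills in a detail the paper leaves implicit.
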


\begin{proof}
Fix \(R>0\), \(t\in I\), and \(w,\hat w\in V(R)\) with \(z=w(0)\) and \(\hat{z} =\hat{w} (0)\). For \(\varepsilon>0\), choose
\(\hat\alpha\in\mathcal A_{t,T}\) such that
\[
    \rho(t,\hat w)+\varepsilon
    \ge
    J(t,\hat w,\hat\alpha).
\]
Let \(x\) and \(\hat x\) be the solutions corresponding to
\((t,w,\hat\alpha)\) and \((t,\hat w,\hat\alpha)\) respectively. By the definition of
\(\rho\),
\[
\begin{aligned}
    \rho(t,w)-\rho(t,\hat w)
    &\le
    \int_t^T
    \Bigl[
    L\bigl(s,x(s),x(s-\tau(s,x(s))),\hat\alpha(s)\bigr) \\
    &\qquad\qquad
    -
    L\bigl(s,\hat x(s),\hat x(s-\tau(s,\hat x(s))),\hat\alpha(s)\bigr)
    \Bigr]\,ds \\
    &\quad
    +
    g(x(T),x_T)-g(\hat x(T),\hat x_T)
    +
    \varepsilon.
\end{aligned}
\]
Using \Cref{lem:continuous_dependence} and the local Lipschitz
continuity of \(g\), we obtain
\[
    \rho(t,w)-\rho(t,\hat w)
    \le
    C(R)\bigl(\|z-\hat z\|+\|w-\hat w\|_1\bigr)
    +
    \varepsilon.
\]
Interchanging \(w\) and \(\hat w\), and then letting
\(\varepsilon\downarrow0\), we derive
\[
    |\rho(t,w)-\rho(t,\hat w)|
    \le
    C(R)\bigl(\|z-\hat z\|+\|w-\hat w\|_1\bigr).
\]

It remains to prove the Lipschitz continuity in time. Let
\(t_1<t_2\). By the dynamic programming principle,
for every \(\varepsilon>0\) there exists \(\alpha\in\mathcal A_{t_1,T}\) such that
the corresponding trajectory \(x=x(\cdot\,|\,t_1,w,\alpha)\) satisfies
\[
\begin{aligned}
    \rho(t_1,w)
    &\le
    \int_{t_1}^{t_2}
    L\bigl(s,x(s),x(s-\tau(s,x(s))),\alpha(s)\bigr)\,ds
    +
    \rho(t_2,x_{t_2})
    \le  \rho(t_1,w)+\varepsilon.
\end{aligned}
\]
Using the previous Lipschitz estimate in the history variable and \Cref{lem:history_shift}, we have
\[
    |\rho(t_2,x_{t_2})-\rho(t_2,w)|
    \le
    C(R)|t_2-t_1|.
\]
Moreover, by \Cref{lem:trajectory_bounds} and the growth assumption on
\(L\),
\[
    \int_{t_1}^{t_2}
    \bigl|
        L\bigl(s,x(s),x(s-\tau(s,x(s))),\alpha(s)\bigr)
    \bigr|\,ds
    \le
    C(R)|t_2-t_1|.
\]
Therefore
\[
  |\rho (t_{2},w(\cdot ))-\rho (t_{1},w(\cdot))|<C|t_1-t_2|+\varepsilon .
\]
Letting \(\varepsilon\downarrow 0\), we obtain
\[
    |\rho(t_1,w)-\rho(t_2,w)|
    \le
    C(R)|t_2-t_1|.
\]
Hence \(\rho\in\Phi\).
\end{proof}

\begin{theorem}\label{thm:value_viscosity}
The value functional \(\rho\) is a viscosity solution of
\eqref{eq:HJE}.
\end{theorem}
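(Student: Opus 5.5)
The proof splits into three parts: membership in \(\Phi\), the terminal condition, and the two viscosity inequalities. Membership in \(\Phi\) is \Cref{lem:value_lipschitz}. The terminal condition \(\rho(T,w)=g(z,w)\) holds by definition, since the integral in \(J\) vanishes for \(t=T\). Both inequalities will come from \Cref{thm:DPP} combined with one approximation estimate, which replaces the true history \(x_{t+h}\) by the polygonal history \(v_{t+h}\) used in the projection \(\bar\rho\).

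\emph{Key approximation step.} Fix \((t,w)\), \(z=w(0)\), and \(C>C_f(1+2\|w\|_\infty)\). By the Remark after \Cref{lem:trajectory_bounds}, there is \(\delta>0\) such that every trajectory \(x=x(\cdot\,|\,t,w,\alpha)\) satisfies \(\|\dot x\|\le C\) on \([t,t+\delta]\), uniformly in \(\alpha\). Hence \((t+h,x(t+h))\in K(t,t+\delta,z,C)\) for \(h\le\delta\). Let \(v\in\Lambda_0(t,w;t+h,x(t+h))\). Then:
\begin{itemize}
\item \(x_{t+h}\) and \(v_{t+h}\) coincide on \([-\bar\tau,-h]\) and at \(0\);
\item on \([-h,0]\) both are \(\max(C,\lambda)\)-Lipschitz with the same endpoints, so \(\|x_{t+h}-v_{t+h}\|_\infty\le 2Ch\) and \(\|x_{t+h}-v_{t+h}\|_1\le 2Ch^2\).
\end{itemize}
Choose \(R\ge\max\{\|w\|_\infty+C\bar\tau,\operatorname{Lip}(w),C,\lambda\}\), so that both histories lie in \(V(R)\). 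Since \(\rho\in\Phi\), this gives
\[
|\rho(t+h,x_{t+h})-\bar\rho(t+h,x(t+h))|\le \lambda_\rho(R)\,2Ch^2 ,
\]
and the bound is uniform in \(\alpha\). This uniformity is the step I expect to be the main obstacle: the \(o(h)\) errors must not depend on the control. It is also exactly where the polygonal (rather than constant) extension matters, because the \(L^1\) discrepancy is \(O(h^2)\).

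\emph{Subsolution.} Suppose \(\bar\rho-\phi\) attains its maximum at \((t,z)\) over the cone. Fix \(a\in A\) and use the constant control \(\alpha\equiv a\). The DPP inequality \(\rho(t,w)\le\int_t^{t+h}L\,ds+\rho(t+h,x_{t+h})\), together with the approximation step and the maximality condition \(\bar\rho(t+h,x(t+h))-\rho(t,w)\le \phi(t+h,x(t+h))-\phi(t,z)\), yields
\[
0\le\int_t^{t+h}\bigl[L(s,x(s),x(D(s)),a)+\partial_t\phi+\nabla\phi\cdot f(s,x(s),x(D(s)),a)\bigr]ds+O(h^2).
\]
As \(s\downarrow t\) we have \(x(s)\to z\) and \(D(s)\to-\tau(t,z)+t\), so \(x(D(s))\to w(-\tau(t,z))\) by continuity of \(\tau\) and Lipschitz bounds. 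Continuity of \(f\), \(L\) and \(\nabla\phi\) then gives, after dividing by \(h\) and letting \(h\downarrow0\),
\[
0\le\partial_t\phi(t,z)+f\cdot\nabla\phi(t,z)+L
\]
evaluated at \((t,z,w(-\tau(t,z)),a)\). Taking the minimum over \(a\in A\) gives \(\partial_t\phi+H\ge0\).

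\emph{Supersolution.} Suppose \(\bar\rho-\phi\) attains its minimum at \((t,z)\) over the cone. For each \(h\), use the DPP to choose \(\alpha_h\) with
\[
\rho(t,w)+h^2\ge\int_t^{t+h}L\,ds+\rho(t+h,x^h_{t+h}).
\]
Combining this with the uniform approximation step and the minimality condition gives
\[
0\ge\int_t^{t+h}\bigl[L+\partial_t\phi+\nabla\phi\cdot f\bigr](s,x^h(s),x^h(D_h(s)),\alpha_h(s))\,ds-O(h^2).
\]
By \Cref{lem:trajectory_bounds}, the trajectories \(x^h\) are equi-Lipschitz, so \(x^h(s)\to z\) and \(x^h(D_h(s))\to w(-\tau(t,z))\) uniformly in \(h\) and \(s\in[t,t+h]\). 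Uniform continuity of \(f\), \(L\) and \(\nabla\phi\) on compacts (\(A\) is compact) lets me freeze the arguments at \((t,z,w(-\tau(t,z)))\) with an \(o(1)\) error in the integrand. Bounding the integrand from below pointwise by \(\partial_t\phi(t,z)+H(t,z,w(-\tau(t,z)),\nabla\phi(t,z))-o(1)\), dividing by \(h\) and letting \(h\downarrow0\) yields \(\partial_t\phi+H\le0\). This argument requires no convexity and no existence of optimal controls.
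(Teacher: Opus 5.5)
Your proof is correct and follows the same route as the paper. Both arguments combine the dynamic programming principle, the uniform-in-$\alpha$ estimate $\|x_{t+h}-v_{t+h}\|_1=O(h^2)$ between the true and polygonal histories, and the $\Phi$-Lipschitz bound on $\rho$ from \Cref{lem:value_lipschitz}; the differences are cosmetic (you argue directly rather than by contradiction, and you freeze the delayed argument at $w(-\tau(t,z))$ by equicontinuity instead of passing through $v^{[s]}(s-\tau(s,x(s)))$), and you should just take $h$ below both the $\delta$ from the Remark and the $\delta$ given in the definition of viscosity solution, which you conflated.
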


\begin{proof}
The terminal condition follows immediately from the definition of the
value functional. We prove the supersolution property; the subsolution
property is analogous.

Suppose that \(\rho\) is not a viscosity supersolution. Then there exist \((t_*,w_*)\in\mathbb G\) with \(z_{*}=w_{*}(0)\), \(t_*<T\), \(C_{*}>C_{f}(1+2\left\Vert w_{*}\right\Vert_{\infty})\) and a test function
\(\psi\in C^1(\mathbb R\times\mathbb R^n)\), such that
\[
    \bar\rho(t_*,z_*)-\psi(t_*,z_*)
    \le
    \bar\rho(s,x)-\psi(s,x)
\]
for all \((s,x)\in K(t_{*},t_{*}+\delta _{0},z_{*},C_{*})\), while
\[
    \partial_t\psi(t_*,z_*)
    +
    H\bigl(t_*,z_*,w_*(-\tau(t_*,z_*)),\nabla_{z}\psi(t_*,z_*)\bigr)
    >0.
\]
By the definition of \(H\), and by continuity, we may choose
\(\delta_0>0\) smaller and find \(\theta>0\) such that
\begin{equation}\label{eq:strict_ineq_test}
    \partial_t\psi(s,x)
    +
    \nabla_{z}\psi(s,x)\cdot
    f\bigl(s,x,v^{s,x}_s(-\tau(s,x)),a\bigr)+
    L\bigl(s,x,v^{s,x}_s(-\tau(s,x)),a\bigr)
    \ge
    \theta
\end{equation}
for all \((s,x)\in K(t_{*},t_{*}+\delta _{0},z_{*},C_{*})\) and \(a\in A\), where \(v^{s,x}\in \Lambda_0(t_*,w_*;s,x)\).

By \Cref{lem:trajectory_bounds}, every trajectory starting from \((t_*,w_*)\) satisfies
\[
    \|x(s)-z_*\|
    \le
    C_{*} |s-t_*|
\]
for \(s \in [t,t_{*}+h]\) for small $h<\delta _{0}$. Then \((s,x(s))\in K(t_{*},t_{*}+\delta _{0},z_{*},C_{*})\) for
\(s\in[t_*,t_*+h]\).

By \Cref{thm:DPP}, there exists
\(\alpha\in\mathcal A_{t_*,T}\) such that
\begin{equation}\label{eq:DPP_subsolution}
\begin{aligned}
    \rho(t_*,w_*)
    \ge
    \int_{t_*}^{t_*+h}
    L\bigl(s,x(s),x(s-\tau(s,x(s))),\alpha(s)\bigr)\,ds
    +
    \rho(t_*+h,x_{t_*+h})
    -
    \frac{\theta h}{2},
\end{aligned}
\end{equation}
where \(x=x(\cdot;t_*,w_*,\alpha)\). Denote \(v^{[s]}=v^{s,x(s)}\) for \(s\in (t_*,t_*+h]\). Since \(x\) is
Lipschitz, we have
\begin{equation}\label{eq:x-v}
\|x(s)-v^{[s]}(s)\|\le Ch
\end{equation}
for all \(s\in (t_*,t_*+h]\). Thus
\[
\begin{aligned}
    \|x_{t_*+h}-v^{[t_{*}+h]}_{t_*+h}\|_1
    &\le Ch^2.
\end{aligned}
\]
Using the Lipschitz continuity of \(\rho\), \eqref{eq:DPP_subsolution} gives
\begin{equation}\label{eq:DPP_polygon}
\begin{aligned}
    \rho(t_*+h,v^{[t_{*}+h]}_{t_*+h})-\rho(t_*,w_*)
    \le
    -
    \int_{t_*}^{t_*+h}
    L\bigl(s,x(s),x(s-\tau(s,x(s))),\alpha(s)\bigr)\,ds
    +
    \frac{\theta h}{2}
    +
    Ch^2 .
\end{aligned}
\end{equation}
On the other hand, the touching condition implies
\[
    \rho(t_*+h,v^{[t_{*}+h]}_{t_*+h})-\rho(t_*,w_*)
    \ge
    \psi(t_*+h,x(t_*+h))-\psi(t_*,z_*).
\]
Since \(x\) solves \eqref{eq:FDE},
\[
\begin{aligned}
    \psi(t_*+h,x(t_*+h))-\psi(t_*,z_*)
    =
    \int_{t_*}^{t_*+h}
    \Bigl[
        \partial_t\psi(s,x(s))
        +
        \nabla_z\psi(s,x(s))\cdot\dot x(s)
    \Bigr]\,ds.
\end{aligned}
\]
Combining this with \eqref{eq:DPP_polygon}, we obtain
\[
\begin{aligned}
    \frac{\theta h}{2}+Ch^2
    \ge
    \int_{t_*}^{t_*+h}
    \Bigl[
        \partial_t\psi(s,x(s))
        +
        \nabla_z\psi(s,x(s))\cdot
        f(s,x(s),x(s-\tau(s,x(s))),\alpha(s)) \\
        \qquad\qquad
        +
        L(s,x(s),x(s-\tau(s,x(s))),\alpha(s))
    \Bigr]\,ds.
\end{aligned}
\]
Again, \eqref{eq:x-v} implies that
\[
\begin{aligned}
    \frac{\theta h}{2}+Ch^2
    \ge
    \int_{t_*}^{t_*+h}
    \Bigl[
        \partial_t\psi(s,x(s))
        +
        \nabla_z\psi(s,x(s))\cdot
        f(s,x(s),v^{[s]}(s-\tau(s,x(s))),\alpha(s)) \\
        \qquad\qquad
        +
        L(s,x(s),v^{[s]}(s-\tau(s,x(s))),\alpha(s))
    \Bigr]\,ds.
\end{aligned}
\]
Therefore, by \eqref{eq:strict_ineq_test},
\[
    \frac{\theta h}{2}+Ch^2
    \ge
    \theta h.
\]
This is impossible for \(h>0\) sufficiently small. Hence \(\rho\) is a
viscosity subsolution.

The proof of the subsolution property is obtained by reversing the
touching inequality and using an \(\varepsilon\)-optimal control in the
opposite direction, with using a constant control $a^{*}\in A$ instead arbitrary control $a$ in \eqref{eq:strict_ineq_test}. Thus \(\rho\) is a viscosity solution of
\eqref{eq:HJE} with the terminal condition \eqref{eq:terminal}.
\end{proof}

\section{Uniqueness of viscosity solutions}\label{sec:unique_viscosity}

We shall prove the uniqueness of the viscosity solution. The proof strategy is from \cite[Lemma 7.6]{MR4264642}, adapted to polygonal extensions.

\begin{lemma}\label{lem:H_lipschitz_p}
For every \(t\in I\) and \(z,y,p,p'\in\mathbb R^n\), we have
\[
    |H(t,z,y,p)-H(t,z,y,p')|
    \le
    C_{f}(1+\|z\|+\|y\|)\|p-p'\|.
\]
\end{lemma}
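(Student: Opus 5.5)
The estimate is the standard fact that a minimum of affine functions of \(p\) is Lipschitz in \(p\), with constant equal to the largest slope. Since \(A\) is compact and \(a\mapsto f(t,z,y,a)\cdot p+L(t,z,y,a)\) is continuous by \Cref{ass:standing}(1), the minimum defining \(H(t,z,y,p)\) is attained. Fix \(t,z,y\) and pick \(a'\in A\) attaining the minimum for \(p'\), so that \(H(t,z,y,p')=f(t,z,y,a')\cdot p'+L(t,z,y,a')\).

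The one-sided bound comes from testing the minimum for \(p\) with the same control \(a'\):
\[
H(t,z,y,p)-H(t,z,y,p')\le f(t,z,y,a')\cdot p+L(t,z,y,a')-f(t,z,y,a')\cdot p'-L(t,z,y,a')= f(t,z,y,a')\cdot(p-p').
\]
The \(L\) terms cancel exactly. By Cauchy--Schwarz and the growth bound in \Cref{ass:standing}(2), the right-hand side is at most \(C_f(1+\|z\|+\|y\|)\|p-p'\|\).

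Interchanging the roles of \(p\) and \(p'\), now using a minimizer \(a\) for \(p\), gives the same bound for \(H(t,z,y,p')-H(t,z,y,p)\). Together the two inequalities yield the absolute-value estimate. No step is a real obstacle. The only point needing care is that the minimum is attained, which follows from compactness of \(A\) and continuity in \(a\); alternatively, one can work with \(\varepsilon\)-minimizers and let \(\varepsilon\downarrow0\).
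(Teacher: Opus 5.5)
Your proof is correct and matches the paper's argument: you test the minimization for \(p\) with a minimizer \(a'\) for \(p'\), bound \(f(t,z,y,a')\cdot(p-p')\) using the growth condition in \Cref{ass:standing}, and then swap roles. Your remark that the minimum is attained, by compactness of \(A\) and continuity in \(a\), makes explicit a point the paper uses without comment.
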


\begin{proof}
Let \(a'\in A\) be a minimizer for \(H(t,z,y,p')\). Then
\[
\begin{aligned}
    H(t,z,y,p)-H(t,z,y,p')
    &\le
    f(t,z,y,a')\cdot(p-p')  \\
    &\le
    C_{f}(1+\|z\|+\|y\|)\|p-p'\|.
\end{aligned}
\]
Exchanging the roles of \(p\) and \(p'\) gives the reverse estimate.
\end{proof}

Let \((t_{*},w_{*})\in\mathbb G\) and $t_{*}<T_{*}\leq T$ with \(z_{*}=w_{*}(0)\), and let \(C_*>0\). Set $K:=K(t_{*},T_{*},z_{*},C_*)$, the cone defined in \eqref{kone}.
\begin{lemma}\label{lem:continuous_polygon_extension}
Let \(u^{r,y}\in \Lambda_0(t_{*},w_{*};r,y)\) and \(v^{s,x}\in \Lambda_0(t_{*},w_{*};s,x)\) for \(r,s>t\) and \((r,y),(s,x)\in K\). Then, there exists \(C=C(\operatorname{Lip}(w_{*}),C_*)\) such that
\[
    \|v^{s,x}_s-w_{*}\|_1
    \le C(s-t)
\]
and
\begin{equation}\label{second_estimate}
    \|u^{r,y}_r-v^{s,x}_s\|_1
    \le C(|r-s|+\|y-x\|).
\end{equation}
Moreover, there exists \(C=C(\left\Vert z_{*} \right\Vert ,\operatorname{Lip}(w_{*}),C_*)>0\) such that
\begin{equation}\label{final_estimate}
\left\|
u ^{r,y}_r\bigl(-\tau(r,y)\bigr)
-
v ^{s,x}_s\bigl(-\tau(s,x)\bigr)
\right\| \le
C\bigl(|r-s|+\|y-x\|\bigr).
\end{equation}
\end{lemma}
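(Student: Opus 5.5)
All three estimates will be proved by working directly with the explicit form of the polygonal extensions and comparing them pointwise on $[-\bar\tau,0]$. Throughout, $t$ means $t_*$, $\lambda_w:=\operatorname{Lip}(w_*)$, and $(s,x)\in K$ gives $\|x-z_*\|\le C_*(s-t_*)$. The slope of the linear piece of $v^{s,x}$ is $(x-z_*)/(s-t_*)$, so it has norm at most $C_*$. Hence $v^{s,x}$ is Lipschitz on $[t_*-\bar\tau,\infty)$ with constant $M:=\max\{\lambda_w,C_*\}$, uniformly in $(s,x)\in K$. This uniform Lipschitz bound is the only structural fact used repeatedly.

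\emph{First estimate.} I write $v^{s,x}_s(\theta)-w_*(\theta)=v^{s,x}(s+\theta)-v^{s,x}(t_*+\theta)$, since $v^{s,x}(t_*+\theta)=w_*(\theta)$. By the uniform Lipschitz bound this is at most $M(s-t_*)$ pointwise. Integrating over $[-\bar\tau,0]$ gives the $L^1$ bound with $C=\bar\tau M$, which is in fact an $L^\infty$ bound.

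\emph{Second estimate \eqref{second_estimate}.} Assume without loss of generality $t_*<r\le s$ and split
\[
u^{r,y}_r-v^{s,x}_s=\bigl(u^{r,y}_r-v^{s,x}_r\bigr)+\bigl(v^{s,x}_r-v^{s,x}_s\bigr).
\]
The second term is at most $M|r-s|$ pointwise by the Lipschitz bound. For the first term, the two functions agree on $[t_*-\bar\tau,t_*]$, so I only need to compare them on $[t_*,r]$.
\begin{itemize}
\item Both are affine there and agree at $t_*$.
\item At $\xi=r$ the difference is $\|y-v^{s,x}(r)\|$.
\item Since $v^{s,x}(s)=x$, the Lipschitz bound gives $\|y-v^{s,x}(r)\|\le\|y-x\|+\|x-v^{s,x}(r)\|\le\|y-x\|+C_*|s-r|$.
\end{itemize}
As the difference of two affine functions vanishing at $t_*$, it is bounded on $[t_*,r]$ by its value at $r$. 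So the first term is at most $\|y-x\|+C_*|r-s|$ in sup norm, and the $L^1$ bound follows with a factor $\bar\tau$. This actually yields an $L^\infty$ estimate, which I reuse below.

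\emph{Third estimate \eqref{final_estimate}.} I split
\[
\|u^{r,y}_r(-\tau(r,y))-v^{s,x}_s(-\tau(s,x))\|\le\|u^{r,y}_r-v^{s,x}_s\|_\infty+\|v^{s,x}_s(-\tau(r,y))-v^{s,x}_s(-\tau(s,x))\|.
\]
The first term is controlled by the $L^\infty$ version of \eqref{second_estimate} just obtained. The second term is at most $M|\tau(r,y)-\tau(s,x)|$. Then
\[
|\tau(r,y)-\tau(s,x)|\le|\tau(r,y)-\tau(s,y)|+\lambda_\tau\|y-x\|.
\]
Lipschitz continuity of $\tau$ in time is not assumed directly. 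However, $\tau$ is $C^1$, and $y$ ranges in the compact set $B_{\|z_*\|+C_*(T-t_*)}$ because $(r,y)\in K$. So $|\tau_t|$ is bounded there by a constant depending on $\|z_*\|$ and $C_*$. This is exactly why the constant in \eqref{final_estimate} depends on $\|z_*\|$.

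I expect the only delicate point to be this last step: justifying the time-Lipschitz bound on $\tau$ via compactness of $K$. The rest is elementary bookkeeping with the explicit affine pieces.
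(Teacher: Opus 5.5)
Your argument is correct. Its skeleton matches the paper's: you compare the polygonal extensions pointwise using their explicit form, and for \eqref{final_estimate} you use the same triangle-inequality split through $v^{s,x}_s(-\tau(r,y))$, with the first piece handled by a pointwise (hence $L^\infty$) bound. Where you differ is the bookkeeping for the first two estimates.

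The paper assumes $r\ge s$ and $r,s<t_*+\bar\tau$, and splits $[-\bar\tau,0]$ into three regions: both histories in the $w_*$-part; one in the $w_*$-part and one on its linear segment; both on linear segments. On the last region it uses an explicit algebraic identity to obtain $2\|y-x\|+C_*(r-s)$.

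You instead note once that every $v^{s,x}$ with $(s,x)\in K$ is $\max\{\operatorname{Lip}(w_*),C_*\}$-Lipschitz. For \eqref{second_estimate} you then insert the intermediate history $v^{s,x}_r$. The error splits into a pure time-shift term and a comparison of two affine segments starting from the same point $z_*$ at time $t_*$, whose difference is largest at $\xi=r$. This removes the case analysis, including the reduction to $r,s<t_*+\bar\tau$. It also gives directly the $L^\infty$ bound that the paper uses implicitly when it says the first term in the third estimate ``is exactly'' \eqref{eq:ur_vs}.

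Your last step is more explicit than the paper's. The standing assumptions make $\tau$ Lipschitz only in $z$, so the time increment has to be controlled by a bound on $\tau_t$ over the compact set $I\times B_{\|z_*\|+C_*(T-t_*)}$. That is exactly where the dependence on $\|z_*\|$ enters, whereas the paper simply invokes ``the Lipschitz continuity of $\tau$''.

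In both versions the constants also depend on $\bar\tau$, through the passage from $L^\infty$ to $L^1$; this is harmless since $\bar\tau$ is fixed.
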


\begin{proof}
It suffices to prove the estimates in the case \( r,s < t_{*}+\bar{\tau}, \) since the other cases follow by the same argument with fewer contributions from the initial history.

We first estimate \(\|v^{s,x}_s-w_{*}\|_1\). To do so, we write
\[
\begin{aligned}
    \|v^{s,x}_s-w_{*}\|_1=
    \int_{-\bar\tau}^0 \|v^{s,x}(s+\xi)-w_{*}(\xi)\|\,d\xi =:\int_{-\bar\tau}^0 \sigma(\xi)\,d\xi.
\end{aligned}
\]
Then, we get
\[
\sigma(\xi)=\|w_{*}(s-t_{*}+\xi)-w_{*}(\xi)\|\leq \operatorname{Lip}(w_{*})(s-t_{*})
\]
when \(\xi \in [-\bar\tau,t_{*}-s]\) and 
\[
\begin{aligned}
\sigma(\xi)=\left\|
        z_{*}+\frac{x-z_{*}}{s-t_{*}}(s-t_{*}+\xi)-w_{*}(\xi)
    \right\|&\le
\left\|
        w_{*}(0)
        -
        w_{*}(\xi)
\right\|
    +\frac{\|x-z_{*}\|}{s-t_{*}}|s-t_{*}+\xi|\\
    &\le \operatorname{Lip}(w_{*})|\xi|+C_*|s-t_{*}+\xi|\\
    &\le (\operatorname{Lip}(w_{*})+C_*)(s-t_{*})
\end{aligned}
\]
when \(\xi \in [t_{*}-s,0]\). Thus, we obtain
\[
    \|v^{s,x}_s-w_{*}\|_1 \le C(s-t)
\]
for \(C=C(\operatorname{Lip}(w_{*}),C_*)\).

We now prove \eqref{second_estimate}. Assume first that \(r\ge s\). Then, we have
\begin{equation*}
    \|u_r-v_s\|_1 =
    \int_{-\bar\tau}^0 \|u ^{r,y}(r+\xi)-v^{s,x}(s+\xi)\|\,d\xi =:
    \int_{-\bar\tau}^0 \gamma(\xi)\,d\xi,
\end{equation*}
where
\begin{equation}\label{eq:ur_vs}
\gamma(\xi)=
\begin{cases}
\|w_{*}(r-t_{*}+\xi)-w_{*}(s-t_{*}+\xi)\|, 
& \xi\in[-\bar{\tau},\,t_{*}],\\[3pt]
\left\|
        z_{*}+\dfrac{y-z_{*}}{r-t_{*}}(r-t_{*}+\xi)
        -
        w_{*}(s-t_{*}+\xi)
    \right\|,& \xi \in [t_{*}-r,t_{*}-s],\\[8pt]
\left\|
        \dfrac{y-z_{*}}{r-t_{*}}(r-t_{*}+\xi)
        -\dfrac{x-z_{*}}{s-t_{*}}(s-t_{*}+\xi)
    \right\|,
& \xi\in[t_{*}-s,\,0].
\end{cases}
\end{equation}
As for the first estimate, we have
\[
  \gamma(\xi)\le \operatorname{Lip}(w_{*}) (r-s)
\]
for \(\xi\in[-\bar{\tau},\,t_{*}-r]\) and
\[
 \gamma(\xi)\le \operatorname{Lip}(w_{*})|s-t_{*}+\xi|+C_*|r-t_{*}+\xi|\le (\operatorname{Lip}(w_{*})+C_*)(r-s)
\]
for \(\xi \in [t_{*}-r,t_{*}-s]\).

For \(\xi\in[t_{*}-s,\,0]\), observe that
\[
\begin{aligned} \frac{(y-z_{*})(r-t_{*}+\xi)}{r-t_{*}}
-
\frac{(x-z_{*})(s-t_{*}+\xi)}{s-t_{*}}  =
(y-x)
+
\frac{y-x}{r-t_{*}}\xi
-
\frac{(x-z_{*})(r-s)}{(s-t_{*})(r-t_{*})}\xi .
\end{aligned}
\]
Since \(|\xi|\le s-t_{*}\le r-t_{*}\), it follows that
\[
\begin{aligned}
\gamma(\xi)\le \|y-x\|+\frac{\|y-x\|}{r-t_{*}}|\xi|+C_*\frac{r-s}{r-t_{*}}|\xi|\le 2\|y-x\|+C_*(r-s).
\end{aligned}
\]
Combining the above estimates, we deduce that
\[
    \|u ^{r,y}_r-v^{s,x}_s\|_1
    \le C(r-s+\|y-x\|)
\]
for \(C=C(\operatorname{Lip}(w),C_*,\bar{\tau})\). The case \(r<s\) follows by changing the roles of \((r,y)\) and \((s,x)\).

To conclude, we show \eqref{final_estimate}. For this, we write
\[
\begin{aligned}
&\|u ^{r,y}_r(-\tau(r,y))-v^{s,x}_s(-\tau(s,x))\| \\
&\le
\|u ^{r,y}_r(-\tau(r,y))-v^{s,x}_s(-\tau
(r,y))\|
+
\|v^{s,x}_s(-\tau
(r,y))-v^{s,x}_s(-\tau(s,x))\|.
\end{aligned}
\]
The first term on the right-hand side is exactly the estimate \eqref{eq:ur_vs}. Hence,
\[
\|u ^{r,y}_r(-\tau(r,y))-v^{s,x}_s(-\tau
(r,y))\|\le C(|r-s|+\|y-x\|),
\]
where $C=C(\operatorname{Lip}(w_{*}),C_{*})$. By the Lipschitz continuity of \(v\), \(w_*\) and \(\tau\), the second term is also controlled by $(|r-s|+\|y-x\|)$ with a constant $C$ depending additionally on $\left\Vert z _{*}\right\Vert$.
\end{proof}

\begin{lemma}\label{lem:bar_phi_continuous}
Let $\varphi \in \Phi$, and \(\bar\varphi\) be defined as in \eqref{eq:bar_phi} with base point $(t_{*},w_{*})$. Then \(\bar\varphi\) is continuous on \(K\), the cone defined in \eqref{kone}.
\end{lemma}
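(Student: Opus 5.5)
The plan is to reduce continuity of \(\bar\varphi\) on \(K\) to the local Lipschitz property defining \(\Phi\), combined with the estimates of \Cref{lem:continuous_polygon_extension}. The key point is that every history \(v^{s,x}_s\) with \((s,x)\in K\) lies in a single set \(V(R)\), with \(R\) independent of \((s,x)\). Once that uniform bound is available, \(\varphi\in\Phi\) gives
\[
|\bar\varphi(r,y)-\bar\varphi(s,x)|\le \lambda_\varphi(R)\bigl(|r-s|+\|y-x\|+\|u^{r,y}_r-v^{s,x}_s\|_1\bigr).
\]
Here we use that \(u^{r,y}_r(0)=y\) and \(v^{s,x}_s(0)=x\). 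Estimate \eqref{second_estimate} then yields \(|\bar\varphi(r,y)-\bar\varphi(s,x)|\le C(|r-s|+\|y-x\|)\) for \(r,s>t_*\), which is in fact Lipschitz continuity on \(K\setminus\{(t_*,z_*)\}\).

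The first step is the uniform bound. For \((s,x)\in K\) with \(s>t_*\), the function \(v^{s,x}\) coincides with \(w_*\) on \([t_*-\bar\tau,t_*)\). On \([t_*,s]\) it is affine with slope \((x-z_*)/(s-t_*)\), whose norm is at most \(C_*\) by the cone condition. Hence
\[
\|\dot v^{s,x}_s\|_\infty\le \max\{\operatorname{Lip}(w_*),C_*\},\qquad \|v^{s,x}_s\|_\infty\le \|w_*\|_\infty+C_*(T_*-t_*).
\]
So all these histories, together with \(w_*\) itself, lie in \(V(R)\) for
\[
R:=\max\{\operatorname{Lip}(w_*),\,C_*,\,\|w_*\|_\infty+C_*(T-t_0)\}.
\]

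The second step, which is the only delicate one, is continuity at the vertex \((t_*,z_*)\). There \(\bar\varphi\) is defined separately as \(\varphi(t_*,w_*)\), and the polygonal extension degenerates. Take \((s,x)\in K\) with \(s\downarrow t_*\). The cone condition forces \(\|x-z_*\|\le C_*(s-t_*)\to0\). Applying the \(\Phi\)-estimate with \(w_1=v^{s,x}_s\) and \(w_2=w_*\), both in \(V(R)\), gives
\[
|\bar\varphi(s,x)-\bar\varphi(t_*,z_*)|\le \lambda_\varphi(R)\bigl(|s-t_*|+\|x-z_*\|+\|v^{s,x}_s-w_*\|_1\bigr).
\]
By the first estimate of \Cref{lem:continuous_polygon_extension}, the last term is at most \(C(s-t_*)\). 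The whole right-hand side is therefore bounded by \(C(1+C_*)(s-t_*)\to0\), which proves continuity at the vertex.

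Combining the two steps gives continuity on all of \(K\). The only real obstacle is the vertex, where the difference quotient defining the polygon is singular. The cone structure of \(K\) is exactly what keeps both the slopes and the \(L^1\) distances controlled there.
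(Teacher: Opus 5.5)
Your proposal is correct and is essentially the paper's proof. Both arguments get continuity at the vertex $(t_*,z_*)$ from the first estimate of the polygonal-extension lemma together with the cone condition $\|x-z_*\|\le C_*(s-t_*)$, and continuity elsewhere from the local Lipschitz property defining $\Phi$ combined with \eqref{second_estimate}. The one addition is your explicit check that every $v^{s,x}_s$ with $(s,x)\in K$, together with $w_*$, lies in a single $V(R)$; the paper leaves this implicit when it invokes $\lambda_\varphi$, and it is precisely what justifies that step.
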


\begin{proof}
Let \((s,x)\to (r,y)\) in \(K\). We first consider the case
\((r,y)=(t_{*},z_{*})\). 

Let \(v^{s,x}\in\Lambda_0(t_{*},w_{*};s,x)\). By the local
Lipschitz continuity of \(\varphi\), we have
\[
\begin{aligned}
    |\bar\varphi(s,x)-\bar\varphi(t_{*},z_{*})|
    &=
    |\varphi(s,v_s)-\varphi(t_{*},w_{*})|  \\
    &\le
    \lambda_\varphi
    \bigl(
        |s-t_{*}|+\|x-z_{*}\|+\|v_s-w\|_1
    \bigr).
\end{aligned}
\]
Since \((s,x)\in K\), we have \(\|x-z_{*}\|\le C_*(s-t_{*})\), and \Cref{lem:continuous_polygon_extension} gives
\[
    \|v^{s,x}_s-w\|_1\le C(s-t_{*}).
\]
Hence \(\bar\varphi(s,x)\to\bar\varphi(t_{*},z_{*})\).

For a general point \((r,y)\in K\), set \(u ^{r,y}\in\Lambda_0(t_{*},w_{*};r,y)\). Again by the local Lipschitz continuity of \(\varphi\), we deduce
\[
    |\bar\varphi(s,x)-\bar\varphi(r,y)|
    \le
    \lambda_\varphi
    \bigl(
        |s-r|+\|x-y\|+\|v^{s,x}_s-u^{r,y}_r\|_1
    \bigr).
\]
Then, \Cref{lem:continuous_polygon_extension} implies the continuity of \(\bar\varphi\) on \(K\).
\end{proof}

 Let \(\psi_1\in\Phi\) be a viscosity subsolution and \(\psi_2\in\Phi\) be a viscosity supersolution of \eqref{eq:HJE}. Set
\[
    h_{*}:=T_{*}-t_{*}<1/(2C_{f}),\qquad m_*:=\|w_*\|_\infty,
    \qquad
    C_*:=
    \frac{1+2m_*}{1/C_{f}-2h_*},
\]
where $C_f$ is taken from \Cref{ass:standing}. With the base point $(t_{*},w_{*})$, all polygonal extensions in the cone $K$ (defined in \eqref{kone}) belong to $V(R)$ for some $R=R(m_{*},C_{*},h_{*})$. Set 
\[
  \lambda _{*}=\max_{}\left\{\lambda _{\psi _{1}}(R),\lambda _{\psi _{2}}(R)\right\} ,\quad \theta =2\lambda _{*}C_{*}h_{*},\quad \eta =2\theta.
\]

\textcolor{red}{We next establish a comparison estimate on a sufficiently short time interval. A preliminary difficulty arises from the fact that the doubling-of-variables argument requires changing the base point of the polygonal projections. \Cref{lem:rebase} shows that the error produced by this change of base point is controlled by the corresponding time increment. 
Let \(\psi_1\in\Phi\) be a viscosity subsolution and \(\psi_2\in\Phi\) a viscosity supersolution of \eqref{eq:HJE}. Fix \(t_*<T_*\) such that \[ h_*:=T_*-t_*<\frac{1}{2C_f}, \] and let \[ m_*:=\|w_*\|_\infty, \qquad C_*:= \frac{1+2m_*}{1/C_f-2h_*}, \] where \(C_f\) is the constant appearing in \Cref{ass:standing}. With \((t_*,w_*)\) as the base point, all polygonal extensions associated with points in the cone \[ K:=K(t_*,T_*;w_*(0),C_*) \] belong to \(V(R)\) for some constant \[ R=R(m_*,C_*,h_*). \] We then set \[ \lambda_*:= \max\left\{ \lambda_{\psi_1}(R), \lambda_{\psi_2}(R) \right\}, \qquad \theta:=2\lambda_*C_*h_*, \qquad \eta:=2\theta. \] Unless otherwise specified, throughout the following argument \(K\) denotes the cone \(K(t_*,T_*;w_*(0),C_*)\) defined above.}

\begin{lemma}[Rebase estimate]\label{lem:rebase}
Let
\((r,y),(s,x)\in K\) with $r<s$ and $\left\Vert x-y \right\Vert \leq C_{*}(s-r)$. Set
\[
    u ^{r,y}\in \Lambda_0(t_*,w_*;r,y),
    \qquad
    v^{s,x}\in \Lambda_0(t_*,w_*;s,x).
\]
Further, let $q ^{s,x}\in \Lambda _{0}(r,u_{r}; s,x)$. Then, we have
\[
  \left|\psi _{i}(s,q^{s,x}_{s})-\psi _{i}(s,v^{s,x}_{s})\right|\leq \theta (s-r), \quad i=1,2.
\]
\end{lemma}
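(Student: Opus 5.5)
The plan is to reduce the claim to an $L^1$ estimate between the two histories $q^{s,x}_s$ and $v^{s,x}_s$, and then use the local Lipschitz property defining the class $\Phi$.

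\textbf{Step 1: both histories lie in $V(R)$, so the $\Phi$-estimate applies.} Both $q^{s,x}$ and $v^{s,x}$ end at the same point $x$ at time $s$, so $q^{s,x}_s(0)=v^{s,x}_s(0)=x$. Hence, for $i=1,2$,
\[
|\psi_i(s,q^{s,x}_s)-\psi_i(s,v^{s,x}_s)|\le \lambda_*\,\|q^{s,x}_s-v^{s,x}_s\|_1 .
\]
For this we need $q^{s,x}_s\in V(R)$. We argue as follows.
\begin{itemize}
\item On $[t_*-\bar\tau,r]$ the function $q^{s,x}$ coincides with $u^{r,y}$, a polygonal extension from the cone.
\item On $[r,s]$ it is the segment from $y$ to $x$, whose slope is at most $C_*$ because $\|x-y\|\le C_*(s-r)$.
\item Its values stay between $y$ and $x$, both of which are bounded in terms of $m_*,C_*,h_*$.
\end{itemize}
So the same choice of $R=R(m_*,C_*,h_*)$ covers $q^{s,x}$, possibly after enlarging $R$ harmlessly. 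It therefore suffices to show
\[
\|q^{s,x}_s-v^{s,x}_s\|_1\le 2C_*h_*(s-r),
\]
since then the bound is $\lambda_*\cdot 2C_*h_*(s-r)=\theta(s-r)$.

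\textbf{Step 2: reduce to a pointwise bound on $[t_*,s]$.} On $[t_*-\bar\tau,t_*]$ both functions equal $w_*(\cdot-t_*)$, so the difference $\delta:=q^{s,x}-v^{s,x}$ vanishes there. On $[t_*,s]$, $\delta$ is piecewise affine with a single break point at $r$:
\begin{itemize}
\item on $[t_*,r]$ both $u^{r,y}$ and $v^{s,x}$ are affine;
\item on $[r,s]$ both $q^{s,x}$ and $v^{s,x}$ are affine.
\end{itemize}
Moreover $\delta(t_*)=0$ and $\delta(s)=x-x=0$. A piecewise affine function vanishing at both endpoints attains its maximum norm at the break point, so $\sup_{[t_*,s]}\|\delta\|=\|\delta(r)\|=\|y-v^{s,x}(r)\|$.

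\textbf{Step 3: bound $\delta(r)$.} Write
\[
y-v^{s,x}(r)=(y-x)+\bigl(x-v^{s,x}(r)\bigr).
\]
The first term is at most $C_*(s-r)$ by hypothesis. For the second,
\[
x-v^{s,x}(r)=(x-z_*)\frac{s-r}{s-t_*},
\]
so its norm is at most $C_*(s-r)$, using $(s,x)\in K$. Hence $\|\delta\|\le 2C_*(s-r)$ on $[t_*,s]$. Since $\delta$ is supported on an interval of length $s-t_*\le h_*$, we get $\|q^{s,x}_s-v^{s,x}_s\|_1\le 2C_*h_*(s-r)$, which is what Step 1 required.

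The main point needing care is Step 1. One must check that the rebased polygon $q^{s,x}$ stays in the same set $V(R)$ on which the constant $\lambda_*$ is valid. The extra hypothesis $\|x-y\|\le C_*(s-r)$ is exactly what bounds its slope on the new segment $[r,s]$.
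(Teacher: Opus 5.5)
Your proposal is correct and follows the paper's proof essentially step for step: the pointwise bound $\|q^{s,x}(\xi)-v^{s,x}(\xi)\|\le\|q^{s,x}(r)-v^{s,x}(r)\|\le\|y-x\|+\frac{\|x-z_*\|}{s-t_*}(s-r)\le 2C_*(s-r)$ on $[t_*,s]$, then integration over an interval of length at most $h_*$, then the $\Phi$-Lipschitz bound using the common endpoint $x$. Two of your steps fill in what the paper leaves implicit: the check that $q^{s,x}_s$ lies in the same $V(R)$ (using $\|x-y\|\le C_*(s-r)$ to bound the slope of the new segment), and the piecewise-affine argument that replaces the paper's ``it is easy to see'' for the maximum at the break point $r$.
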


\begin{proof}
It is easy to see
\[
  \left\Vert q^{s,x}(\xi )-v^{s,x}(\xi ) \right\Vert\leq \left\Vert q^{s,x}(r)-v^{s,x}(r) \right\Vert 
\]
for all $\xi \in [t_{*},s]$. While
\[
  \left\Vert q^{s,x}(r)-v^{s,x}(r) \right\Vert=\left\Vert y-z_{*}-\frac{x-z_{*}}{s-t_{*}} (r-t_{*})  \right\Vert \leq \left\Vert y-x \right\Vert +\frac{\left\Vert x-z_{*} \right\Vert }{s-t_{*}} (s-r)\leq 2C_{*}(s-r).
\]
Then, we get
\[
  \left\Vert q^{s,x}_{s}-v^{s,x}_{s} \right\Vert _{1}\leq 2C_{*}h_{*}(s-r).
\]
The claim follows from
\[
  \left|\psi _{i}(s,q^{s,x}_{s})-\psi _{i}(s,v^{s,x}_{s})\right|\leq \lambda _{*}\left\Vert q^{s,x}_{s}-v^{s,x}_{s} \right\Vert_{1}\quad i=1,2
\]
and this completes the proof. 
\end{proof}

Combining the rebasing estimate with a doubling-of-variables argument, we obtain the following local comparison estimate.

\begin{lemma}\label{lem:short_time}
Let $\bar{\psi}_{i}$ be defined as in \eqref{eq:bar_phi} with base point $(t_{*},w_{*})$, for $i=1,2$. Set
\[
  S:=\max _{(T_{*}, x) \in K}\left\{\bar{\psi}_1(T_{*}, x)-\bar{\psi}_2(T_{*}, x)\right\} .
\]
Then, we have
\[
  \bar{\psi}_1\left(t_*, z_*\right)-\bar{\psi}_2\left(t_*, z_*\right) \leq S+8 \lambda _{*} C_* h_*^2.
\]
\end{lemma}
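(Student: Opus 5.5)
We argue by contradiction with a doubling-of-variables construction on the cone $K$, following \cite[Lemma 7.6]{MR4264642}. Suppose
\[
\bar\psi_1(t_*,z_*)-\bar\psi_2(t_*,z_*)>S+8\lambda_*C_*h_*^2 .
\]
For $\varepsilon>0$ we consider, on the compact set of pairs $((r,y),(s,x))\in K\times K$, the auxiliary function
\[
\Psi_\varepsilon(r,y,s,x):=\bar\psi_1(r,y)-\bar\psi_2(s,x)-\frac{|r-s|^2+\|y-x\|^2}{\varepsilon}+\eta\,(r-t_*)+\eta\,(s-t_*).
\]
This function is continuous by \Cref{lem:bar_phi_continuous}, so it attains a maximum at some $(r_\varepsilon,y_\varepsilon,s_\varepsilon,x_\varepsilon)$. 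Because $\eta=4\lambda_*C_*h_*$, the linear terms add at most $2\eta h_*=8\lambda_*C_*h_*^2$. Together with the Lipschitz bound on $\bar\psi_i$ inherited from $\Phi$ and \Cref{lem:continuous_polygon_extension}, the standard argument gives $|r_\varepsilon-s_\varepsilon|+\|y_\varepsilon-x_\varepsilon\|=O(\varepsilon)$. The assumed strict inequality then shows that the maximum cannot lie on $\{r=T_*\}$ or $\{s=T_*\}$, since there the value is at most $S+8\lambda_*C_*h_*^2+o(1)$. The choice $C_*=(1+2m_*)/(1/C_f-2h_*)$ is arranged so that $C_*>C_f(1+2\|u_r\|_\infty)$ for every rebased history $u_r$ with $(r,y)\in K$. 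The cone condition in \Cref{def:viscosity} can therefore be applied at interior maximizers, including at the lateral boundary of $K$, since the test cones emanating from such points stay inside $K$.

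The core step is to freeze one variable and test the other, but the test functions must be expressed through projections based at the current point, not at $(t_*,w_*)$. Suppose, say, $r_\varepsilon\le s_\varepsilon$. For $\psi_1$ at $(r_\varepsilon,u_{r_\varepsilon})$ with $u=u^{r_\varepsilon,y_\varepsilon}$, the projection based at $(r_\varepsilon,u_{r_\varepsilon})$ evaluated at $(s,x)$ uses $q^{s,x}\in\Lambda_0(r_\varepsilon,u_{r_\varepsilon};s,x)$. The base-$(t_*,w_*)$ projection instead uses $v^{s,x}$. \Cref{lem:rebase} bounds the discrepancy by $\theta(s-r_\varepsilon)$. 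We absorb this error into the test function $\phi(s,x)=\bar\psi_2(s_\varepsilon,x_\varepsilon)+\text{penalty}-\eta(s-t_*)+\theta(s-r_\varepsilon)$. Since $\eta=2\theta$, the subsolution inequality for $\psi_1$ then yields
\[
-\tfrac{2(r_\varepsilon-s_\varepsilon)}{\varepsilon}-\eta+\theta+H\bigl(r_\varepsilon,y_\varepsilon,u_{r_\varepsilon}(-\tau),p_\varepsilon\bigr)\ge0,\qquad p_\varepsilon:=\tfrac{2(y_\varepsilon-x_\varepsilon)}{\varepsilon}.
\]
The symmetric treatment of $\psi_2$ gives
\[
-\tfrac{2(r_\varepsilon-s_\varepsilon)}{\varepsilon}+\eta-\theta+H\bigl(s_\varepsilon,x_\varepsilon,v_{s_\varepsilon}(-\tau),p_\varepsilon\bigr)\le0.
\]
Subtracting the two inequalities gives
\[
2(\eta-\theta)\le H(r_\varepsilon,\dots)-H(s_\varepsilon,\dots).
\]

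The right-hand side must now be shown to vanish as $\varepsilon\to0$. The delayed arguments differ by $O(|r_\varepsilon-s_\varepsilon|+\|y_\varepsilon-x_\varepsilon\|)$ by \eqref{final_estimate}. The Lipschitz continuity of $f$ and $L$ then gives a bound of the form
\[
C\bigl(|r-s|+\|y-x\|\bigr)(1+\|p_\varepsilon\|)+\omega(|r-s|),
\]
where $\omega$ is the modulus of continuity of $f,L$ in $t$ on compacts. Because $\|p_\varepsilon\|\,\|y_\varepsilon-x_\varepsilon\|=2\|y_\varepsilon-x_\varepsilon\|^2/\varepsilon\to0$ by the usual penalization argument, the right-hand side tends to $0$. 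This contradicts $2(\eta-\theta)=2\theta>0$.

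The step I expect to be the main obstacle is the rebasing bookkeeping. First, the penalization must be combined with the cone constraint, so the test function only needs to be compared on $K(r_\varepsilon,r_\varepsilon+\delta,y_\varepsilon,C)$; this uses the hypothesis $\|x-y\|\le C_*(s-r)$ of \Cref{lem:rebase}. Second, one must verify that the $\theta(s-r)$ error enters with the right sign for both the sub- and supersolution, with $\eta=2\theta$ leaving a net margin $\theta$. If the ordering $r_\varepsilon>s_\varepsilon$ occurs, the roles are swapped and $\psi_2$ is rebased instead.
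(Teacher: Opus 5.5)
Your argument is correct, and it takes a genuinely simpler route than the paper's. The paper adds the cutoff terms $\chi(\langle z\rangle_D-C_*(t-t_*))$ and first passes to a point $(\bar t,\bar z)$ with $t_*<\bar t<T_*$, so that the maximizer of the doubled functional is forced into the interior of $K\times K$. This costs extra $\chi'$ contributions in the time derivatives and makes $p_1\neq p_2$; these are then absorbed using that $H$ is $C_*$-Lipschitz in $p$ on the cone, \eqref{eq:H_lipschitz_p_cone}. It also requires the separate bound \eqref{zhat_xhat} on $\|\hat z-\hat x\|/\varepsilon$.

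You instead use that \Cref{def:viscosity} only tests on forward cones $K(r,r+\delta,y,C)$. For $C\le C_*$, such a cone issued from any point of $K$, vertex and lateral boundary included, stays in $K$ by the triangle inequality. Hence only the terminal face $\{T_*\}$ must be excluded, and the rebasing through \Cref{lem:rebase} goes through unchanged. The two momenta coincide, and the Lipschitz bound of $\bar\psi_i$ on $K$ directly gives $|r_\varepsilon-s_\varepsilon|+\|y_\varepsilon-x_\varepsilon\|=O(\varepsilon)$, hence $\|p_\varepsilon\|=O(1)$. In your version, $C_*$ serves only to make aperture $C_*$ admissible, i.e.\ $C_*>C_f(1+2\|u_r\|_\infty)$ for $r<T_*$. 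The paper's localization is the standard finite-speed-of-propagation device; it would only be indispensable if the test sets were not forward-invariant in $K$.

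A few slips to fix when writing it up:
\begin{itemize}
\item The time-derivative term should be $+2(r_\varepsilon-s_\varepsilon)/\varepsilon$ in both viscosity inequalities. It cancels in the subtraction anyway.
\item The modulus $\omega(|r_\varepsilon-s_\varepsilon|)$ of $f$ in $t$ must carry the factor $1+\|p_\varepsilon\|$. This is harmless because $\|p_\varepsilon\|$ is bounded.
\item The case split on the order of $r_\varepsilon$ and $s_\varepsilon$ is unnecessary. Each of $\psi_1,\psi_2$ is rebased at its own maximizing point, regardless of ordering.
\item \eqref{final_estimate} is stated only for $r,s>t_*$. If a maximizer sits at the vertex, use instead the routine uniform continuity of $(r,y)\mapsto u^{r,y}_r(-\tau(r,y))$ on $K$, extended by $w_*(-\tau(t_*,z_*))$ at $(t_*,z_*)$.
\end{itemize}
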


\begin{proof}
We argue by contradiction. With $\eta  =4\lambda _{*}C_{*}h_{*}$, we have 
\[
  \bar{\psi}_1\left(t_*, z_*\right)-\bar{\psi}_2\left(t_*, z_*\right)>S+2\eta (T_{*}-t_{*}).
\]
By the continuity of $\bar{\psi}_{i}$, we can find $(\bar{t} ,\bar{z})\in K$ with $t_{*}<\bar{t}<T_{*}$ and $\bar{z}=z_{*} $ such that 
\begin{equation}\label{tbar_zbar}
  \bar{\psi}_1\left(\bar{t} , \bar{z} \right)-\bar{\psi}_2\left(\bar{t} ,\bar{z} \right)-2\eta (T_{*}-\bar{t})>S.
\end{equation}
Choose
\begin{equation}\label{eq:M_big}
    M>
    \sup_{(t,z),(s,x)\in K}
    |\bar\psi_1(t,z)-\bar\psi_2(s,x)|.
\end{equation}
Let \(\chi\in C^1(\mathbb R)\) be nonincreasing and such that
\[
    \chi(r)=0 \quad\text{for } r\le -\delta,
    \qquad
    \chi(r)=-3M \quad\text{for } r\ge 0,
\]
where $0<\delta<C_{*}(\bar{t}-t_{*})$. For \(\varepsilon,\eta,D>0\) and \((t,z),(s,x)\in K\), define
\[
    \langle z\rangle_D:=\bigl(\|z-z_*\|^2+D^2\bigr)^{1/2}
\]
and
\[
\begin{aligned}
    \Psi(t,z,s,x)
    &:=
    \bar\psi_1(t,z)-\bar\psi_2(s,x)
    -
    \frac{\|z-x\|^2+|t-s|^2}{2\varepsilon}
    -
    \eta(2T_{*}-t-s) \\
    &\quad
    +
    \chi\bigl(\langle z\rangle_D-C_*(t-t_*)\bigr)
    +
    \chi\bigl(\langle x\rangle_D-C_*(s-t_*)\bigr).
\end{aligned}
\]
Since \(K\) is compact and
\(\bar\psi_1,\bar\psi_2\) are continuous, \(\Psi\) attains its maximum at
some point \((\hat t,\hat z,\hat s,\hat x)\in K\times K\).

Note that
\[
    \Psi(\hat t,\hat z,\hat t,\hat z)
    +
    \Psi(\hat s,\hat x,\hat s,\hat x)
    \le
    2\Psi(\hat t,\hat z,\hat s,\hat x),
\]
which implies
\begin{equation}\label{eq:doubling_basic}
    \frac{\|\hat z-\hat x\|^2+|\hat t-\hat s|^2}{\varepsilon}
    \le
    \bar\psi_1(\hat t,\hat z)-\bar\psi_1(\hat s,\hat x)
    +
    \bar\psi_2(\hat t,\hat z)-\bar\psi_2(\hat s,\hat x).
\end{equation}
Since $\bar{\psi} _{i}$ are bounded on the cone, we have
\begin{equation}\label{eq:doubling_bound}
    \|\hat z-\hat x\|^2+|\hat t-\hat s|^2
    \le
    C\varepsilon.
\end{equation}
Moreover, the continuity of \(\bar\psi_1\) and \(\bar\psi_2\), \eqref{eq:doubling_basic} and \eqref{eq:doubling_bound} yield
\begin{equation}\label{eq:doubling_modulus}
    \frac{\|\hat z-\hat x\|^2+|\hat t-\hat s|^2}{\varepsilon}
    \le
    o(1) \quad \text{as } \varepsilon\downarrow 0.
\end{equation}

Furthermore, we will show that
\begin{equation}\label{zhat_xhat}
  \left\Vert \hat{z} -\hat{x}  \right\Vert /\varepsilon \leq C_{0}
\end{equation}
for some $C_{0}>0$. We take the case $\hat{t}\leq \hat{s}$. Denote $B(t,z)=\chi\bigl(\langle z\rangle_D-C_*(t-t_*)\bigr)$. Then the interpolation point $(\hat{s},\hat{z})$ is also in $K$. Then we have
\[
  \Psi (\hat{t} ,\hat{z},\hat{s},\hat{z}  )\leq \Psi(\hat t,\hat z,\hat s,\hat x),
\]
which gives
\[
 \left\Vert \hat{z} -\hat{x}  \right\Vert^{2} /(2\varepsilon)\leq  \bar{\psi}_{2}(\hat{s} ,\hat{z})- \bar{\psi}_{2}(\hat{s},\hat{x})+B(\hat{s},\hat{x})-B(\hat{s},\hat{z}).
\]
The case $\hat{t}>\hat{s}$ can be tackled by comparing $\Psi (\hat{t} ,\hat{x},\hat{s},\hat{x}  )$ and $\Psi(\hat t,\hat z,\hat s,\hat x)$. Then the claim follows from the Lipschitz properties of $\psi_{2}$ and $B$ and \Cref{lem:continuous_polygon_extension}.

We next show that the maximum point \((\hat t,\hat z,\hat s,\hat x)\) lies in the interior of $K\times K$. Choose \(D>0\) sufficiently small such that $\langle \bar z\rangle_D-C_*(\bar t-t_*)<-\delta$. By \eqref{tbar_zbar} and the construction of $\chi $, we obtain
\[
    \Psi(\hat t,\hat z,\hat s,\hat x)
    \ge
    \Psi(\bar t,\bar z,\bar t,\bar z)>S.
\]
If \(\hat t=T_{*}\), the continuity of \(\bar{\psi} _2\) and \eqref{eq:doubling_bound} imply
\[
\begin{aligned}
    \Psi(\hat t,\hat z,\hat s,\hat x)
    &\le
    \bar\psi_1(T_{*},\hat z)-\bar\psi_2(\hat s,\hat x)  \\
    &\le
    \bar\psi_2(T_{*},\hat z)-\bar\psi_2(\hat s,\hat x)+S\to S \text{ as } \varepsilon\downarrow 0,
\end{aligned}
\]
which contradicts the lower bound \(\Psi(\hat t,\hat z,\hat s,\hat x)> S\) for small \(\varepsilon\). The same argument gives
\(\hat s<T_{*}\).
If
\[
    \|\hat z-z_*\|=C_*(\hat t-t_*),
    \qquad
    \text{or}
    \qquad
    \|\hat x-z_*\|=C_*(\hat s-t_*),
\]
then one of the two penalization terms from $\chi$ equals \(-3M\). While \eqref{eq:M_big} implies
\[
  \Psi(\hat t,\hat z,\hat s,\hat x)\le -2M,
\]
a contradiction since $M>\left|S\right| $.

Before we use the viscosity inequalities, we prove some estimates for $H$. If \((t,z)\in K\) and \(\|y\|\le m_*+C_*h_*\), then
\begin{equation}\label{cone_bdd_C_*}
\begin{aligned}
    C_{f}(1+\|z\|+\|y\|)
    &\le
    C_{f}(1+2m_*+2C_*h_*)=C_*.
\end{aligned}
\end{equation}
Hence, by \Cref{lem:H_lipschitz_p},
\begin{equation}\label{eq:H_lipschitz_p_cone}
    |H(t,z,y,p)-H(t,z,y,p')|
    \le
    C_*\|p-p'\|
\end{equation}
for all \((t,z)\in K\), all admissible delayed states \(y\) arising from
the polygonal histories in \(K\), and all \(p,p'\in\mathbb R^n\). Set
\[
\begin{gathered}
Z=\langle\hat{z}\rangle_D-C_*\left(\hat{t}-t_*\right), \quad X=\langle\hat{x}\rangle_D-C_*\left(\hat{s}-t_*\right), \\
p_1=\frac{\hat{z}-\hat{x}}{\varepsilon}-\chi^{\prime}(Z) \frac{\hat{z}-z_*}{\langle\hat{z}\rangle_D}, \\
p_2=\frac{\hat{z}-\hat{x}}{\varepsilon}+\chi^{\prime}(X) \frac{\hat{x}-z_*}{\langle\hat{x}\rangle_D} 
\end{gathered}
\]
and
\[
\begin{array}{ll}
Y_1=u _{\hat{t}}(-\tau(\hat{t}, \hat{z})), & u \in \Lambda_0\left(t_*, w_* ; \hat{t}, \hat{z}\right), \\[2mm]
Y_2=v _{\hat{s}}(-\tau(\hat{s}, \hat{x})), & v \in \Lambda_0\left(t_*, w_* ; \hat{s}, \hat{x}\right) .
\end{array}
\]
By \eqref{eq:doubling_bound}, \eqref{zhat_xhat}, \eqref{eq:H_lipschitz_p_cone}, \Cref{lem:continuous_polygon_extension}, and the uniform continuity of $H$ on compact subsets, we have
\[
  H\left(\hat{t}, \hat{z}, Y_1, p_1\right)-H\left(\hat{s}, \hat{x}, Y_2, p_2\right) \leq \theta+C_*\left\|p_1-p_2\right\| .
\]
for small $\varepsilon $.

Define 
\[
  \phi _{1}(t,z)=-\Psi (t,z,\hat{s},\hat{x})+\bar{\psi }_{1}(t,z)+\theta (t-\hat{t}), \quad \phi _2(s,x)=\Psi (\hat{t} ,\hat{z},s,x)+\bar{\psi }_{2}(s,x)-\theta (s-\hat{s})
\]
for $(t,z),(s,x)\in \mathbb{R}\times \mathbb{R}^{n}$. Then $\phi _{i}\in C^{1}(\mathbb{R}\times \mathbb{R}^{n})$ for $i=1,2$. Let $(\hat{t},\hat{z})$ and $(\hat{s},\hat{x})$ be new base points. Define
\[
  \hat{\psi}_{1}(t,z)=\psi _{1}(t,q^{t,z}_{t}),\quad q^{t,z}\in \Lambda _{0}(\hat{t}, u_{\hat{t}};t,z), \quad (t,z)\in K,\quad  t>\hat{t}.
\]
Set
\[
  v ^{t,z} \in \Lambda _{0}(t_{*},w_{*}; t,z).
\]
Then \eqref{cone_bdd_C_*} and \Cref{lem:rebase} imply
\[
\begin{aligned}
  \hat{\psi}_{1}(t,z)-\phi_{1}(t,z)&=\psi _{1}(t,q^{t,z}_{t})-\psi _{1}(t,v^{t,z}_{t})-\theta (t-\hat{t})+\Psi(t,z,\hat{s},\hat{x})\\
  &\leq \Psi(t,z,\hat{s},\hat{x})\leq \Psi (\hat{t} ,\hat{z} ,\hat{s} ,\hat{x} )\\
  &=\bar{\psi}_{1}(\hat{t} ,\hat{z})-\phi_{1}(\hat{t} ,\hat{z}) \\
  &= \hat{\psi}_{1}(\hat{t} ,\hat{z})-\phi_{1}(\hat{t} ,\hat{z})
\end{aligned}
\]
for all $(t,z)\in K(\hat{t},T_{*},\hat{z},C_{*})$. That is, $\hat{\psi}_{1}-\phi _{1}$ attains local maximum at $(\hat{t} ,\hat{z})$. Similarly we can define $\hat{\psi}_{2}$ with base point $(\hat{s},v_{\hat{s}})$ and show that $\hat{\psi}_{2}-\phi_{2}$ attains local minimum at $(\hat{s},\hat{x})$. We note that
\[
C_{f}(1+2\left\Vert u_{\hat{t} } \right\Vert_{\infty})\leq C_{f}(1+2m_{*}+2C_{*}(\hat{t}-t_{*}))<C_{*}.
\]
Thus, the viscosity inequalities yields
\[
\begin{aligned}
& \frac{\hat{t}-\hat{s}}{\varepsilon}-\eta+\theta+C_* \chi^{\prime}(Z)+H\left(\hat{t}, \hat{z}, Y_1, p_1\right) \geq 0, \\
& \frac{\hat{t}-\hat{s}}{\varepsilon}+\eta-\theta-C_* \chi^{\prime}(X)+H\left(\hat{s}, \hat{x}, Y_2, p_2\right) \leq 0.
\end{aligned}
\]
Subtracting the above inequalities, by \eqref{eq:H_lipschitz_p_cone} and $\chi '\leq 0$, we obtain
\[
\begin{aligned}
0 & \leq C_*\left(\chi^{\prime}(Z)+\chi^{\prime}(X)\right)-2\theta +H\left(\hat{t}, \hat{z}, Y_1, p_1\right)-H\left(\hat{s}, \hat{x}, Y_2, p_2\right) \\
& \leq C_*\left(\chi^{\prime}(Z)+\left|\chi^{\prime}(Z)\right|+\chi^{\prime}(X)+\left|\chi^{\prime}(X)\right|\right)-\theta \\
& \leq-\theta,
\end{aligned}
\]
which yields a contradiction.
\end{proof}

\textcolor{red}{The comparison estimate established above is local in time, it requires the length of the time interval to be sufficiently small. To derive a global comparison principle, we divide the interval \([t_{1},T]\) into \(N\) subintervals of equal length, chosen so that \Cref{lem:short_time} applies on each of them. We then propagate the comparison estimate backward from the terminal time and concatenate the resulting local estimates. Since the total accumulated error is of order \(N^{-1}\), it vanishes as \(N\to\infty\), leading to the following global result.}

\begin{theorem}\label{thm:comparison}
If $\psi_{1}(T,\cdot)\leq \psi _{2}(T,\cdot)$, then $\psi _{1}\leq \psi _{2}$. In particular, if $\psi _1$ and $\psi _{2}$ are two viscosity solutions of \eqref{eq:HJE} with the terminal condition \eqref{eq:terminal}, then $\psi _{1}=\psi _{2}$.
\end{theorem}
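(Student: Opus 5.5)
We prove $\psi_1(t_1,w_1)\le\psi_2(t_1,w_1)$ for an arbitrary fixed $(t_1,w_1)\in\mathbb G$ with $t_1<T$, by chaining \Cref{lem:short_time} backward from the terminal time along a single polygonal path issued from $(t_1,w_1)$. We propagate a maximal one-step gap forward in time and then sum the errors.

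\textbf{Partition and uniform constants.} Split $[t_1,T]$ into $N$ subintervals $t_1=\tau_0<\tau_1<\dots<\tau_N=T$, each of length $h=(T-t_1)/N$. We need $h<1/(2C_f)$; to keep $C_*$ bounded we take $N$ large enough that $h\le 1/(4C_f)$. For each base history on the chain we must bound $m_*=\|w_*\|_\infty$ and $\operatorname{Lip}(w_*)$ uniformly in $N$. The histories on the chain are polygonal concatenations issued from $w_1$ with slopes at most $C_*$. Each step satisfies $\|x-z_*\|\le C_*h$, so $m_*$ grows through $m_{k+1}\le m_k+C_*(m_k)h$, and $C_*(m)\le 2C_f(1+2m)$. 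This is a discrete Gronwall recursion, giving $m_k\le (m_0+\tfrac12)e^{4C_f(T-t_1)}$ independently of $N$. Hence $C_*$, $R$ and $\lambda_*$ are bounded by constants $\bar C$, $\bar R$, $\bar\lambda$ depending only on $w_1$ and $T-t_1$, and the same holds for the Lipschitz constants of the histories. I expect this uniformity to be the main technical point; it is what forces the bounds on $\|w_*\|_\infty$ and the slopes to be tracked simultaneously.

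\textbf{Backward induction.} Define recursively $(t^{(0)},w^{(0)})=(t_1,w_1)$. Given the base point $(\tau_k,w^{(k)})$ with $z^{(k)}=w^{(k)}(0)$, apply \Cref{lem:short_time} on $[\tau_k,\tau_{k+1}]$. Since $S$ is a maximum over the compact slice of the cone (\Cref{lem:bar_phi_continuous}), it is attained at some $x^{(k+1)}$ with $\|x^{(k+1)}-z^{(k)}\|\le C_*h$. Set $w^{(k+1)}:=v_{\tau_{k+1}}$ with $v\in\Lambda_0(\tau_k,w^{(k)};\tau_{k+1},x^{(k+1)})$. By definition of $\bar\psi_i$ with base $(\tau_k,w^{(k)})$,
\[
\psi_1(\tau_k,w^{(k)})-\psi_2(\tau_k,w^{(k)})\le \psi_1(\tau_{k+1},w^{(k+1)})-\psi_2(\tau_{k+1},w^{(k+1)})+8\bar\lambda\bar C h^2 .
\]
The new history $w^{(k+1)}$ is again Lipschitz and polygonal from the old one, with slope at most $\bar C$ on the last piece, so the uniform bounds of the previous paragraph apply.

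\textbf{Summation.} Iterating the inequality $N$ times gives
\[
\psi_1(t_1,w_1)-\psi_2(t_1,w_1)\le \psi_1(T,w^{(N)})-\psi_2(T,w^{(N)})+8N\bar\lambda\bar C h^2 .
\]
The first term on the right is $\le0$ by hypothesis. The second equals $8\bar\lambda\bar C(T-t_1)^2/N$. Letting $N\to\infty$ gives $\psi_1\le\psi_2$ at $(t_1,w_1)$. The case $t_1=T$ is the hypothesis itself.

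For uniqueness, two viscosity solutions share the terminal data \eqref{eq:terminal}. Applying the comparison in both directions gives $\psi_1=\psi_2$. Combined with \Cref{thm:value_viscosity} and \Cref{lem:value_lipschitz}, this identifies $\rho$ as the unique viscosity solution in $\Phi$.
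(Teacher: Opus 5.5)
Your proposal is correct and follows essentially the same route as the paper. You partition $[t_1,T]$ into $N$ steps with $h\le 1/(4C_f)$ and bound the cone constants uniformly in $N$ by a discrete Gr\"onwall-type recursion. You then chain \Cref{lem:short_time} along polygonal histories built from maximizers at successive grid times, and sum the $O(h^2)$ errors to an $O(1/N)$ bound. The differences are cosmetic: you argue directly rather than by contradiction, and you track the Lipschitz constants of the chained histories explicitly, which the paper leaves implicit in its claim that all relevant polygonal histories lie in some $V(R)$.
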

\begin{proof}
Let $\Delta \psi :=\psi _{1}-\psi _{2}$. Assume by contradiction that there exists $(t_{1},w_{1})\in G$ with $t_{1}<T$ such that 
\[
  \Delta \psi (t_{1},w_{1})=\sigma >0.
\]
Let $T_{1}=T-t_{1}$. Choose $N\in \mathbb{N}$ such that $h:=T_{1}/N<1/(4C_{f})$. Set $t_{k}=t_{1}+(k-1)h$ for $k=1,\ldots ,N+1 $. Then $t_{N+1}=T$. Define iterative constants
\[
  C_k:=\frac{1+2 B_k}{1 / C_f-2 h}, \quad B_{k+1}:=B_k+C_k h, \quad  B_{1}:=\left\Vert w_{1} \right\Vert _{\infty}, \quad k=1,\ldots ,N.
\]
Then, we can write
\[
  C_k=C_f\left(1+2 B_{k+1}\right), \quad 1+2 B_{k+1}=\frac{1+2 B_k}{1-2 C_f h}.
\]
Thus there exist constants $\bar{B}$ and $\bar{C}$ independent of $N$ such that $B_{k}\leq \bar{B}$ and $C_{k}\leq \bar{C} $.

Denote $z_{1}=w_{1}(0)$. We fix the base point $(t_{1},z_{1})$ and define
\[
  \bar{\psi}^{1}_{i}(t,z)=\psi _{i}(t,u^{t,z}_{t}), \quad u ^{t,z}\in \Lambda_{0}(t_{1},w_{1}; t,z),\quad i=1,2
\]
for $(t,z)\in K(t_{1},t_{2},z_{1},C_{1})$. Let $z_{2}$ be the maximum point of $\bar{\psi}^{1}_{1}(t_{2},\cdot)-\bar{\psi}^{1}_{2}(t_{2},\cdot)$. Then set $w_{2}=u^{t_{2},z_{2}}_{t_{2}}$. Consecutively we can define $\bar{\psi}^{k}_{i}$ and $(z_{k},w_{k})$ for $k=1,\ldots ,N$. Then all relevant polygonal histories are contained in some $V(R)$. Let $\Delta \bar{\psi} ^{k}:=\bar{\psi}_{1}^{k}-\bar{\psi}_{2}^{k}$. Since $S\leq 0$, we apply \Cref{lem:short_time} on $[t_{N},t_{N+1}]$ to obtain
\[
  \Delta \bar{\psi}^{N}(t_{N},z_{N})\leq 8\lambda _{*}C_{N}h^{2}.
\]
Then we repeat it backward in time to find
\[
  \Delta \bar{\psi}^{k}(t_{k},z_{k})\leq \Delta \bar{\psi}^{k+1}(t_{k+1},z_{k+1}) +8\lambda _{*}C_{k}h^{2}, \quad k=1,\ldots ,N-1.
\]
These iterative inequalities yield
\[
\Delta \bar{\psi}^{1}(t_{1},z_{1})\leq 8\lambda _{*}\sum_{k=1}^{N}C_{k}h^{2}\leq 8\lambda _{*}\bar{C} T_{1}^{2}/N.
\]
Note that $\Delta \bar{\psi}^{1}(t_{1},z_{1})=\Delta \psi(t_{1},w_{1})=\sigma$. As $N\to \infty$, we obtain a contradiction. The uniqueness follows from changing the roles of $\psi _{1}$ and $\psi _{2}$.
\end{proof}

\section{Optimality conditions}\label{sec:optimality}

In this section we derive first-order optimality conditions for optimal trajectories. We assume, in addition to the standing assumptions, that \(f\) and \(L\) are \(C^1\) in \((z,y)\) uniformly with respect to \((t,a)\). We use \(f_z\), \(f_y\) to denote the matrix functions \((\frac{\partial f^i}{\partial x_j})_{1\le i,j\le n}\) and \((\frac{\partial f^i}{\partial y_j})_{1\le i,j \le n}\) respectively. We view all \(n\)-dimensional vectors as column vectors.



\begin{lemma}[Linearization of the state equation]\label{lem:linearization}
Let \((t,w)\in\mathbb G\), \(\alpha\in \mathcal{A}_{t,T}\), and \(x(\cdot)=x(\cdot\,|\,t,w,\alpha)\). We set
\[
    D(s):=s-\tau(s,x(s)),
    \qquad
    s\in[t,T]
\]
and denote its inverse by
\[
    \gamma:[D(t),D(T)]\to[t,T].
\]
Let \(R>0\) be fixed. Suppose that
\(\{w_\varepsilon\}_{\varepsilon>0}\subset V(R)\) satisfies
\[
    \operatorname{supp} w_\varepsilon\subset[-\varepsilon,0],
    \qquad
    \|w_\varepsilon\|_\infty=O(\varepsilon),
\]
and that there exists \(\bar v\in\mathbb R^n\) such that
\[
    w_\varepsilon(0)=\varepsilon\bar v+o(\varepsilon)
    \qquad
    \text{as }\varepsilon\downarrow0.
\]
Assume also that \(w\in V(R)\). Let
\[
    x_\varepsilon(s)
    :=
    x(s\,|\,t,w+w_\varepsilon,\alpha).
\]
Then
\[
    x_\varepsilon(s)
    =
    x(s)+\varepsilon v(s)+o(\varepsilon)
\]
uniformly for \(s\in[t,T]\), where \(v\) is the unique solution of the
linearized equation
\begin{equation}\label{eq:linearized_state}
\begin{aligned}
    \dot v(s)
    &=
    f_z(s,x(s),x(D(s)),\alpha(s))\,v(s)        \\
    &
    -
    f_y(s,x(s),x(D(s)),\alpha(s))
        \dot x(D(s))\,\tau_z(s,x(s))\cdot v(s)                                     \\
    &
    +
    f_y(s,x(s),x(D(s)),\alpha(s))\,v(D(s))
    \mathbf 1_{[t,D(T)]}(D(s))
\end{aligned}
\end{equation}
for a.e. \(s\in (t,T)\), with initial condition \(    v(t)=\bar v\).
\end{lemma}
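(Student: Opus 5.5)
The plan is to set
\[
r_\varepsilon(s):=\frac{x_\varepsilon(s)-x(s)}{\varepsilon}-v(s),\qquad s\in[t,T],
\]
and to prove $\sup_{[t,T]}\|r_\varepsilon\|\to0$ by a Gronwall argument. Before that, I would settle well-posedness of \eqref{eq:linearized_state}. By \Cref{ass:monotone-delay}, $D$ is strictly increasing with $D'\ge d(R)$ and $D(s)\le s$. Hence \eqref{eq:linearized_state} is a linear equation with a prescribed, non-state-dependent delay. Its coefficients $f_z$, $f_y\dot x(D(\cdot))\tau_z$ and $f_y$ are bounded measurable, since $\dot x\in L^\infty$ by \Cref{lem:trajectory_bounds} and $f,\tau\in C^1$. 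The delayed term is only active when $D(s)\ge t$, so $v$ only depends on its past values on $[t,s]$. A Picard iteration in $C([t,T])$ then gives a unique Lipschitz solution $v$. As a zeroth-order bound, \Cref{lem:continuous_dependence} applied to $w$ and $w+w_\varepsilon$ gives $\sup_{[t,T]}\|x_\varepsilon-x\|\le C(\|w_\varepsilon(0)\|+\|w_\varepsilon\|_1)=O(\varepsilon)$, using $\|w_\varepsilon\|_1=O(\varepsilon^2)$. Consequently $D_\varepsilon-D=-(\tau(s,x_\varepsilon)-\tau(s,x))=O(\varepsilon)$ uniformly.

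Next I would write the integral equation for $x_\varepsilon-x$. I Taylor-expand $f$ in $(z,y)$ around $(x(s),x(D(s)))$, using uniform $C^1$-ness to make the remainders $o(\varepsilon)$ uniformly. This reduces the problem to expanding the delayed difference
\[
x_\varepsilon(D_\varepsilon(s))-x(D(s))=\bigl[x_\varepsilon(D_\varepsilon(s))-x(D_\varepsilon(s))\bigr]+\bigl[x(D_\varepsilon(s))-x(D(s))\bigr].
\]

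For the first bracket there are two cases. When $D_\varepsilon(s)\ge t$, it equals $\varepsilon v(D_\varepsilon(s))+\varepsilon r_\varepsilon(D_\varepsilon(s))$. By uniform continuity of $v$, $\varepsilon v(D_\varepsilon)=\varepsilon v(D)+o(\varepsilon)$. When $D_\varepsilon(s)<t$, it equals $w_\varepsilon(D_\varepsilon(s)-t)$, which is $O(\varepsilon)$ and vanishes unless $D_\varepsilon(s)\in[t-\varepsilon,t]$. By monotonicity of $D_\varepsilon$ (again \Cref{ass:monotone-delay}), that set has measure at most $\varepsilon/d$, so its contribution to the time integral is $O(\varepsilon^2)$. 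The mismatch between the indicators $\mathbf 1_{[t,D(T)]}(D_\varepsilon)$ and $\mathbf 1_{[t,D(T)]}(D)$ lives on a set of measure $O(\varepsilon)$ by the same reasoning, so it is also negligible after integration.

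The second bracket is the main obstacle, since $x$ is only Lipschitz and $\dot x(D(s))$ exists only a.e. I would write
\[
x(D_\varepsilon)-x(D)=(D_\varepsilon-D)\int_0^1\dot x\bigl(D+\lambda(D_\varepsilon-D)\bigr)\,d\lambda,
\]
together with $D_\varepsilon-D=-\tau_z(s,x(s))\cdot(x_\varepsilon-x)+o(\varepsilon)$. Since $D_\varepsilon-D=O(\varepsilon)$, it then suffices to show
\[
\int_t^T\bigl\|\dot x\bigl(D_\lambda(s)\bigr)-\dot x(D(s))\bigr\|\,ds\to0,\qquad D_\lambda:=D+\lambda(D_\varepsilon-D),
\]
uniformly in $\lambda\in[0,1]$. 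Here $\dot x$ is understood on $[t-\bar\tau,T]$, with $\dot x=\dot w$ on the history part. To prove this, I approximate $\dot x$ in $L^1$ by a continuous $g$. Each $D_\lambda$ is a convex combination of two maps whose derivatives are at least $d(R)$, hence is itself monotone with $D_\lambda'\ge d$. A change of variables then bounds $\int\|\dot x(D_\lambda)-g(D_\lambda)\|\,ds$ by $\|\dot x-g\|_1/d$. The remaining term $g(D_\lambda)-g(D)$ tends to zero uniformly by uniform continuity of $g$.

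Collecting terms and subtracting the equation for $v$, I expect to obtain
\[
\|r_\varepsilon(s)\|\le o(1)+C\int_t^s\Bigl(\|r_\varepsilon(\sigma)\|+\|r_\varepsilon(D_\varepsilon(\sigma))\|\mathbf 1_{\{D_\varepsilon(\sigma)\ge t\}}\Bigr)d\sigma .
\]
This uses $\|r_\varepsilon(t)\|=\|w_\varepsilon(0)/\varepsilon-\bar v\|=o(1)$. Changing variables $\eta=D_\varepsilon(\sigma)$ in the delayed term, and using $D_\varepsilon(\sigma)\le\sigma$, bounds it by $(C/d)\int_t^s\|r_\varepsilon\|$. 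Gronwall's inequality then gives $\sup_{[t,T]}\|r_\varepsilon\|=o(1)$, which is the claim.
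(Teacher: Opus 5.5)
Your proposal is correct and follows essentially the same route as the paper: an $O(\varepsilon)$ a priori bound from \Cref{lem:continuous_dependence}, a first-order expansion of $f$, and the splitting $x_\varepsilon(D_\varepsilon)-x(D)=A_\varepsilon+B_\varepsilon$, with $B_\varepsilon$ linearized through $\dot x(D)$ and $\tau_z$, $A_\varepsilon$ controlled by the localization of $w_\varepsilon$ and the delayed increment, and a Gronwall closure. The differences are technical: you run a single Gronwall argument on $[t,T]$, keeping $r_\varepsilon(D_\varepsilon)$, changing variables along the monotone $D_\varepsilon$, and confining the indicator mismatch to a set of measure $O(\varepsilon)$, whereas the paper splits at $\gamma(t)$ and uses a uniform bound on $\dot y_\varepsilon$; you treat $B_\varepsilon$ by $L^1$-approximating $\dot x$ with continuous functions instead of using pointwise Rademacher differentiability plus dominated convergence; and you also check the well-posedness of \eqref{eq:linearized_state}, which the paper takes for granted.
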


\begin{proof}
It suffices to consider the case \(t<D(T)\), so that \(\gamma(t)\) is well-defined. The case \(t\geq D(T)\) is simpler, since the indicator term in \eqref{eq:linearized_state} vanishes.

Applying \Cref{lem:continuous_dependence}, together
with the localization of the perturbation \(w_\varepsilon\), we obtain
\begin{equation}\label{eq:xeps_x_order}
    \sup_{s\in[t,T]}\|x_\varepsilon(s)-x(s)\|
    =
    O(\varepsilon).
\end{equation}
For \(s\in [t,T]\), set
\[
    D_\varepsilon(s):=s-\tau(s,x_\varepsilon(s))
\]
and define
\[
    y_\varepsilon(s)
    =
    \frac{x_\varepsilon(s)-x(s)}{\varepsilon}.
\]
Subtracting the integral equation for \(x\) from the one for \(x_\varepsilon\), and dividing the resulting equation by \(\varepsilon\), we then deduce from the
\(C^{1}\)-regularity of \(f\) in \((z,y)\) that
\begin{equation}\label{eq:increment_equation}
\begin{aligned}
    y_\varepsilon(s)
    &=
    \bar v
    +
    \int_t^s
    f_z(\eta,x(\eta),x(D(\eta)),\alpha(\eta))
    y_\varepsilon(\eta)\,d\eta                    \\
    &\quad
    +
    \frac1\varepsilon
    \int_t^s
    f_y(\eta,x(\eta),x(D(\eta)),\alpha(\eta))
    \bigl[
        x_\varepsilon(D_\varepsilon(\eta))-x(D(\eta))
    \bigr]\,d\eta
    +
    o(1).
\end{aligned}
\end{equation}
In view of \eqref{eq:xeps_x_order}, the Lipschitz continuity of \(\tau\), and the localization of \(w_\varepsilon\), the remainder term in the expansion of \(f\) is \(o(\varepsilon)\) in \(L^1\). We decompose
\[
\begin{aligned}
    x_\varepsilon(D_\varepsilon(\eta))-x(D(\eta))
    &=
    \bigl[
        x_\varepsilon(D_\varepsilon(\eta))
        -
        x(D_\varepsilon(\eta))
    \bigr]  
    \\
    &\quad
    +
    \bigl[
        x(D_\varepsilon(\eta))-x(D(\eta))
    \bigr] 
    \\
    &=:A_\varepsilon(\eta)+B_\varepsilon(\eta).
\end{aligned}
\]

Let \(\gamma_\varepsilon\) be the inverse of \(D_\varepsilon\). We may also assume that \(t<D_\varepsilon(T)\) so that \(\gamma_\varepsilon(t)\) is well-defined. Suppose that \(\gamma_\varepsilon(t)<\gamma(t)\). The other case can be dealt with similarly. Since
\(\dot{D}\ge d(2R)>0\), \eqref{eq:xeps_x_order} implies
\begin{equation}\label{eq:gamma_eps}
    |\gamma_\varepsilon(t)-\gamma(t)|
    =
    O(\varepsilon).
\end{equation}

First we discuss the non-delayed case, i.e., \(\eta\le \gamma(t)\). By the localization of \(w_\varepsilon\), \eqref{eq:xeps_x_order} and \eqref{eq:gamma_eps}, we obtain
\[
    \int_t^{\gamma(t)} \|A_\varepsilon(\eta)\|\,d\eta
    = \int_t^{\gamma_\varepsilon(t)} \|A_\varepsilon(\eta)\|\,d\eta+ \int_{\gamma_\varepsilon(t)}^{\gamma(t)} \|A_\varepsilon(\eta)\|\,d\eta=
    o(\varepsilon).
\]
For a.e. \(\eta \in [t,\gamma(t)]\), since \(x\) is Lipschitz and \(D(\eta)\le t\), by Rademacher's theorem and \eqref{eq:xeps_x_order}, we have
\[
\begin{aligned}
B_\varepsilon(\eta)
&=
\dot w(D(\eta)-t)
\bigl(D_\varepsilon(\eta)-D(\eta)\bigr)
+
R_\eta(\varepsilon) \\
&=
-\dot w(D(\eta)-t)
\bigl[
    \tau(\eta,x_\varepsilon(\eta))
    -
    \tau(\eta,x(\eta))
\bigr]
+
R_\eta(\varepsilon) \\
&=
-\dot w(D(\eta)-t)
\bigl[
    \tau_z(\eta,x(\eta))\cdot
    (x_\varepsilon(\eta)-x(\eta))
\bigr]
+
R_\eta(\varepsilon),
\end{aligned}
\]
where \(R_\eta(\varepsilon)=o(\varepsilon)\) depends on the choice of \(\eta\). Note that
\[
|R_\eta(\varepsilon)|
=
|B_\varepsilon(\eta)-\dot w(D(\eta)-t)
\bigl(D_\varepsilon(\eta)-D(\eta)\bigr)|\le C \varepsilon.
\]
Dominated convergence theorem yields
\[
\lim_{\varepsilon\downarrow 0}\frac{1}{\varepsilon}\int_t^{\gamma(t)}|R_\eta(\varepsilon)|d\eta=0.
\]
Consequently, for \(s\in[t,\gamma(t)]\), \eqref{eq:increment_equation} becomes
\[
\begin{aligned}
    y_\varepsilon(s)
    &=
    \bar v
    +
    \int_t^s
    f_z(\eta,x(\eta),x(D(\eta)),\alpha(\eta))
    y_\varepsilon(\eta)\,d\eta                    \\
    &\quad
    -
    \int_t^s
    f_y(\eta,x(\eta),x(D(\eta)),\alpha(\eta))
        \dot w(D(\eta)-t)\,
        \tau_z(\eta,x(\eta))\cdot y_\varepsilon(\eta)\,d\eta
    +
    o(1).
\end{aligned}
\]
Subtracting the integral equation for \(v\) from this one for \(y_\varepsilon\), and applying Gronwall's
inequality, we deduce
\begin{equation}\label{eq:error_before_gamma}
    \sup_{s\in[t,\gamma(t)]}
    \|y_\varepsilon(s)-v(s)\|
    =
    o(1) \qquad \text{as } \varepsilon\downarrow 0.
\end{equation}

Now we turn to the delayed case, that is, \(\eta\ge \gamma(t)\).
The expansion arguments for \(B_\varepsilon\) still work. Thus, we have
\[
\begin{aligned}
B_\varepsilon(\eta)
&=
-\dot x(D(\eta))
\bigl[
    \tau_z(\eta,x(\eta))\cdot
    (x_\varepsilon(\eta)-x(\eta))
\bigr]
+
o(\varepsilon)
\end{aligned}
\]
for a.e. \(\eta \in [\gamma(t),T]\). For \(\eta \ge \gamma(t)\), we can write
\[
    \frac{A_\varepsilon(\eta)}{\varepsilon}
    =
    y_\varepsilon(D(\eta))+r_\varepsilon(\eta),
\]
where \(r_\varepsilon(\eta)=y_\varepsilon(D_\varepsilon(\eta))-y_\varepsilon(D(\eta))\). Since \(D(\eta),D_\varepsilon(\eta)\geq t\) and \(\|\dot{y}_\varepsilon\|\) is uniformly bounded on \([t,T]\) with respect to \(\varepsilon\), one can show that \(\|r_\varepsilon\|=o(1)\) by \eqref{eq:xeps_x_order}. Thus, for \(s>\gamma(t)\) we have
\[
\begin{aligned}
    y_\varepsilon(s)
    &=
    y_\varepsilon(\gamma(t))
    +
    \int_{\gamma(t)}^s
    f_z(\eta,x(\eta),x(D(\eta)),\alpha(\eta))
    y_\varepsilon(\eta)\,d\eta                    \\
    &\quad
    -
    \int_{\gamma(t)}^s
    f_y(\eta,x(\eta),x(D(\eta)),\alpha(\eta))
        \dot x(D(\eta))\,
        \tau_z(\eta,x(\eta))\cdot y_\varepsilon(\eta)\,d\eta                                 \\
    &\quad
    +
    \int_{\gamma(t)}^s
    f_y(\eta,x(\eta),x(D(\eta)),\alpha(\eta))
    y_\varepsilon(D(\eta))\,d\eta
    +
    o(1).
\end{aligned}
\]
Using
\eqref{eq:error_before_gamma}, and applying Gronwall's inequality for the difference of \(y_\varepsilon-v\) as before, we obtain
\[
    \sup_{s\in[t,T]}\|y_\varepsilon(s)-v(s)\|
    =
    o(1),
\]
which completes the proof.
\end{proof}
We next derive the Pontryagin minimum principle. We say that \(g\) is Fr\'echet differentiable at $(z,w)\in \mathcal{X}$ (see \eqref{domain_g} for the definition of $\mathcal{X}$) if there exist
\[
    g_z(z,w)\in\mathbb R^n,
    \qquad
    g_w(z,w)\in L^\infty([-\bar\tau,0];\mathbb R^n)
\]
such that
\[
\begin{aligned}
    g(z+\delta z,w+\delta w)-g(z,w)
    &=
    g_z(z,w)\cdot\delta z
    +
    \int_{-\bar\tau}^{0}
    g_w(z,w)(\theta)\cdot\delta w(\theta)\,d\theta +o(\left\Vert (\delta z,\delta w) \right\Vert _{\mathcal{X}} )
\end{aligned}
\]
for all $(\delta z,\delta w)\in \mathcal{X}$.

Define the pre-Hamiltonian \[\mathcal H(t,z,y,p,a):=f(t,z,y,a)\cdot p+L(t,z,y,a)\] for \((t,z,y,p,a)\in I\times \mathbb R^n\times \mathbb R^n\times \mathbb R^n\times A\).
\begin{theorem}[Pontryagin minimum principle]\label{thm:PMP}
Assume that
\(\alpha\in\mathcal{A}_{t_*,T}\) is an optimal control for \((t_*,w_*)\in \mathbb{G}\). Let \(   x(\cdot)=x(\cdot\,|\,t_*,w_*,\alpha)\). Assume also that \(g\) is Fr\'echet differentiable at \((x(T),x_{T})\). Define
\[
D(s)=s-\tau(s,x(s)), \qquad s\in [t_*,T]
\]
and let \(\gamma(\cdot)\) be its inverse. Let \(p:[t_*,T]\to\mathbb R^n\) solve the adjoint equation
\begin{equation}\label{eq:adjoint}
\begin{aligned}
\dot p(s)
&=
-f_z(s,x(s),x(D(s)),\alpha(s))^\top p(s)
-
L_z(s,x(s),x(D(s)),\alpha(s))       \\
&\quad
+
\tau_z(s,x(s))
\Bigl[
    \dot x(D(s))\cdot
    f_y(s,x(s),x(D(s)),\alpha(s))^\top p(s)
\Bigr]                              \\
&\quad
+
\tau_z(s,x(s))
\Bigl[
    \dot x(D(s))\cdot
    L_y(s,x(s),x(D(s)),\alpha(s))
\Bigr]                              \\
&\quad
-
\mathbf 1_{[t_*,D(T)]}(s)\dot\gamma(s)
f_y(\gamma(s),x(\gamma(s)),x(s),\alpha(\gamma(s)))^\top
p(\gamma(s))                        \\
&\quad
-
\mathbf 1_{[t_*,D(T)]}(s)\dot\gamma(s)
L_y(\gamma(s),x(\gamma(s)),x(s),\alpha(\gamma(s))) \\
&\quad
-
\mathbf 1_{[T-\bar\tau,T]}(s)
g_w(x(T),x_T)(s-T),\qquad \text{a.e. } s\in (t_*,T),
\end{aligned}
\end{equation}
with \(p(T)=g_z(x(T),x_T)\). Then
\begin{equation}\label{eq:PMP}
\begin{aligned}
    \mathcal H(t,x(t),x(D(t)),p(t),\alpha(t))&=
    \min_{a\in A}
    \mathcal H(t,x(t),x(D(t)),p(t),a) \qquad \text{a.e. } t\in (t_*,T].
\end{aligned}
\end{equation}
\end{theorem}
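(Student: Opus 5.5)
Our plan is to use needle (spike) variations of the control together with the linearization of \Cref{lem:linearization} and a duality argument for the adjoint equation \eqref{eq:adjoint}. Fix a Lebesgue point \(t\in(t_*,T)\) of the maps \(s\mapsto \mathcal H(s,x(s),x(D(s)),p(s),\alpha(s))\) and \(s\mapsto\mathcal H(s,x(s),x(D(s)),p(s),a)\), where \(a\) ranges over a countable dense subset of \(A\); by continuity in \(a\), it suffices to prove \eqref{eq:PMP} at such points. For \(\varepsilon>0\) small, define \(\alpha_\varepsilon:=a\) on \((t-\varepsilon,t]\) and \(\alpha_\varepsilon:=\alpha\) elsewhere, and let \(x_\varepsilon=x(\cdot\,|\,t_*,w_*,\alpha_\varepsilon)\).

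\emph{Step 1: reduction to a history perturbation at time \(t\).} On \([t_*,t-\varepsilon]\) we have \(x_\varepsilon=x\). On \((t-\varepsilon,t]\), \Cref{lem:trajectory_bounds} gives \(\|x_\varepsilon-x\|=O(\varepsilon)\). Moreover, since \(t\) is a Lebesgue point and \(x(D(s))\) is continuous,
\[
x_\varepsilon(t)-x(t)=\varepsilon\bigl[f(t,x(t),x(D(t)),a)-f(t,x(t),x(D(t)),\alpha(t))\bigr]+o(\varepsilon)=:\varepsilon\bar v+o(\varepsilon).
\]
So the history \((x_\varepsilon)_t=x_t+w_\varepsilon\), where \(w_\varepsilon\) is supported in \([-\varepsilon,0]\), satisfies \(\|w_\varepsilon\|_\infty=O(\varepsilon)\) and \(w_\varepsilon(0)=\varepsilon\bar v+o(\varepsilon)\). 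Both \(x_t\) and \(w_\varepsilon\) lie in some \(V(R)\). By uniqueness of solutions, on \([t,T]\) the trajectory \(x_\varepsilon\) solves \eqref{eq:FDE} from \((t,x_t+w_\varepsilon)\) with control \(\alpha\). Hence \Cref{lem:linearization} gives \(x_\varepsilon=x+\varepsilon v+o(\varepsilon)\) uniformly on \([t,T]\), where \(v\) solves \eqref{eq:linearized_state} with \(v(t)=\bar v\).

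\emph{Step 2: expansion of the cost.} By optimality, \(0\le J(t_*,w_*,\alpha_\varepsilon)-J(t_*,w_*,\alpha)\). We split this difference into three parts. The running cost on \((t-\varepsilon,t]\) contributes \(\varepsilon[L(\cdot,a)-L(\cdot,\alpha(t))]+o(\varepsilon)\). The running cost on \([t,T]\) is expanded to first order in \(v\) and in the delayed term. The delayed increment \(x_\varepsilon(D_\varepsilon(s))-x(D(s))\) is handled exactly as in the proof of \Cref{lem:linearization}: it equals \(\varepsilon[v(D(s))\mathbf 1_{D(s)\ge t}-\dot x(D(s))\tau_z\cdot v(s)]+o(\varepsilon)\) in \(L^1\). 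Note that the part \(D(s)<t\) contributes \(o(\varepsilon)\), since \(w_\varepsilon\) is localized. The terminal cost is expanded using Fréchet differentiability of \(g\): its first-order term is \(g_z\cdot v(T)+\int_{-\bar\tau}^0 g_w(\theta)\cdot v(T+\theta)\mathbf 1_{T+\theta\ge t}\,d\theta\), up to \(o(\varepsilon)\) in the \(\mathcal X\)-norm. Dividing by \(\varepsilon\) and letting \(\varepsilon\downarrow0\) yields
\[
0\le L(t,\cdot,a)-L(t,\cdot,\alpha(t))+\int_t^T\Bigl[L_z\cdot v-\tau_z\,(\dot x(D)\cdot L_y)\cdot v+L_y\cdot v(D)\mathbf 1_{D\ge t}\Bigr]ds+g_z\cdot v(T)+\int_{T-\bar\tau}^{T}g_w(s-T)\cdot v(s)\mathbf 1_{s\ge t}\,ds.
\]

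\emph{Step 3: duality.} We compute \(\frac{d}{ds}(p\cdot v)\) on \([t,T]\) and integrate from \(t\) to \(T\), using \(p(T)=g_z\). The advanced terms in \eqref{eq:adjoint} are matched with the delayed terms via the change of variables \(\sigma=D(s)\), i.e.\ \(s=\gamma(\sigma)\) with \(ds=\dot\gamma(\sigma)d\sigma\); this is legitimate because \(\dot D\ge d>0\) by \Cref{ass:monotone-delay}. The change of variables turns \(\int_t^T f_y^\top p\cdot v(D)\mathbf 1_{D\ge t}\,ds\) into \(\int_t^{D(T)}\dot\gamma\, f_y(\gamma(\sigma),\dots)^\top p(\gamma(\sigma))\cdot v(\sigma)\,d\sigma\), which cancels the corresponding indicator term of \(\dot p\). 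The same mechanism handles the \(L_y\) term. The \(\tau_z\) terms cancel directly, since \(\tau_z(\dot x(D)\cdot f_y^\top p)\cdot v=(f_y\dot x(D)\tau_z\cdot v)\cdot p\), and the \(g_w\) term cancels too. The outcome is that the whole first-order expression equals \(p(t)\cdot\bar v\). Therefore
\[
0\le \mathcal H(t,x(t),x(D(t)),p(t),a)-\mathcal H(t,x(t),x(D(t)),p(t),\alpha(t)),
\]
which is \eqref{eq:PMP}. One also needs existence and uniqueness of the adjoint solution \(p\). Since the equation is linear and advanced, it can be solved backward from \(T\) by the method of steps or by a contraction argument, with \(\dot\gamma\le 1/d\) bounded.

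\emph{Main obstacle.} The delicate point is Step 2, specifically justifying the \(o(\varepsilon)\) remainders in the delayed running cost and the terminal cost when the perturbation is created by a needle rather than given a priori. In particular, one must check that \Cref{lem:linearization} applies with \(w=x_t\), which requires \(x_t\in V(R)\) (true by \Cref{lem:trajectory_bounds}). One must also check that the region where \(D_\varepsilon(s)\) and \(D(s)\) straddle \(t\) has measure \(O(\varepsilon)\) and so contributes only \(o(\varepsilon)\). To handle \(B_\varepsilon\) near times where \(\dot x(D(s))\) is defined only a.e., we would reuse the dominated convergence argument from the proof of \Cref{lem:linearization}.
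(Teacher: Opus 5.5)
Your proposal is correct and follows essentially the same route as the paper. Both arguments use a needle variation at a Lebesgue point, reinterpret \((x_\varepsilon)_t\) as a localized perturbation of \(x_t\) so that \Cref{lem:linearization} applies, expand the running and terminal costs to first order, and use the duality identity for \(p\cdot v\), where the change of variables \(\sigma=D(s)\) cancels the advanced terms. One small fix: the expansions of \(x_\varepsilon(t)-x(t)\) and of the needle cost require \(t\) to be a Lebesgue point of \(s\mapsto f(s,x(s),x(D(s)),\alpha(s))\) and \(s\mapsto L(s,x(s),x(D(s)),\alpha(s))\), as the paper chooses it, not merely of the scalar \(\mathcal H\)-maps; since this set does not depend on \(a\), the countable dense subset of \(A\) is unnecessary.
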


\begin{proof}
Let \(t\in(t_*,T)\) be a Lebesgue point for functions $s \mapsto f(s,x(s),x(D(s)),\alpha (s))$ and $s \mapsto L(s,x(s),x(D(s)),\alpha (s))$, and let \(a\in A\). For small \(\varepsilon>0\), define the
needle variation of \(\alpha\) by
\[
    \alpha_\varepsilon(s)
    :=
    \begin{cases}
        a, & s\in(t-\varepsilon,t],\\
        \alpha(s), & s\in (t_*,T]\setminus (t-\varepsilon,t].
    \end{cases}
\]
Let \(x_\varepsilon=x(t_*,w_*,\alpha_\varepsilon)\), and set \(\bar x:=x(t),\bar y:=x(D(t))\). Note that \(x_\varepsilon(s)=x(s)\) for \(s\le t-\varepsilon\). Standard estimates yield
\[
    \|x_\varepsilon(s)-x(s)\|,
    \,
    \|x_\varepsilon(s)-\bar x\|, 
    \,
    \|x_\varepsilon(D_\varepsilon(s))-\bar{y}\|
    =
    O(\varepsilon)
\]
for all \(s\in[t-\varepsilon,t]\). Furthermore, we have
\begin{equation}\label{eq:needle_initial_variation}
\begin{aligned}
    x_\varepsilon(t)-x(t)
    =
    \varepsilon
    \bigl[
        f(t,\bar x,\bar y,a)
        -
        f(t,\bar x,\bar y,\alpha(t))
    \bigr]
    +
    o(\varepsilon).
\end{aligned}
\end{equation}
Starting from \(t\), we regard \(x_{\varepsilon,t}\) as a perturbed history of \(x_t\) and apply \Cref{lem:linearization} to obtain
\begin{equation}\label{apply_linearization}
    x_\varepsilon(s)=x(s)+\varepsilon v(s)+o(\varepsilon),
    \qquad
    s\in[t,T],
\end{equation}
where \(v\) solves \eqref{eq:linearized_state} with
\[
    v(t)
    =
    f(t,\bar x,\bar y,a)
    -
    f(t,\bar x,\bar y,\alpha(t)).
\]

We next use the adjoint equation. We have
\[
\begin{aligned}
    p(T)\cdot v(T)-p(t)\cdot v(t)
    =
    \int_t^T
        \dot p(s)\cdot v(s)+p(s)\cdot\dot v(s)
    \,ds.
\end{aligned}
\]
Substituting \eqref{eq:linearized_state} and \eqref{eq:adjoint} into this equation, and changing variables, we derive
\begin{equation}\label{eq:adjoint_identity}
\begin{aligned}
    p(T)\cdot v(T)-p(t)\cdot v(t)
    &=
    -
    \int_t^T
    L_z(s,x(s),x(D(s)),\alpha(s))\cdot v(s)\,ds        \\
    &\quad
    +
    \int_t^T
    L_y(s,x(s),x(D(s)),\alpha(s))\cdot
        \dot x(D(s))\,\tau_z(s,x(s))\cdot v(s) \,ds                                         \\
    &\quad
    -
    \int_{t}^{T}
    \mathbf 1_{[t,D(T)]}(s)
    L_y(\gamma(s),x(\gamma(s)),x(s),\alpha(\gamma(s)))\cdot v(s)\dot{\gamma}(s)\,ds      \\
    &\quad
    -
    \int_{t\vee(T-\bar\tau)}^{T}
    g_w(x(T),x_T)(s-T)\cdot v(s)\,ds .
\end{aligned}
\end{equation}

On the other hand, using the same expansion argument for \(L\) as for \(f\) in the proof of \Cref{lem:linearization}, we obtain
\begin{equation}\label{eq:running_cost_expansion}
\begin{aligned}
    &\int_t^T
    \Bigl[
        L(s,x_\varepsilon(s),x_\varepsilon(D_\varepsilon(s)),\alpha(s))
        -
        L(s,x(s),x(D(s)),\alpha(s))
    \Bigr]\,ds                                      \\
    &=
    \varepsilon
    \int_t^T
    L_z(s,x(s),x(D(s)),\alpha(s))\cdot v(s)\,ds      \\
    &\quad
    -
    \varepsilon
    \int_t^T
    L_y(s,x(s),x(D(s)),\alpha(s))\cdot
        \dot x(D(s))\,\tau_z(s,x(s))\cdot v(s)\,ds                                      \\
    &\quad
    +
    \varepsilon
    \int_{t}^{T}
    \mathbf 1_{[t,D(T)]}(D(s)) L_y(s,x(s),x(D(s)),\alpha(s))\cdot v(D(s))\,ds
    +
    o(\varepsilon).
\end{aligned}
\end{equation}
Including the contribution to the needle interval and changing variables, we obtain
\[
\begin{aligned}
    &\int_{t_*}^{T}
    \Bigl[
        L(s,x_\varepsilon(s),x_\varepsilon(D_\varepsilon(s)),
          \alpha_\varepsilon(s))
        -
        L(s,x(s),x(D(s)),\alpha(s))
    \Bigr]\,ds                                      \\
    &=
    \varepsilon
    \bigl[
        L(t,\bar x,\bar y,a)
        -
        L(t,\bar x,\bar y,\alpha(t))
    \bigr]                                          \\
    &\quad
    +
    \varepsilon
    \int_t^T
    L_z(s,x(s),x(D(s)),\alpha(s))\cdot v(s)\,ds      \\
    &\quad
    -
    \varepsilon
    \int_t^T
    L_y(s,x(s),x(D(s)),\alpha(s))\cdot
        \dot x(D(s))\,\tau_z(s,x(s))\cdot v(s)\,ds                                      \\
    &\quad
    +
    \varepsilon
    \int_{t}^{T}
    \mathbf 1_{[t,D(T)]}(s) L_y(\gamma(s),x(\gamma(s)),x(s),\alpha(\gamma(s)))\cdot v(s)\dot{\gamma}(s)\,ds
    +
    o(\varepsilon).
\end{aligned}
\]
Using \eqref{eq:adjoint_identity}, this becomes
\[
\begin{aligned}
&\int_{t_*}^{T}
\Bigl[
L(s,x_\varepsilon(s),x_\varepsilon(D_\varepsilon(s)),\alpha_\varepsilon(s))
-
L(s,x(s),x(D(s)),\alpha(s))
\Bigr]\,ds \\
&=
\varepsilon
\bigl[
L(t,\bar x,\bar y,a)
-
L(t,\bar x,\bar y,\alpha(t))
\bigr] \\
&\quad
+
\varepsilon p(t)\cdot v(t)
-
\varepsilon p(T)\cdot v(T) \\
&\quad
-
\varepsilon
\int_{t\vee (T-\bar\tau)}^{T}
g_w(x(T),x_T)(s-T)\cdot v(s)\,ds
+
o(\varepsilon).
\end{aligned}
\]
By the Fr\'echet differentiability of \(g\) and \eqref{apply_linearization},
\[
\begin{aligned}
    g(x_\varepsilon(T),(x_\varepsilon)_T)-g(x(T),x_T)
    &=
    \varepsilon\,g_z(x(T),x_T)\cdot v(T)             \\
    &\quad
    +
    \varepsilon
    \int_{t\vee (T-\bar{\tau})}^{T}
    g_w(x(T),x_T)(s-T)\cdot v(s)\,ds
    +
    o(\varepsilon).
\end{aligned}
\]
Then the optimality of
\(\alpha\) gives
\[
    0
    \le
    \varepsilon
    \bigl[
        L(t,\bar x,\bar y,a)
        -
        L(t,\bar x,\bar y,\alpha(t))
        +
        p(t)\cdot v(t)
    \bigr]
    +
    o(\varepsilon),
\]
where the terminal terms cancel since \(p(T)=g_z(x(T),x_T)\). Dividing by \(\varepsilon\) and letting \(\varepsilon\downarrow0\) in the above inequality, we obtain
\[
\begin{aligned}
    &p(t)\cdot
    \bigl[
        f(t,x(t),x(D(t)),a)
        -
        f(t,x(t),x(D(t)),\alpha(t))
    \bigr]                                      \\
    &\quad
    +
    L(t,x(t),x(D(t)),a)
    -
    L(t,x(t),x(D(t)),\alpha(t))
    \ge0.
\end{aligned}
\]
Since \(a\in A\) is arbitrary, this proves \eqref{eq:PMP} for a.e. \(t\in(t_*,T]\).
\end{proof}

\begin{remark}
The adjoint equation is a backward time-dependent delay differential equation, which is well-posed and admits the unique Lipschitz solution.
\end{remark}

\begin{theorem}[Generalized transversality condition]
\label{thm:generalized_transversality}
Let \(p(\cdot)\) be as in \Cref{thm:PMP}. For any \(t\in (t_*,T)\) at which \(\alpha\) is continuous, we have
\[
    \bigl(-H(t,x(t),x(D(t)),p(t)),\,p(t)\bigr)
    \in
    \partial^{\mathrm{delay}}_+\rho(t,x_t).
\]
\end{theorem}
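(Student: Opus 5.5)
We fix $t\in(t_*,T)$ at which $\alpha$ is continuous, and a direction $(\ell,\theta)\in(0,\infty)\times\mathbb R^n$. Let $v^h\in\Lambda_0(t,x_t;t+h\ell,x(t)+h\theta)$ be the polygonal extension, so that the history $v^h_{t+h\ell}$ is the comparison state. Our goal is to bound
\[
\rho(t+h\ell,v^h_{t+h\ell})-\rho(t,x_t)\le h\bigl(-\ell H(t,x(t),x(D(t)),p(t))+p(t)\cdot\theta\bigr)+o(h).
\]
The lower bound on the left-hand side comes for free. By \Cref{thm:DPP} applied along the optimal pair, we have $\rho(t,x_t)=\int_t^T L\,ds+g(x(T),x_T)$ along $(x,\alpha)$. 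So it suffices to exhibit, for the new initial datum $(t+h\ell,v^h_{t+h\ell})$, an admissible control whose cost is at most the right-hand side plus $\rho(t,x_t)$. For this we use the control $\alpha$ restricted to $(t+h\ell,T)$. Denote the resulting trajectory by $\tilde x^h$ and compare its cost with the tail of the optimal cost.

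We split the comparison into two pieces.

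\textbf{Step 1 (time shift).} The optimal trajectory itself gives the history $x_{t+h\ell}$. Its cost from $t+h\ell$ differs from $\rho(t,x_t)$ by
\[
-\int_t^{t+h\ell}L(s,x(s),x(D(s)),\alpha(s))\,ds=-h\ell L(t,x(t),x(D(t)),\alpha(t))+o(h),
\]
using the continuity of $\alpha$ at $t$.

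\textbf{Step 2 (history perturbation).} The perturbation $w_h:=v^h_{t+h\ell}-x_{t+h\ell}$ is supported in $[-h\ell,0]$ and has sup norm $O(h)$, since both functions are Lipschitz and coincide before $t$. Its endpoint value is
\[
w_h(0)=x(t)+h\theta-x(t+h\ell)=h\bigl(\theta-\ell f(t,x(t),x(D(t)),\alpha(t))\bigr)+o(h),
\]
again by continuity of $\alpha$ at $t$. This is exactly the setting of \Cref{lem:linearization}, with base time $t+h\ell$ and $\bar v=\theta-\ell f(t,\cdot,\alpha(t))$. There is one subtlety: the base time moves with $h$. I would handle this by rewriting everything as a perturbation at fixed base time $t$ of the history $x_t$ together with the control. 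Alternatively, one can observe that the linearization estimates of \Cref{lem:linearization} are uniform in the base point because they come from Gronwall bounds. Either way, $\tilde x^h(s)=x(s)+hv(s)+o(h)$, where $v$ solves \eqref{eq:linearized_state} from time $t$ with $v(t)=\bar v$.

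With this expansion, the argument of the proof of \Cref{thm:PMP} applies verbatim: the running-cost expansion \eqref{eq:running_cost_expansion}, the adjoint identity \eqref{eq:adjoint_identity}, and the Fréchet differentiability of $g$ together with $p(T)=g_z$. They give that the cost change equals $h\,p(t)\cdot v(t)+o(h)$.

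Summing Steps 1 and 2, the total change is
\[
h\bigl[-\ell L(t,\cdot,\alpha(t))-\ell\,p(t)\cdot f(t,\cdot,\alpha(t))+p(t)\cdot\theta\bigr]+o(h).
\]
By the minimum principle \eqref{eq:PMP}, which holds at $t$ by continuity of $\alpha$ together with continuity of both sides in $t$, the quantity $L+p\cdot f$ at $\alpha(t)$ equals $H(t,x(t),x(D(t)),p(t))$. Dividing by $h$ and taking $\limsup$ then yields $\partial^+\rho(t,x_t)(\ell,\theta)\le -\ell H+p(t)\cdot\theta$, which is the claimed membership.

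The main obstacle is Step 2: justifying \Cref{lem:linearization} when the base time $t+h\ell$ varies with $h$. I would first try to avoid this by viewing both $\tilde x^h$ and $x$ as solutions starting at the fixed time $t+h\ell$ and applying the lemma's proof on $[t+h\ell,T]$. The linearized solution started at $t+h\ell$ differs from $v$ started at $t$ by $O(h)$, by continuity of the linearized flow, and this error contributes only $o(h)$ to the cost. A secondary point is checking that the adjoint identity, which is written from $t$, can be used from $t+h\ell$. This costs only $o(h)$ because $p$ and $v$ are bounded.
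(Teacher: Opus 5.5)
Your argument is correct, but it orders the two operations differently from the paper. The paper never linearizes at a moving base point. It perturbs the history at the fixed time $t$ by a polygonal arc on $[t-h,t]$ ending at $x(t)+h\theta$, and applies \Cref{lem:linearization} exactly as stated at $(t,x_t)$, so that $v(t)=\theta$. It then lets the perturbed trajectory $x^h$ run under $\alpha$ up to $t+\lambda h$. Finally it uses $\rho\in\Phi$ to replace the polygonal history $\kappa_{t+\lambda h}$ (direction $(\lambda,\lambda\dot x(t+)+\theta)$) by $x^h_{t+\lambda h}$; the two are $o(h)$-close in the $\mathcal X$-norm. The time-shift cost $-\lambda h L(t,x(t),x(D(t)),\alpha(t+))$ is then read off along $x^h$. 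This is essentially your fallback idea of working at the fixed time $t$, except that the control is left untouched and the mismatch is absorbed by the Lipschitz continuity of $\rho$.

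You instead shift time along the unperturbed trajectory first, then perturb at $t+h\ell$, using the polygonal history itself as the initial datum. This removes the comparison step, since you never use $\rho\in\Phi$. Your $\bar v=\theta-\ell f(t,x(t),x(D(t)),\alpha(t))$ is exactly the paper's $v(t)$ after the reparametrization $\theta\mapsto\lambda\dot x(t+)+\theta$.

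The price is that you need a version of \Cref{lem:linearization} that is uniform in the base time $t+h\ell$, and the lemma does not state this. Uniformity does hold, but not merely because the estimates are Gronwall bounds. The one non-Gronwall step in the lemma's proof is the a.e.\ expansion of $x(D_\varepsilon(\eta))-x(D(\eta))$ via Rademacher's theorem and dominated convergence. That step is uniform here only because every base point $(t+h\ell,x_{t+h\ell})$ lies on the same fixed trajectory $x$, so the pointwise modulus of differentiability of $x$ at $D(\eta)$ does not depend on $h$.

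You also do not need to transport the adjoint identity from $t$. The cancellation between the delayed term of \eqref{eq:linearized_state} and the advanced term of \eqref{eq:adjoint} works from any initial time in $[t_*,T)$. This gives the cost change $h\,p(t+h\ell)\cdot\bar v+o(h)$ directly, and continuity of $p$ finishes the argument.

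In short, the paper's ordering lets it use the linearization lemma as a black box at the cost of an extra Lipschitz comparison. Yours gives a more direct comparison at the cost of re-checking that lemma's proof along the trajectory.
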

\begin{proof}
Let \(t\in(t_*,T]\) be a point at which \(\alpha\) is continuous. Let
\(\lambda>0\) and \(\theta\in\mathbb R^n\). For \(h>0\), let
\[
    \kappa\in
    \Lambda_0
    \bigl(
        t,x_t;
        t+\lambda h,
        x(t)+h(\lambda\dot x(t+)+\theta)
    \bigr).
\]
We shall estimate the upper Dini derivative of \(\rho\) in the direction
\((\lambda,\lambda\dot x(t+)+\theta)\).

First perturb the history before time \(t\) by a polygonal arc on
\([t-h,t]\). Let
\[
    e\in
    \Lambda_0
    \bigl(
        t-h,x_{t-h};
        t,x(t)+h\theta
    \bigr).
\]
Then
\[
    \|e_t-x_t\|_\infty=O(h).
\]
Let
\[
    x^h(\cdot):=x(\cdot\,|\,t,e_t,\alpha).
\]
By \Cref{lem:linearization},
\begin{equation}\label{x^h}
    x^h(s)=x(s)+h v(s)+o(h), \qquad s\in [t,T],
\end{equation}
where \(v\) solves
\eqref{eq:linearized_state} with \(v(t)=\theta\). Moreover, by \eqref{x^h}, and the right differentiability of \(x\) at \(t\), we have
\[
    \|\kappa_{t+\lambda h}-x^h_{t+\lambda h}\|_1=o(h),
    \qquad
    \|\kappa(t+\lambda h)-x^h(t+\lambda h)\|=o(h).
\]
Since \(\rho\in\Phi\), this implies
\[
\begin{aligned}
    &\partial^+\rho(t,x_t)
    \bigl(\lambda,\lambda\dot x(t+)+\theta\bigr)        \\
    &=
    \limsup_{h\downarrow0}
    \frac{
        \rho(t+\lambda h,\kappa_{t+\lambda h})
        -
        \rho(t,x_t)
    }{h}                                               \\
    &=
    \limsup_{h\downarrow0}
    \frac{
        \rho(t+\lambda h,x^h_{t+\lambda h})
        -
        \rho(t,x_t)
    }{h}.
\end{aligned}
\]

Using the admissible control \(\alpha\in \mathcal{A}_{t+\lambda h,T}\), we have
\[
\begin{aligned}
    \rho(t+\lambda h,x^h_{t+\lambda h})
    \le
    \int_{t+\lambda h}^{T}
    L(s,x^h(s),x^h(D^h(s)),\alpha(s))\,ds
    +
    g(x^h(T),x^h_T),
\end{aligned}
\]
where \(D^h(s):=s-\tau(s,x^h(s))\). On the other hand, the optimality of \(\alpha\) and the dynamic programming principle yield
\[
    \rho(t,x_t)
    =
    \int_t^T
    L(s,x(s),x(D(s)),\alpha(s))\,ds
    +
    g(x(T),x_T).
\]
Thus,
\[
\begin{aligned}
&\rho(t+\lambda h,x^h_{t+\lambda h})
        -
        \rho(t,x_t)\\
    \le & \int_{t}^{T}
    L(s,x^h(s),x^h(D^h(s)),\alpha(s))\,ds
   -\int_t^T
    L(s,x(s),x(D(s)),\alpha(s))\,ds\\
     &+
    g(x^h(T),x^h_T)
    -
    g(x(T),x_T)-\int_{t}^{t+\lambda h}
    L(s,x^h(s),x^h(D^h(s)),\alpha(s))\,ds.
\end{aligned}
\]
Expanding the difference of the integrals of \(L\) exactly as in
\eqref{eq:running_cost_expansion}, using the Fr\'echet differentiability
of \(g\) and the adjoint identity \eqref{eq:adjoint_identity}, and letting \(h\downarrow 0\), we obtain
\[
\begin{aligned}
    &\partial^+\rho(t,x_t)
    \bigl(\lambda,\lambda\dot x(t+)+\theta\bigr)      \\
    &\le
    p(t)\cdot v(t)
    -
    \lambda L(t,x(t),x(D(t)),\alpha(t+)),
\end{aligned}
\]
where \(\alpha(t+)\) denotes the right limit of \(\alpha\) at \(t\). Since \(v(t)=\theta\), this reads
\[
\begin{aligned}
    \partial^+\rho(t,x_t)
    \bigl(\lambda,\lambda\dot x(t+)+\theta\bigr)
    &\le
    p(t)\cdot\theta
    -
    \lambda L(t,x(t),x(D(t)),\alpha(t+))              \\
    &=
    p(t)\cdot(\lambda\dot x(t+)+\theta)
    -
    \lambda
    \bigl[
        p(t)\cdot\dot x(t+)
        +
        L(t,x(t),x(D(t)),\alpha(t+))
    \bigr].
\end{aligned}
\]
Since \(\alpha\) is continuous at \(t\), by the Pontryagin condition \eqref{eq:PMP}, we have
\[
\begin{aligned}
    p(t)\cdot\dot x(t+)
    +
    L(t,x(t),x(D(t)),\alpha(t+))&=p(t)\cdot f(t,x(t),x(D(t)),\alpha(t))+L(t,x(t),x(D(t)),\alpha(t))\\
    &=
    H(t,x(t),x(D(t)),p(t)).
\end{aligned}
\]
Therefore,
\[
\begin{aligned}
    \partial^+\rho(t,x_t)
    \bigl(\lambda,\lambda\dot x(t+)+\theta\bigr)
    \le
    -\lambda H(t,x(t),x(D(t)),p(t))
    +
    p(t)\cdot(\lambda\dot x(t+)+\theta).
\end{aligned}
\]
Since \(\lambda\ge0\) and \(\theta\in\mathbb R^n\) are arbitrary, this implies that
\[
    \bigl(-H(t,x(t),x(D(t)),p(t)),p(t)\bigr)
    \in
    \partial^{\mathrm{delay}}_+\rho(t,x_t)
\]
which yields the conclusion.
\end{proof}
\section{Semiconcavity}\label{sec:semiconcave}

In this section we prove semiconcavity properties of the value functional.
We first state the additional regularity assumptions used throughout this
section.

\begin{assumption}\label{ass:semiconcavity}
The following conditions hold.

\begin{enumerate} 
    \item \(f,L\in C^1(I\times \mathbb {R}^n \times \mathbb {R}^n \times A)\).
  
\item There exist \(\mu,\lambda_{\mathcal H}>0\) such that
\[
    (\mathcal H_a(t,z,y,p,a_1)-\mathcal H_a(t,z,y,p,a_2))\cdot (a_1-a_2)
    \geq \mu \lVert a_1-a_2\rVert_{m}^2,
\]
and
\[
    \begin{aligned}
    &\left\lVert
    \mathcal H_a(t_1,z_1,y_1,p_1,a)-\mathcal H_a(t_2,z_2,y_2,p_2,a)
    \right\rVert\\
    &\leq \lambda_{\mathcal H}\left( |t_1-t_{2}|+\lVert z_1- z_2\rVert+\|y_1-y_2\|+\|p_1-p_2\| \right)
    \end{aligned}
\]
for all \((t,z,y,p),(t_1,z_1,y_1,p_1),(t_2,z_2,y_2,p_2)\in I\times \mathbb R^n\times \mathbb R^n\times \mathbb R^n\) and \(a,a_1,a_2\in A\). Here, \(\left\Vert \cdot \right\Vert _{m}\) denotes the usual norm in \(\mathbb{R}^{m}\).

    \item For every \(r>0\), there exist constants
    \(\gamma_f(r),\gamma_L(r)>0\) such that
    \[
    \begin{aligned}
        &\left\|
        f(t,z_1,y_1,a)
        +
        f(t,z_2,y_2,a)
        -
        2f\left(
            t,\frac{z_1+z_2}{2},\frac{y_1+y_2}{2},a
        \right)
        \right\|        \\
        &\qquad\le
        \gamma_f(r)
        \bigl(
            \|z_1-z_2\|^2+\|y_1-y_2\|^2
        \bigr),
    \end{aligned}
    \]
    and
    \[
    \begin{aligned}
        &\left|
        L(t,z_1,y_1,a)
        +
        L(t,z_2,y_2,a)
        -
        2L\left(
            t,\frac{z_1+z_2}{2},\frac{y_1+y_2}{2},a
        \right)
        \right|        \\
        &\qquad\le
        \gamma_L(r)
        \bigl(
            \|z_1-z_2\|^2+\|y_1-y_2\|^2
        \bigr)
    \end{aligned}
    \]
    for all
    \(t\in I\), \(a\in A\), and
    \(z_i,y_i\in B_r\), \(i=1,2\).

    \item For every \(r>0\), there exists
    \(\gamma_\tau(r)>0\) such that
    \[
        \left|
        \tau(t,z_1)+\tau(t,z_2)
        -
        2\tau\left(t,\frac{z_1+z_2}{2}\right)
        \right|
        \le
        \gamma_\tau(r)\|z_1-z_2\|^2
    \]
    for all \(t\in I\) and \(z_1,z_2\in B_{r}\).
    \item For any $(t,w)\in \mathbb{G}$ and $\alpha \in \mathcal{A}_{t,T}$, letting $x=x(\cdot; t,w,\alpha)$, we assume that the terminal cost $g$ is Fr\'echet differentiable at $(x(T),x_{T})$ with derivative $(g_{z}(x(T),x_{T}),g_{w}(x(T),x_{T}))$. We further assume that, for $R>0$, there exists $\gamma _{g}(r)>0$ such that $\left\Vert g_{w}(x(T),x_{T}) \right\Vert _{\infty}\leq \gamma _{g}(r)$ for all $t\in I$, $w\in V(r)$ and $\alpha \in \mathcal{A}_{t,T}$.
\end{enumerate}
\end{assumption}

For the second-order estimates developed below, we need to select optimal controls with Lipschitz representatives and uniformly controlled Lipschitz constants. This regularity is essential for controlling the second-order variations of the trajectories and, in particular, of the composite delayed states. Using the Pontryagin minimum principle from the previous section, we now establish the existence of optimal controls with the required regularity.

\begin{lemma}\label{lem:Lip_control}
There exists an optimal control for any $(t,w)\in \mathbb{G}$. In addition, the optimal control admits a Lipschitz representative. Moreover, for $t\in I$, $r>0$ and $w\in V(r)$, there exists $C(r)>0$ such that $\operatorname{Lip}(\alpha )\leq C(r)$ for all $\alpha$ being the optimal control of $(t,w)$.
\end{lemma}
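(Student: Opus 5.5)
We first establish existence by the direct method, using the convexity hypothesis on \(\mathcal Q(t,z,y)\) from \Cref{ass:standing}. Fix \((t,w)\in\mathbb G\) and take a minimizing sequence \(\alpha_k\in\mathcal A_{t,T}\) with trajectories \(x_k\).

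\textbf{Step 1 (compactness of trajectories).} By \Cref{lem:trajectory_bounds}, the \(x_k\) are uniformly bounded and uniformly Lipschitz on \([t,T]\), and all agree with \(w\) on \([t-\bar\tau,t]\). Arzel\`a--Ascoli then gives a subsequence with \(x_k\to x\) uniformly.

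\textbf{Step 2 (convergence of the delayed states).} Since \(x\) is Lipschitz and \(\tau\) is Lipschitz, we get \(x_k(D_k(s))\to x(D(s))\) uniformly.

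\textbf{Step 3 (relaxation and selection).} The pairs \(\bigl(f(s,x_k,x_k(D_k),\alpha_k),\,L(\cdots)\bigr)\) are bounded in \(L^\infty\). Extract weak-\(*\) limits \((\dot x,\ell)\). By the continuity of \(f,L\), the uniform convergence of the arguments, and the closed convex values of \(\mathcal Q\), Mazur's lemma gives \((\dot x(s),\ell(s))\in\mathcal Q(s,x(s),x(D(s)))\) a.e. A Filippov measurable selection then produces \(\alpha\in\mathcal A_{t,T}\) with
\[
\dot x=f(s,x,x(D),\alpha),\qquad L(s,x,x(D),\alpha)\le\ell .
\]
Continuity of \(g\) in the \(\mathcal X\)-norm (via uniform convergence) then yields \(J(t,w,\alpha)\le\liminf J(t,w,\alpha_k)=\rho(t,w)\), so \(\alpha\) is optimal. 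Under \Cref{ass:semiconcavity}, \(g\) is Fr\'echet differentiable along trajectories, so \Cref{thm:PMP} applies to every optimal pair.

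\textbf{Step 4 (Lipschitz regularity from the minimum principle).} Let \((x,\alpha)\) be an optimal pair for \((t,w)\) with \(w\in V(r)\), and let \(p\) be the adjoint from \eqref{eq:adjoint}. By \Cref{ass:semiconcavity}(2), \(a\mapsto\mathcal H(s,x(s),x(D(s)),p(s),a)\) is \(\mu\)-strongly convex on the convex compact set \(A\). Hence it has a unique minimizer
\[
a^*(s):=\argmin_{a\in A}\mathcal H\bigl(s,x(s),x(D(s)),p(s),a\bigr),
\]
and by \eqref{eq:PMP} we have \(\alpha=a^*\) a.e. The variational inequality \(\mathcal H_a(\cdot,a^*)\cdot(b-a^*)\ge0\) for all \(b\in A\), written at two times \(s_1,s_2\) and added, combines with strong monotonicity and the Lipschitz bound on \(\mathcal H_a\) to give
\[
\mu\|a^*(s_1)-a^*(s_2)\|_m\le\lambda_{\mathcal H}\Bigl(|s_1-s_2|+\|x(s_1)-x(s_2)\|+\|x(D(s_1))-x(D(s_2))\|+\|p(s_1)-p(s_2)\|\Bigr).
\]
The representative \(a^*\) is therefore Lipschitz, with constant controlled by \(\operatorname{Lip}(x)\), \(\operatorname{Lip}(x\circ D)\) and \(\operatorname{Lip}(p)\).

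\textbf{Step 5 (uniform bounds).} The first two Lipschitz constants are bounded by \(C(r)\). This uses \Cref{lem:trajectory_bounds}, the bound \(\|\dot w\|_\infty\le r\) on the history part, and the Lipschitz property of \(\tau\) for \(D\), which gives \(|\dot D|\le 1+C\). The main obstacle is a uniform bound on \(\operatorname{Lip}(p)\), that is, on \(\|\dot p\|_\infty\), from \eqref{eq:adjoint}. We plan to handle it as follows.

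First bound \(\|p\|_\infty\) by a backward Gronwall argument on \([t,T]\). The coefficients \(f_z,f_y,L_z,L_y\) are bounded on the ball \(B_{R(r)}\), \(\tau_z\) is bounded by \(\lambda_\tau\), and \(\|\dot x\|_\infty\le C(r)\). The advanced term carries the factor \(\dot\gamma=1/\dot D\le 1/d(R)\) by \Cref{ass:monotone-delay}. Moreover, \(|p(T)|\) and \(\|g_w\|_\infty\) are bounded by \Cref{ass:semiconcavity}(4). We need to bound \(|g_z|\) as well, which follows from the local Lipschitz property of \(g\).

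Because the advanced term refers to \(p(\gamma(s))\) with \(\gamma(s)\ge s\), we set \(k(s):=\sup_{[s,T]}\|p\|\), obtain \(k(s)\le C+C\int_s^T k\), and apply Gronwall backward to get \(k\le C(r)\). Substituting back into \eqref{eq:adjoint} then gives \(\|\dot p\|_\infty\le C(r)\), and hence \(\operatorname{Lip}(\alpha)\le C(r)\) uniformly over optimal controls.
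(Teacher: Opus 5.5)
Your proposal is correct and follows the same route as the paper. Existence comes from a minimizing sequence, Arzel\`a--Ascoli and a Filippov selection; regularity comes from the Pontryagin minimum principle, the Lipschitz dependence (constant $\lambda_{\mathcal H}/\mu$) of the unique minimizer of the strongly convex pre-Hamiltonian, and a bound on $\operatorname{Lip}(p)$ read off from the adjoint equation using $\|g_w\|_\infty\le\gamma_g(r)$. You also supply several details the paper leaves implicit: the convexity of $\mathcal Q$ (via Mazur's lemma) to pass to the limit in the running cost, the bound on $p(T)=g_z$ from the local Lipschitz continuity of $g$, and the backward Gronwall argument on $\sup_{[s,T]}\|p\|$ that handles the advanced term $p(\gamma(s))$.
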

\begin{proof}
Let \(\alpha_k\in \mathcal A_{t,T}\) be a minimizing sequence for \(\rho(t,w)\) and \(x_k\) be the corresponding trajectories. Up to a subsequence, \(x_k\) converges uniformly to a Lipschitz mapping \(\bar{x}\) by the Arzel\`a-Ascoli theorem. Then, Filippov's lemma implies that there exists \(\bar{\alpha}\in \mathcal A_{t,T}\) such that \(\bar{x}\) is the trajectory associated with \(\bar \alpha\). By the Lipschitz continuity of \(L\), \(g\), \(\tau\) and the fact that \(\operatorname{Lip}(x_k)\) is uniformly bounded, we deduce that \((\bar x, \bar \alpha)\) is an optimal pair.

Define 
\[
\kappa(Y)=\argmin_{a\in A} \mathcal{H} (Y,a)
\]
for \(Y:=(t,z,y,p)\in I\times \mathbb R^n\times \mathbb R^n\times \mathbb R^n\). By the standard sensitivity estimate for strongly monotone variational inequalities, \(\kappa:  I\times \mathbb R^n\times \mathbb R^n\times \mathbb R^n \to A\) is single-valued and Lipschitz with $\operatorname{Lip}(\kappa) \leq \lambda _{\mathcal{H} }/\mu$. Applying \Cref{thm:PMP}, we find the optimal control $\bar{\alpha}$ satisfies
\[
\bar{\alpha } (s)=\kappa(s,x(s),x(D(s)),p(s)), \quad \text{a.e. } s\in [t,T].
\]
Since \(x, D, p,\kappa\) are all Lipschitz, we obtain a Lipschitz representative for $\bar{\alpha }$. 

If $t\in I$ and $w\in V(r)$, since $\left\Vert g_{w}(x(T),x_{T}) \right\Vert _{\infty}$ are uniformly bounded by $\gamma (r)$, we deduce from the equation of $p$ that $\operatorname{Lip}(p)\leq C(r)$. Thus $\operatorname{Lip}(\alpha )\leq C(r)$.
\end{proof}

For \(r>0\), let \(W(r)\) denote the set of all
piecewise \(C^{1,1}\) functions \(w\in V(r)\) satisfying
\[
    \operatorname{Lip}\!\left(
        \dot w\big|_{(\theta_{j-1},\theta_j)}
    \right)\le r
\]
on every subinterval \((\theta_{j-1},\theta_j)\) on which \(w\) is
\(C^{1,1}\). Furthermore, for \(N\in\mathbb N\), let \(W(r,N)\) denote the set of all functions \(w\in W(r)\) having at most \(N\) nonsmooth points.
\begin{lemma}\label{lem:semiconcavity_trajectories}
Let \(t_0\le t_-<t_+\le T\) and \(w_i\in W(r,N)\) with \(z_{i}=w_{i}(0)\) for $N\in \mathbb{N}$ and \(r>0\), \(i=1,2\). Let \(\alpha\in \operatorname{Lip}\left( [t_{-},t_{+}]; A \right) \) and
    \[
        x_i(\cdot)
        :=
        x(\cdot\,|\,t_-,w_i,\alpha),
        \qquad i=1,2.
    \]

\begin{enumerate}
    \item 
There exists
\(C(r)>0\) such that
\begin{equation}\label{eq:trajectory_lipschitz_initial}
        \|x_1(s)-x_2(s)\|
        \le
        C(r)
        \bigl(
            \|z_1-z_2\|+\|w_1-w_2\|_1
        \bigr),
        \qquad
        s\in[t_-,t_+].
    \end{equation}

    \item 
   Let
    \[
        w_3:=\frac{w_1+w_2}{2},
        \qquad
        z_3:=w_3(0)=\frac{z_1+z_2}{2},
    \]
    and set
    \[
        x_3(\cdot):=x(\cdot\,|\,t_-,w_3,\alpha).
    \]
Then there exists
\(C=C(r,N,\operatorname{Lip}(\alpha ))>0\) such that
    \begin{equation}\label{eq:trajectory_semiconcavity}
        \|x_1(s)+x_2(s)-2x_3(s)\|
        \le
        C
        \|w_1-w_2\|_{H^1}^2
    \end{equation}
    for all \(s\in[t_-,t_+]\). In particular, \eqref{eq:trajectory_semiconcavity} holds for \(\alpha\in \operatorname{Lip}\left((t_-,t_+];A\right)\).
\end{enumerate}
\end{lemma}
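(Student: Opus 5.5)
Part 1 is essentially \Cref{lem:continuous_dependence}. I will rerun its Gronwall argument on $[t_-,t_+]$ with the fixed control $\alpha$, using $W(r,N)\subset V(r)$. The delayed term $x_1(D_1(s))-x_2(D_2(s))$ is split as in that proof. The change of variables $\eta=D_1(s)$ is justified by \Cref{ass:monotone-delay}, and the uniform Lipschitz bound comes from \Cref{lem:trajectory_bounds}. Nothing new is needed.

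For part 2 I set $e(s):=x_1(s)+x_2(s)-2x_3(s)$ and $D_i(s)=s-\tau(s,x_i(s))$. Subtracting the integral equations gives $e(s)=\int_{t_-}^s[F_1+F_2-2F_3]\,d\xi$, where $F_i=f(\xi,x_i,x_i(D_i),\alpha)$. I add and subtract $2f\big(\xi,\tfrac{x_1+x_2}{2},\tfrac{x_1(D_1)+x_2(D_2)}{2},\alpha\big)$. The second-difference assumption in \Cref{ass:semiconcavity}(3) bounds the first piece by $\gamma_f(\|x_1-x_2\|^2+\|x_1(D_1)-x_2(D_2)\|^2)$. The second piece is bounded by $\lambda_f(\|e(\xi)\|+\|x_1(D_1)+x_2(D_2)-2x_3(D_3)\|)$. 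By part 1 and the delayed-term estimate, $\|x_1-x_2\|^2$ and the squared delayed difference are $O(\|w_1-w_2\|_{H^1}^2)$; here I use $\|\cdot\|_1\lesssim\|\cdot\|_{H^1}$ and the sup-norm embedding. The whole task therefore reduces to controlling the delayed second difference
\[
E(\xi):=x_1(D_1(\xi))+x_2(D_2(\xi))-2x_3(D_3(\xi)).
\]

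To handle $E$, I write $\bar D:=\tfrac{D_1+D_2}{2}$ and decompose
\[
E=\bigl[x_1(D_1)+x_2(D_2)-x_1(\bar D)-x_2(\bar D)\bigr]+\bigl[x_1(\bar D)+x_2(\bar D)-2x_3(\bar D)\bigr]+2\bigl[x_3(\bar D)-x_3(D_3)\bigr].
\]
The middle term is $e(\bar D)$ or, if $\bar D\le t_-$, the history's second difference $w_1+w_2-2w_3=0$; in either case it feeds Gronwall through $\sup_{[t_-,\xi]}\|e\|$. For the last term, \Cref{ass:semiconcavity}(4) gives $D_1+D_2-2D_3=-(\tau(x_1)+\tau(x_2)-2\tau(x_3))$. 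This is bounded by $\gamma_\tau\|x_1-x_2\|^2+\lambda_\tau\|e\|/2$ up to constants, and $x_3$ is Lipschitz, so the term is fine. The first term is the main obstacle. I Taylor-expand each $x_i$ around $\bar D$ to second order:
\[
x_i(D_i)-x_i(\bar D)=\dot x_i(\bar D)(D_i-\bar D)+O(|D_1-D_2|^2).
\]
Summing over $i=1,2$, the first-order parts combine into $\tfrac12(\dot x_1(\bar D)-\dot x_2(\bar D))(D_1-D_2)$, which is a product of two small quantities. This step needs $\dot x_i$ to be Lipschitz. Before $t_-$ this holds on pieces of $w_i\in W(r)$. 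After $t_-$, $\dot x_i=f(\cdot,x_i,x_i(D_i),\alpha)$ is Lipschitz because $f\in C^1$, $\alpha$ is Lipschitz, and $x_i,D_i$ are Lipschitz; this is where $\operatorname{Lip}(\alpha)$ enters.

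The segment between $D_1$ and $D_2$ may cross one of the at most $N$ kinks of $w_i$ or the junction at $t_-$, where $\dot x_i$ jumps. On such a segment I use only the first-order bound $\|x_i(D_i)-x_i(\bar D)\|\le C|D_1-D_2|$. Each kink affects only a set of $\xi$ of measure $O(|D_1-D_2|)$, by the monotonicity $\dot D\ge d$ of \Cref{ass:monotone-delay}. After integrating in $\xi$, this again yields $O(\|w_1-w_2\|^2)$ times $N$, which explains the dependence of $C$ on $N$. On smooth stretches, $\|\dot x_1(\bar D)-\dot x_2(\bar D)\|$ is bounded by $\|\dot w_1-\dot w_2\|$ before $t_-$. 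After $t_-$ it is bounded by $C(\|x_1-x_2\|+\|x_1(D_1)-x_2(D_2)\|)$ through the equation. After integration, Cauchy--Schwarz turns the pre-$t_-$ contribution into $\|\dot w_1-\dot w_2\|_2\cdot\sup|D_1-D_2|\lesssim\|w_1-w_2\|_{H^1}^2$; this is precisely why the $H^1$ norm appears.

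Collecting terms, I get $\|e(s)\|\le C\|w_1-w_2\|_{H^1}^2+C\int_{t_-}^s\sup_{[t_-,\xi]}\|e\|\,d\xi$, and Gronwall yields \eqref{eq:trajectory_semiconcavity}. The final assertion covers $\alpha$ Lipschitz on $(t_-,t_+]$; it follows by extending $\alpha$ continuously to $t_-$, which does not change $\operatorname{Lip}(\alpha)$.
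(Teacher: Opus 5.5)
Your proposal is correct and follows essentially the same route as the paper. You use the second-difference bound on $f$ to reduce everything to the delayed second difference $x_1(D_1)+x_2(D_2)-2x_3(D_3)$. You Taylor-expand it around the midpoint $(D_1+D_2)/2$ on the $C^{1,1}$ pieces, and use the monotonicity of the delayed-time maps to discard an exceptional set of measure $O(\|w_1-w_2\|_{H^1})$ near the kinks and the junction $t_-$. The semiconcavity of $\tau$ compares the midpoint with $D_3$, and Gronwall closes the estimate.

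The remaining differences are cosmetic. These are your three-term splitting of $E$, the pointwise bound on $\dot x_1-\dot x_2$ after $t_-$, and Gronwall via a running supremum rather than the change of variables $m(\xi)\mapsto\xi$.
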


\begin{proof}
The first estimate \eqref{eq:trajectory_lipschitz_initial} follows directly from \Cref{lem:continuous_dependence}, applied on the time interval $[t_-,t_+]$.

We next show \eqref{eq:trajectory_semiconcavity}. For \(i=1,2,3\), set
\[
    D_i(s):=s-\tau(s,x_i(s)),\quad s\in [t_-,t_+].
\]
Define
\[
    q(s)=x_1(s)+x_2(s)-2x_3(s).
\]
Using \Cref{ass:semiconcavity}, we obtain
\[
\begin{aligned}
    \|q(s)\|
    &\le
    C(r)\bigg[
    \int_{t_-}^s
    \bigl(
        \|x_1(\xi)-x_2(\xi)\|^2
        +
        \|x_1(D_1(\xi))-x_2(D_2(\xi))\|^2
    \bigr)\,d\xi       \\
    &\quad
    +
    \int_{t_-}^s \|q(\xi)\|\,d\xi+\int_{t_-}^s A(\xi)\,d\xi
    \bigg]
\end{aligned}
\]
for \(s\in[t_-,t_+]\), where
\[
A(\xi)=
    \|x_1(D_1(\xi))+x_2(D_2(\xi))-2x_3(D_3(\xi))\|.
\]

By the monotonicity assumption on the delay and the change of variables \(\eta=D_1(\xi)\) as in the proof of \Cref{lem:continuous_dependence}, we have
\begin{equation}\label{eq:D1D2}
\begin{aligned}
    &\int_{t_-}^s
    \|x_1(D_1(\xi))-x_2(D_2(\xi))\|^2\,d\xi              \\
    &\qquad\le
    C(r)
    \left(
        \|w_1-w_2\|_2^2
        +
        \int_{t_-}^s
        \|x_1(\xi)-x_2(\xi)\|^2\,d\xi
    \right).
\end{aligned}
\end{equation}

To deal with the integral of \(A(\xi)\), we set
\[
    m(\xi):=\frac{D_1(\xi)+D_2(\xi)}{2},\quad \xi\in [t_-,t_+].
\]
Denote 
\[
\delta:= \|w_1-w_2\|_{H^1}>0.
\]
By the Lipschitz property of \(\tau\), \eqref{eq:trajectory_lipschitz_initial} yields
\begin{equation}\label{eq:D12_m}
|D_1(\xi)-m(\xi)|=|D_2(\xi)-m(\xi)|\leq C\|x_1(\xi)-x_2(\xi)\|\leq C_0\delta
\end{equation}
for all \(\xi\in [t_-,t_+]\). 

We know that \(x_i:[t_{-}-\bar{\tau} ,t_{+}] \to \mathbb R^n\) is piecewise \(C^{1,1}\), \(i=1,2\). We collect all the nonsmooth points of \(w_{1}\) and \(w_{2}\) and denote them by \(t_{1}\leq t_{2}\leq \cdots \leq t_{k}=t_{-}\) for some \(k\in \mathbb{N}\).

Set \(I_0=[t_{-},t_{+}]\) and \(I_{m}=[m(t_{-}),m(t_{+})]\). Then \(I_{m}\neq \emptyset\) since \(m\) is strictly increasing. Let \(I_{i}=I_{m}\cap [t_{i}-C_{0}\delta ,t_{i}+C_{0}\delta ]\), \(i=1,2,\ldots,k\). For any fixed \(\xi \in I_0\), we note that \(D_{1}(\xi ),D_{2}(\xi ),m(\xi )\) must lie on the same side of \(t_{i}\) when \(m(\xi )\in I_{m}\setminus I_{i}\) by \eqref{eq:D12_m}. In particular, \(I_{i}=\emptyset\) implies that, for every fixed \(\xi \in I_0\), the points \(D_{1}(\xi ),D_{2}(\xi ),m(\xi )\) must lie on the same side of \(t_{i}\). We denote \(J_{i}=m ^{-1} I_{i}\). Since \(\dot{m}\) is bounded below by a positive constant on \([t_-,t_+]\), we have \(|J_i|< C_1 \delta\) for some \(C_1>0\). 

\textcolor{red}{Since each trajectory \( x_i\colon [t_{-}-\bar{\tau},t_{+}] \to \mathbb{R}^n\), with \( i=1,2, \) is piecewise \(C^{1,1}\), its possible nonsmooth points on \([t_{-}-\bar{\tau},t_{-}]\) are inherited from the corresponding initial history. Let \[ t_1 \leq t_2 \leq \cdots \leq t_k=t_{-} \] be the collection of all nonsmooth points of \(w_1\) and \(w_2\), including the junction point \(t_{-}\). Set \[ I_0=[t_{-},t_{+}] \qquad\text{and}\qquad I_m=m(I_0)=[m(t_{-}),m(t_{+})]. \] Since \(m\) is strictly increasing, \(I_m\) is a nonempty interval. For each \(i=1,\ldots,k\), define \[ I_i:=I_m\cap [t_i-C_0\delta,t_i+C_0\delta] \qquad\text{and}\qquad J_i:=m^{-1}(I_i). \] Thus, \(J_i\) consists of those times \(\xi\in I_0\) for which \(m(\xi)\) lies within distance \(C_0\delta\) of the nonsmooth point \(t_i\). If \(\xi\in I_0\setminus J_i\), then \[ |m(\xi)-t_i|>C_0\delta. \] Together with \eqref{eq:D12_m}, this implies that \(D_1(\xi)\), \(D_2(\xi)\), and \(m(\xi)\) all lie on the same side of \(t_i\). Consequently, if \[ \xi\in I_0\setminus\bigcup_{i=1}^k J_i, \] then these three points belong to a common subinterval on which both \(x_1\) and \(x_2\) are \(C^{1,1}\). Moreover, if \(I_i=\emptyset\), then \(J_i=\emptyset\), and the preceding conclusion with respect to \(t_i\) holds for every \(\xi\in I_0\). Finally, since \(\dot m\) is bounded below by a positive constant, there exists \(c_m>0\) such that \[ \dot m(\xi)\geq c_m \qquad\text{for a.e. }\xi\in I_0. \] Hence, \[ |J_i| \leq \frac{|I_i|}{c_m} \leq \frac{2C_0}{c_m}\,\delta. \] In particular, there exists \(C_1>0\), independent of \(i\), such that \[ |J_i|\leq C_1\delta, \qquad i=1,\ldots,k. \]}

We claim that
\begin{equation}\label{int_A}
\int_{t_-}^s A(\xi)\,d\xi\leq C(r,N,\operatorname{Lip}(\alpha))(\int_{t_-}^{s}\|q(\xi)\|\,d\xi+ \delta^2)
\end{equation}
for all \(s\in I_0\). By \eqref{eq:D1D2}, \eqref{eq:trajectory_lipschitz_initial}, Gr\"onwall's inequality, and the Sobolev embedding of \(H^1\), we then obtain \eqref{eq:trajectory_semiconcavity}.

Now we prove the claim. Note that, for fixed \(\xi \in  I_0\setminus \bigcup_{i=1}^{k}J_{i}\), the points \(D_{1}(\xi ),D_{2}(\xi ),m(\xi )\) must lie in some common \(C^{1,1}\) subinterval of \(x_{1}\) and \(x_{2}\). By Taylor's formula, we have
\[
\begin{aligned}
&x_1(D_1(\xi))=x_1(m(\xi))+\dot x_{1}(m(\xi ))(D_{1}(\xi )-m(\xi))+R_{1}(\xi ),\\
&x_{2}(D_{2}(\xi))=x_2(m(\xi))+\dot x _{2}(m(\xi ))(D_{2}(\xi )-m(\xi))+R_{2}(\xi ).
\end{aligned}
\]
If \(m(\xi)<t_{-}\), then \(\dot x_{1}(m(\xi ))=\dot w_1(m(\xi )-t_{-})\) and \(|R_{i}(\xi )|\leq C(r)\|x_{1}(\xi )-x_{2}(\xi )\|^2\) by \eqref{eq:D12_m}. Thus we obtain
\[
  A(\xi)\le C(r)\left(\|q(m(\xi))\|+\left\Vert q(\xi ) \right\Vert +\|\dot{w}_1(m(\xi)-t_{-})-\dot{w}_{2}(m(\xi)-t_{-})\|^2+\|x_1(\xi)-x_2(\xi)\|^2\right).
\]
Here we used the semiconcavity of \(\tau\), which gives
\[
    |m(\xi)-D_3(\xi)|
    \le
    C(r)
    \bigl(
        \|x_1(\xi)-x_2(\xi)\|^2+\|q(\xi)\|
    \bigr).
\]
Moreover,
\[
\begin{aligned}
    &\int_{t_-}^s
    \|\dot w_1(m(\xi)-t_-)-\dot w_2(m(\xi)-t_-)\|^2\,d\xi\le 
    C(r)\delta ^{2}.
\end{aligned}
\]
If \(m(\xi)>t_{-}\), the expansion argument then yields
\[
A(\xi)\le C(r,\operatorname{Lip}(\alpha))\left(\|q(m(\xi))\|+\left\Vert q(\xi ) \right\Vert +\|\dot{x}_1(m(\xi))-\dot{x}_{2}(m(\xi))\|^2+\|x_1(\xi)-x_2(\xi)\|^2\right).
\]
Moreover, we have 
\[
\begin{aligned}
    &\int_{t_-}^s
    \|\dot x_1(\xi)-\dot x_2(\xi)\|^2\,d\xi\\
    &= \int_{t_-}^s
    \|f(\xi,x_1(\xi),x_1(D_1(\xi)),\alpha(\xi))-f(\xi,x_2(\xi),x_2(D_2(\xi)),\alpha(\xi))\|^2\,d\xi\\
    &\le
    C(r)
     \int_{t_-}^s
    \bigl(
        \|x_1(\xi)-x_2(\xi)\|^2
        +
        \|x_1(D_1(\xi))-x_2(D_2(\xi))\|^2
    \bigr)\,d\xi\\
    &\le C(r)\delta^2
\end{aligned}
\]
for \(s\in I_0\).

Next we deal with \(J_{i}\). From \eqref{eq:trajectory_lipschitz_initial}, we know that
\[
\sup_{\xi\in [t_-,t_+]}\|x_1(\xi)-x_3(\xi)\|\leq C(r) \delta
\]
and
\[
\sup_{\xi\in [t_-,t_+]}\|x_2(\xi)-x_3(\xi)\|\leq C(r) \delta.
\]
Thus we have
\[
\begin{aligned}
\int_{J_{i}} A(\xi)\,d\xi \leq& \int_{J_{i}} \|x_1(D_1(\xi))-x_1(D_3(\xi))\|+\|x_2(D_2(\xi))-x_2(D_3(\xi))\|\\
&+\|x_1(D_3(\xi))-x_3(D_3(\xi))\|+\|x_2(D_3(\xi))-x_3(D_3(\xi))\| \,d\xi\\
\leq & C(r)\delta^2.
\end{aligned}
\]
Hence 
\[
  \int_{\bigcup_{i=1}^{k} J_{i}} A(\xi)\,d\xi\leq C(r,N)\delta ^{2}.
\]

Using the above estimates and changing variables \(m(\xi )\mapsto \xi \), we obtain the claim.
\end{proof}
\begin{remark}\label{rmk:zero_delay}
If \(D_1(t_{-})=D_2(t_-)=t_-\), then \(D_1(\xi),D_2(\xi),m(\xi)\geq t_{-}\) for all \(\xi\geq t_{-}\). In this case, \eqref{eq:trajectory_semiconcavity} can be replaced by
\[
\|x_1(s)+x_2(s)-2x_3(s)\|\le C(r)(\|z_1-z_2\|^2+\|w_1-w_2\|_2^2).
\]
\end{remark}

\begin{theorem}\label{thm:space_semiconcavity_value}
Assume that \(g\) is locally semiconcave, i.e., for \(r>0\), there exists \(C(r)\) such that
\[
    g(z+z_0,w+w_0)
    +
    g(z-z_0,w-w_0)
    -
    2g(z,w)                                 
    \le
    C(r)
      \|w_0\|_{H^1}^2
\]
for all $(z,w),(z_{0},w_{0})\in \mathcal{X}$ with \(w,w_{0}\in W(r)\). Then the value functional
\(\rho\) is locally semiconcave with respect to the history variable. That is, there exists \(C(r,N)\) such that
\[
    \rho (t,w+w_0)
    +
    \rho (t,w-w_0)
    -
    2\rho (t,w)                                 
    \le
    C(r,N)
      \|w_0\|_{H^1}^2,
\]
whenever
\[
    t\in I,\quad w,w_0 \in W(r,N).
\]
\end{theorem}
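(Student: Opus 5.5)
The plan follows the classical finite-dimensional argument: freeze the optimal control of the midpoint history and compare the costs of the two perturbed histories against it. Fix \(t\in I\) and \(w,w_0\in W(r,N)\), and set \(w_1:=w+w_0\), \(w_2:=w-w_0\), so that \(w=(w_1+w_2)/2\). Then \(w_1,w_2\in W(2r,2N)\) after enlarging constants. By \Cref{lem:Lip_control}, \((t,w)\) admits an optimal control \(\alpha\) with \(\operatorname{Lip}(\alpha)\le C(r)\). Since \(w\in V(r)\), this Lipschitz bound is uniform. Using this same \(\alpha\) for the histories \(w_1,w_2\), the definition of \(\rho\) gives
\[
\rho(t,w_1)+\rho(t,w_2)-2\rho(t,w)\le J(t,w_1,\alpha)+J(t,w_2,\alpha)-2J(t,w,\alpha).
\]
Let \(x_i=x(\cdot\,|\,t,w_i,\alpha)\) for \(i=1,2\), and \(x_3=x(\cdot\,|\,t,w,\alpha)\). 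Note that \(\|w_1-w_2\|_{H^1}=2\|w_0\|_{H^1}=:2\delta\). The task is to bound the running-cost and terminal-cost second differences by \(C\delta^2\).

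\emph{Running cost.} Write \(L_i(s)=L(s,x_i(s),x_i(D_i(s)),\alpha(s))\). I split
\[
L_1+L_2-2L_3=\Bigl[L_1+L_2-2L\bigl(s,\tfrac{x_1+x_2}{2},\tfrac{x_1(D_1)+x_2(D_2)}{2},\alpha\bigr)\Bigr]+\Bigl[2L\bigl(s,\tfrac{x_1+x_2}{2},\tfrac{x_1(D_1)+x_2(D_2)}{2},\alpha\bigr)-2L_3\Bigr].
\]
For the first bracket I use the semiconcavity-type bound on \(L\) in \Cref{ass:semiconcavity}. It is controlled by \(\|x_1-x_2\|^2+\|x_1(D_1)-x_2(D_2)\|^2\), whose integral is \(O(\delta^2)\) by \eqref{eq:trajectory_lipschitz_initial} and \eqref{eq:D1D2}. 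For the second bracket I use the local Lipschitz continuity of \(L\). It is bounded by \(C(\|q(s)\|+A(s))\), with \(q\) and \(A\) as in the proof of \Cref{lem:semiconcavity_trajectories}. Now \(\|q\|\le C\delta^2\) by \eqref{eq:trajectory_semiconcavity}, and \(\int A\le C(\int\|q\|+\delta^2)\) by the claim \eqref{int_A}. This gives an \(O(\delta^2)\) bound with constant \(C(r,N)\), through \(\operatorname{Lip}(\alpha)\le C(r)\). Strictly speaking, the claim \eqref{int_A} is proved inside the lemma, so I would restate it as part of the conclusion of \Cref{lem:semiconcavity_trajectories}, on \([t,T]\).

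\emph{Terminal cost.} Set \(\hat x:=(x_1+x_2)/2\), so that \(\hat x_T=((x_1)_T+(x_2)_T)/2\). I write
\[
g(x_1(T),(x_1)_T)+g(x_2(T),(x_2)_T)-2g(\hat x(T),\hat x_T)+2\bigl[g(\hat x(T),\hat x_T)-g(x_3(T),(x_3)_T)\bigr].
\]
The second part is bounded via the local Lipschitz property of \(g\) in the \(\mathcal X\)-norm by \(C\bigl(\|q(T)\|+\|q_T\|_1\bigr)\). On \([t,T]\), \(q\) is \(O(\delta^2)\) by \eqref{eq:trajectory_semiconcavity}. On the part of the history window preceding \(t\), \(q=w_1+w_2-2w=0\). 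The first part is handled by the semiconcavity hypothesis on \(g\), applied with center \(\hat x_T\) and increment \(((x_1)_T-(x_2)_T)/2\). This yields \(C\|(x_1)_T-(x_2)_T\|_{H^1}^2\).

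\textbf{Main obstacle.} Both \(H^1\)-control and \(W(\cdot)\)-membership of the terminal histories are needed. The derivative bound is manageable: on \([t,T]\), \(\dot x_1-\dot x_2\) has \(L^2\)-norm \(O(\delta)\) by the estimate in the proof of \Cref{lem:semiconcavity_trajectories}. Before \(t\), the difference is \(\dot w_1-\dot w_2\), whose \(L^2\)-norm is at most \(2\delta\). Together with the sup bound \eqref{eq:trajectory_lipschitz_initial}, this gives \(\|(x_1)_T-(x_2)_T\|_{H^1}\le C\delta\). Membership in \(W(R)\) requires a Lipschitz bound on \(\dot x_i\) on each smooth piece. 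Here \(\dot x_i=f(s,x_i,x_i(D_i),\alpha)\), with \(f\in C^1\), \(\alpha\) Lipschitz, and \(x_i\circ D_i\) Lipschitz. This bound holds provided \(f\) is locally Lipschitz in time as well. The number of nonsmooth points stays bounded by \(N\) plus the junction \(t\). I expect this verification, together with checking that the histories fall into the class where \(g\)'s semiconcavity applies, to be the delicate point; the rest is bookkeeping of constants depending on \((r,N)\).
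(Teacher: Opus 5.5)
Your proposal is correct and follows essentially the same route as the paper. Both arguments freeze the Lipschitz optimal control of the midpoint history from \Cref{lem:Lip_control}, use \Cref{lem:semiconcavity_trajectories} (including its internal claim \eqref{int_A}) for the running cost, and split the terminal cost into a semiconcavity part bounded by $\|(x_1)_T-(x_2)_T\|_{H^1}^2$ and a Lipschitz part bounded by $\|q(T)\|+\|q_T\|_1$. Your extra check that the terminal histories lie in some $W(R)$ is a detail the paper leaves implicit, and your caveat about time regularity of $f$ is already covered by the hypothesis $f\in C^1(I\times\mathbb R^n\times\mathbb R^n\times A)$ in \Cref{ass:semiconcavity}.
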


\begin{proof}
Fix \(r>0\). By \Cref{lem:trajectory_bounds}, all trajectories starting from histories in \(W(r)\) are bounded on \([t,T]\) for all admissible controls.

Let \(t\in I\) and let \(w,w_{0}\in W(r,N)\) with \(z=w(0)\) and \(z_0=w_0(0)\) for \(r>0\). Let \(\alpha\in \mathcal{A}_{t,T}\) be an optimal control for \((t,w)\). Then \Cref{lem:Lip_control} implies that $\operatorname{Lip}(\alpha )\leq C(r)$.
Let
\[
    x:=x(\cdot\,|\,t,w,\alpha),
    \qquad
    x_+:=x(\cdot\,|\,t,w+w_0,\alpha),
    \qquad
    x_-:=x(\cdot\,|\,t,w-w_0,\alpha).
\]
By \Cref{lem:semiconcavity_trajectories} and the Sobolev embedding, there exists \(C(r)>0\) such that 
\begin{equation}\label{eq:xpm_semiconcavity}
    \|x_+(s)-x_-(s)\|
    \le
    C(r)\|w_0\|_{H^{1}}
\end{equation}
and
\begin{equation}\label{eq:xpm_second_order}
    \|x_+(s)+x_-(s)-2x(s)\|
    \le
    C(r)\|w_0\|_{H^1}^2
\end{equation}
for all \(s\in[t,T]\).

Set
\[
D_{\pm}(s)=s-\tau(s,x_\pm(s)), \quad D(s)=s-\tau(s,x(s)) \qquad s\in [t,T].
\]
Using the definition of \(\rho\) and the optimality of
\(\alpha\), we obtain
\[
\begin{aligned}
    &\rho(t,w+w_0)+\rho(t,w-w_0)-2\rho(t,w)        \\
    &\le
    g(x_+(T),(x_+)_T)
    +
    g(x_-(T),(x_-)_T)
    -
    2g(x(T),x_T)                                    \\
    &\quad
    +
    \int_t^T
    \Bigl[
        L(s,x_+(s),x_+(D_+(s)),\alpha(s))           \\
    &\qquad\qquad
        +
        L(s,x_-(s),x_-(D_-(s)),\alpha(s))           \\
    &\qquad\qquad
        -
        2L(s,x(s),x(D(s)),\alpha(s))
    \Bigr]\,ds.
\end{aligned}
\]

By the semiconcavity and the Lipschitz
continuity of \(g\), \eqref{eq:xpm_semiconcavity} and \eqref{eq:xpm_second_order}, we estimate
\[
\begin{aligned}
    &g(x_+(T),(x_+)_T)
    +
    g(x_-(T),(x_-)_T)
    -
    2g(x(T),x_T)                                    \\
    &\le
    C(r)
    \bigg[
        \|(x_+)_T-(x_-)_T\|_{H^{1}}^2
                                                    +
        \|x_+(T)+x_-(T)-2x(T)\|
        +
        \|(x_+)_T+(x_-)_T-2x_T\|_1
   \bigg].              
\end{aligned}
\]
As in the proof of \Cref{lem:semiconcavity_trajectories}, along with \eqref{eq:xpm_semiconcavity}, we deduce
\[
  \|(x_+)_T-(x_-)_T\|_{H^{1}}^2\leq C(r)\left\Vert w_{0} \right\Vert _{H^{1}}^2.
\]
Then \eqref{eq:xpm_second_order} yields that
\[
  g(x_+(T),(x_+)_T)
    +
    g(x_-(T),(x_-)_T)
    -
    2g(x(T),x_T) \leq C(r)\left\Vert w_{0} \right\Vert _{H^{1}}^2.
\]

Using the semiconcavity of
\(L\), as in the proof of \Cref{lem:semiconcavity_trajectories}, we obtain
\[
\begin{aligned}
    &\int_t^T
    \Bigl|
        L(s,x_+(s),x_+(D_+(s)),\alpha(s))
        +
        L(s,x_-(s),x_-(D_-(s)),\alpha(s))  \\
    &\qquad\qquad
        -
        2L(s,x(s),x(D(s)),\alpha(s))
    \Bigr|\,ds                                    \\
    &\quad\le
    C(r)
       \|w_0\|_{H^1}^2.
\end{aligned}
\]
Thus
\[
    \rho(t,w+w_0)+\rho(t,w-w_0)-2\rho(t,w)
    \le
    C(r)\|w_0\|_{H^1}^2.
\]
\end{proof}

We conclude this section with a joint semiconcavity estimate. To state it, for \(r>0\), define \[ W'(r):= W(r)\cap C^{1,1}\bigl([-\bar{\tau},0];\mathbb{R}^n\bigr). \] We also introduce the solution manifold associated with \eqref{eq:FDE}. For \(t\in I\) and \(a\in A\), let \[ X_{t,a}:= \left\{ w\in C^{1,1}\bigl([-\bar{\tau},0];\mathbb{R}^n\bigr) : \dot{w}(0) = f\bigl(t,w(0),w(-\tau(t,w(0))),a\bigr) \right\}. \] Thus, \(X_{t,a}\) consists of the histories satisfying the compatibility condition at the junction between the prescribed history and the forward solution of \eqref{eq:FDE}. We refer to \(X_{t,a}\) as the \(C^{1,1}\) solution manifold associated with \(t\) and \(a\). Since the delay is discrete, a \(C^{1,1}\) history satisfying this compatibility condition can be constructed by interpolation. Hence, \[ X_{t,a}\neq\emptyset \qquad \text{for every }(t,a)\in I\times A. \]

\begin{theorem}\label{thm:time_semiconcavity_value}
Assume that \(g\) is locally semiconcave. Let \(r>0\) and \(h>0\). Then there exists \(C(r)>0\) such that
\[
\begin{aligned}
    &\rho(t+h,w+w_0)
    +
    \rho(t-h,w-w_0)
    -
    2\rho(t,w)                                      \\
    &\qquad\le
    C(r)
    \bigl(
        h^2+\|w_0\|_{H^1}^2
    \bigr)
\end{aligned}
\]
whenever
\[
    t_0\le t-h< t+h\le T,   \qquad
   w,w_0\in W'(r), \qquad w-w_{0}\in X_{t,\alpha(t)},
\]
where \(\alpha \) is an optimal control for \((t,w)\).
\end{theorem}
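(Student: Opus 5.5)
The plan is the classical scheme for joint semiconcavity of a value function: use one optimal control for the central point, time-shifted copies of it for the two perturbed points, and reduce everything to a second-order trajectory estimate.

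\emph{Setup.} Let \(\alpha\) be an optimal control for \((t,w)\). By \Cref{lem:Lip_control}, \(\alpha\) has a Lipschitz representative with \(\operatorname{Lip}(\alpha)\le C(r)\). Set \(x:=x(\cdot\,|\,t,w,\alpha)\). Extend \(\alpha\) to \([t-h,T]\) by setting it equal to the constant \(\alpha(t)\) on \([t-h,t]\); this extension is still Lipschitz.

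\emph{Comparison trajectories.} Define \(x_+:=x(\cdot\,|\,t+h,w+w_0,\alpha)\) on \([t+h,T]\) and \(x_-:=x(\cdot\,|\,t-h,w-w_0,\alpha)\) on \([t-h,T]\). Then
\[
\rho(t\pm h,w\pm w_0)\le J(t\pm h,w\pm w_0,\alpha), \qquad \rho(t,w)=J(t,w,\alpha).
\]
The key reduction is to view \(x_-\) on \([t-h,t+h]\) as producing a new history at time \(t+h\), and to compare the three trajectories at the common time \(t+h\) by semiconcavity in the history variable. This is the step where \(w-w_0\in X_{t,\alpha(t)}\) enters. Because of the compatibility condition, \(x_-\) glues to \(w-w_0\) in a \(C^{1}\) way (in fact \(C^{1,1}\), since \(\alpha\) is constant near the junction). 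Hence \((x_-)_{t+h}\) is a \(C^{1,1}\) history, and its distance in \(H^1\) from the time-translate of \(w-w_0\) is controlled. The same holds for \(x_t\) compared with \(w\), and for \(x_{t+h}\), each with Taylor remainders of order \(h^2\).

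\emph{Second-order decomposition.} Write
\[
\rho(t+h,w+w_0)+\rho(t-h,w-w_0)-2\rho(t,w)
\]
as the running-cost differences plus the terminal-cost differences. Then reparametrize by the time shift: compare \(x_+(s+h)\), \(x_-(s-h)\) and \(x(s)\) for \(s\in[t,T-h]\), together with the pieces of length \(h\) at the two ends. For the shifted trajectories, the control values \(\alpha(s\pm h)\) differ from \(\alpha(s)\) by at most \(C h\), and they enter symmetrically. A second-order Taylor expansion in \(h\), using the \(C^1\) regularity of \(f,L\), the Lipschitz bound on \(\alpha\), and the semiconcavity bounds \(\gamma_f,\gamma_L,\gamma_\tau\), should then give
\[
\|x_+(s+h)+x_-(s-h)-2x(s)\|\le C(r)\bigl(h^2+\|w_0\|_{H^1}^2\bigr).
\]
This is done by running the Gronwall argument of \Cref{lem:semiconcavity_trajectories} on \(q(s):=x_+(s+h)+x_-(s-h)-2x(s)\). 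The delayed term \(A(\xi)\) is handled via the midpoint \(m(\xi)\), as in that lemma. Here there are no nonsmooth points to excise, since \(w\) and \(w\pm w_0\) are \(C^{1,1}\) and the junction is smooth thanks to the solution-manifold condition. The initial mismatch, namely the histories \(w+w_0\), \(w-w_0\) and \(w\) viewed at the shifted times, has second difference bounded in \(H^1\)-type norms by \(C(h^2+\|w_0\|_{H^1}^2)\).

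\emph{Cost terms and main obstacle.} The running costs are then handled as in \Cref{thm:space_semiconcavity_value}, and the extra end pieces of length \(h\) contribute \(O(h^2)\) after symmetric cancellation. The terminal costs are handled by the local semiconcavity of \(g\) applied to \((x_\pm)_T\) and \(x_T\), with the \(H^1\) control of the differences of terminal histories.

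I expect the main obstacle to be the junction at \(t\). When the history is shifted by \(\pm h\), the point where the history meets the forward solution moves, and the \(H^1\)-norm of \(\dot{(x_-)}_{t+h}-\dots\) is \(O(h)\) only if the derivatives match at \(0\). Without the matching, a jump in \(\dot x\) would produce an \(O(h^{1/2})\) \(H^1\)-error, whose square is \(O(h)\), not \(O(h^2)\). The first thing I would try is to verify carefully that the condition \(w-w_0\in X_{t,\alpha(t)}\), together with the constant extension of \(\alpha\) on \([t-h,t]\), gives a Lipschitz derivative across the junction, so that all second differences are genuinely quadratic.
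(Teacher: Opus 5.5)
The gap is in your ``second-order decomposition'' step. With the unshifted control, the curves \(X_\pm(s):=x_\pm(s\pm h)\) start at \(s=t\) from the histories \(w\pm w_0\). However, they solve the delay equation with coefficients \(f(s\pm h,\cdot,\cdot,\alpha(s\pm h))\) and delay \(\tau(s\pm h,\cdot)\), so all of the \(h\)-dependence sits in the coefficients. Consequently \(\dot q\) contains \(f(s+h,\cdot,\cdot,\alpha(s+h))+f(s-h,\cdot,\cdot,\alpha(s-h))-2f(s,\cdot,\cdot,\alpha(s))\), plus analogous second differences of \(\tau\) in \(t\) and of \(L\). The hypotheses give second-order bounds only in the state variables (\Cref{ass:semiconcavity}, items 3 and 4). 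In \((t,a)\) you only have \(C^1\), and \(\alpha\) is only Lipschitz. So a ``second-order Taylor expansion in \(h\)'' is not available. Pointwise, the second difference above is only \(O(h)\): at a kink of \(\alpha\), \(\alpha(s+h)+\alpha(s-h)-2\alpha(s)\) is of order \(h\). A first-order expansion leaves remainders that are merely \(o(h)\). Nothing in your sketch turns these into an \(O(h^2)\) contribution after integration over \([t,T-h]\) with a constant depending only on \(r\). Hence the bound \(\|q(s)\|\le C(r)(h^2+\|w_0\|_{H^1}^2)\) is not justified.

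This part is also disconnected from the rest of your plan, in three ways. First, in the shifted variables the initial histories \(w+w_0\), \(w-w_0\), \(w\) have second difference exactly zero, so the compatibility condition plays no role in bounding \(q\). Second, the terminal cost at real time \(T\) involves \(X_+(T-h)\), \(X(T)\), \(X_-(T+h)\), not \(q\) at a single time. Third, only \(w-w_0\) is assumed to lie in \(X_{t,\alpha(t)}\), so the junctions of \(x\) and \(x_+\) are genuinely nonsmooth and would still have to be excised as in \Cref{lem:semiconcavity_trajectories}.

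The repair is to carry out the reduction you announced in your comparison step and stop there; this is exactly the paper's proof. Let \(\bar x\) be the trajectory from \((t-h,w-w_0)\) and \(\bar D(s)=s-\tau(s,\bar x(s))\). Since \(\alpha\) is optimal for \((t,w)\), the dynamic programming principle gives \(\rho(t,w)=\int_t^{t+h}L(s,x(s),x(D(s)),\alpha(s))\,ds+\rho(t+h,x_{t+h})\). It also gives \(\rho(t-h,w-w_0)\le\int_{t-h}^{t+h}L(s,\bar x(s),\bar x(\bar D(s)),\bar\alpha(s))\,ds+\rho(t+h,\bar x_{t+h})\). The paper uses the rescaled control \(\bar\alpha(s)=\alpha((t+h+s)/2)\) on \([t-h,t+h]\); your constant extension would do equally well.

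The three values of \(\rho\) at time \(t+h\) are then bounded by \Cref{thm:space_semiconcavity_value}, applied around the midpoint of \(w+w_0\) and \(\bar x_{t+h}\), together with \(\rho\in\Phi\) to pass from that midpoint to \(x_{t+h}\). This needs two estimates. The first is \(\|w+w_0-\bar x_{t+h}\|_{H^1}^2\le C(r)(h^2+\|w_0\|_{H^1}^2)\). Here the compatibility condition enters as you anticipated, through \(\|\dot{\bar x}(t-h)-(\dot w-\dot w_0)(0)\|\le C(r)h\). This is an \(O(h)\) mismatch rather than an exact \(C^1\) gluing, because the matching holds at time \(t\) while \(\bar x\) starts at \(t-h\). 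The second is only an endpoint and \(L^1\) bound of the same order for \(w+w_0+\bar x_{t+h}-2x_{t+h}\); Young's inequality absorbs the terms of size \(h\|\dot w_0\|_2\).

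The running cost over \([t-h,t+h]\) minus twice that over \([t,t+h]\) only needs an integrand bound \(O(h+\|w_0\|_\infty)\) on an interval of length \(O(h)\). This confines variations in time and control to an interval of length \(O(h)\). On \([t+h,T]\) all comparisons are at equal times with a common control, so no second-order regularity in \((t,a)\) is used.
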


\begin{proof}
Let \(z=w(0)\) and \(z_0=w_0(0)\). Set
\[
    x(s):=x(s\,|\,t,w,\alpha),
    \qquad
    s\in[t,t+h].
\]
Define the rescaled control
\[
    \bar\alpha(s)
    :=
    \alpha\left(\frac{t+h+s}{2}\right),
    \qquad
    s\in [t-h,t+h],
\]
and let
\[
    \bar x(s)
    :=
    x(s\,|\,t-h,w-w_0,\bar\alpha),
    \qquad
    s\in[t-h,t+h].
\]
Put
\[
    \hat z:=x(t+h),
    \qquad
    \bar z:=\bar x(t+h).
\]
By the local boundedness of \(f\), there exists
\(M_f>0\) such that, for all
\(s_1\in[t-h,t+h]\) and \(s_2\in[t,t+h]\),
\begin{equation}\label{eq:barx_x_difference}
    \|\bar x(s_1)-x(s_2)\|
    \le
    \|z_0\|+3M_f h.
\end{equation}
Also,
$$
\|z+z_0-\bar{z}\|=\|2z_0-\int_{t-h}^{t+h}f(s,\bar{x}(s),\bar{x}(s-\tau(s,\bar x(s))),\bar{\alpha}(s))ds\|\leq 2\|z_0\|+2M_f h.
$$

Let
\[
    D(s):=s-\tau(s,x(s)),
    \qquad
    \bar D(s):=s-\tau(s,\bar x(s)).
\]
By Lipschitz properties of $D,w,w_{0},x$ and $\bar{x}$, it can be checked that
\[
    \int_{D(t)}^{D(t+h)}
    \|\bar x(s)-x(s)\|\,ds
    \le
    C(r)\bigl(h^{2}+\|w_0\|_\infty^{2}\bigr).
\]
Using the change of variable
\[
    \eta(s):=2s-t-h,
    \qquad
    s\in[t,t+h],
\]
the local Lipschitz properties of \(f\) and \(\tau\),
and \eqref{eq:barx_x_difference}, we also obtain
\begin{equation}\label{eq:zbar_second_order}
    \|z+z_0+\bar z-2\hat z\|
    \le
    C(r)
        \bigl(
        h^{2}+\|w_0\|_\infty^{2}
    \bigr).
\end{equation}

By the dynamic programming principle,
\[
\begin{aligned}
    &\rho(t+h,w+w_0)
    +
    \rho(t-h,w-w_0)
    -
    2\rho(t,w)                                      \\
    &\le
    \rho(t+h,w+w_0)
    +
    \rho(t+h,\bar x_{t+h})
    -
    2\rho(t+h,x_{t+h})                              \\
    &\quad
    +
    \int_{t-h}^{t+h}
    L(s,\bar x(s),\bar x(\bar D(s)),\bar\alpha(s))\,ds
    -
    2\int_t^{t+h}
    L(s,x(s),x(D(s)),\alpha(s))\,ds.
\end{aligned}
\]
The first three terms on the right-hand side are estimated by \Cref{thm:space_semiconcavity_value} and the Lipschitz continuity of
\(\rho\):
\[
\begin{aligned}
    &\rho(t+h,w+w_0)
    +
    \rho(t+h,\bar x_{t+h})
    -
    2\rho(t+h,x_{t+h})                              \\
    &\le
    C(r)
        \|w+w_0-\bar x_{t+h}\|_{H^{1}}^2                                    \\
    &\quad
    +
    C(r)
    \bigl(
        \|z+z_0+\bar z-2\hat z\|
        +
        \|w+w_0+\bar x_{t+h}-2x_{t+h}\|_1
    \bigr).
\end{aligned}
\]
We now estimate the history terms. By Lipschitz properties of $w$ and $\bar{x} $, we find
\[
\begin{aligned}
    \|w+w_0-\bar x_{t+h}\|_2^2
    \le
    C(r)
    \bigl(
        h^2+\|w_0\|_\infty^2
    \bigr).
\end{aligned}
\]
Since \(w-w_{0}\in X_{t,\alpha (t)}\), we deduce from the \(C^{1}\)-regularity of \(f\) and \(\tau\) that
\[
\begin{aligned}
  &\left\Vert \dot{\bar{x}}(t-h)-(\dot{w}-\dot{w}_{0})(0)  \right\Vert\\
  &=\left\Vert f(t-h,z-z_{0},(w-w_{0})(-\tau(t-h,z-z_{0})),\alpha (t))-f(t,z-z_{0},(w-w_{0})(-\tau(t,z-z_{0})),\alpha (t)) \right\Vert\\
  &\leq C(r)h,
\end{aligned}
\]
where \(\dot {\bar{x}}(t-h)\) denotes the right derivative. Since \(\operatorname{Lip}(\dot w_0)\le r\), we insert the terms \(\dot{\bar{x}}(t-h)\) and \((\dot{w}-\dot{w}_{0})(0)\) to estimate
\[
\begin{aligned}
  &\int_{-\bar{\tau}}^{0} \left\Vert \dot{w}(s)+\dot{w}_{0}(s)-\dot{\bar{x} }(t+h+s)   \right\Vert^{2} \, ds\\
  &=  \int_{-\bar{\tau}}^{-2h} \left\Vert \dot{w}(s)+\dot{w}_{0}(s)-(\dot{w}-\dot{w_{0}}) (2h+s)   \right\Vert^{2} \, ds+\int_{-2h}^{0} \left\Vert \dot{w}(s)+\dot{w}_{0}(s)-\dot{\bar{x} }(t+h+s)   \right\Vert^{2} \, ds\\
  &\leq C(r)(h^{2}+\|w_0\|_{H^1}^2),
\end{aligned}
\]
where we use the Lipschitz continuity of \(\dot{\bar{x}}\) on \([t-h,t+h]\).
Using the semiconcavity of \(w\), the Lipschitz continuity of
\(w_0\), and \eqref{eq:barx_x_difference}, we obtain
\[
\begin{aligned}
    \|w+w_0+\bar x_{t+h}-2x_{t+h}\|_1
    \le
    C(r) 
        \bigl(
        h^{2}+\|w_0\|_{H^1}^{2}
    \bigr).
\end{aligned}
\]
Finally, by the local Lipschitz continuity of \(L\) and the same argument as in the estimate \eqref{eq:zbar_second_order}, we derive
\[
\begin{aligned}
    &\int_{t-h}^{t+h}
    L(s,\bar x(s),\bar x(\bar D(s)),\bar\alpha(s))\,ds
    -
    2\int_t^{t+h}
    L(s,x(s),x(D(s)),\alpha(s))\,ds                 \\
    &\qquad\le
    C(r) 
    \bigl(
        h^{2}+\|w_0\|_\infty^{2}
    \bigr).
\end{aligned}
\]
Combining the previous inequalities yields
\[
\begin{aligned}
    &\rho(t+h,w+w_0)
    +
    \rho(t-h,w-w_0)
    -
    2\rho(t,w)                                      \\
    &\qquad\le
    C(r)
    \bigl(
        h^2+\|w_0\|_{H^1}^2
    \bigr).
\end{aligned}
\]
\end{proof}




\begin{thebibliography}{10} 

\bibitem{MR231007} 
H.~T. Banks. 
\newblock Necessary conditions for control problems with variable time lags. 
\newblock {\em SIAM J. Control}, 6:9--47, 1968. 

\bibitem{MR3702857} 
A.~Boccia and R.~B. Vinter. 
\newblock The maximum principle for optimal control problems with time delays. 
\newblock {\em SIAM J. Control Optim.}, 55(5):2905--2935, 2017. 


\bibitem{MR1014944} Martin Brokate and Fritz Colonius. 
\newblock Linearizing equations with state-dependent delays. 
\newblock {\em Appl. Math. Optim.}, 21(1):45--52, 1990. 

\bibitem{MR2041617} Piermarco Cannarsa and Carlo Sinestrari. 
\newblock {\em Semiconcave functions, {H}amilton-{J}acobi equations, and optimal control}, volume~58 of {\em Progress in Nonlinear Differential Equations and their Applications}. \newblock Birkh\"auser Boston, Inc., Boston, MA, 2004. 


\bibitem{MR4893227} 
Elisa Continelli and Cristina Pignotti.
\newblock Semiconcavity for the minimum time problem in presence of time delay effects. \newblock {\em Math. Control Relat. Fields}, 15(3):1020--1048, 2025. 

\bibitem{MR732102} M.~G. Crandall, L.~C. Evans, and P.-L. Lions.
\newblock Some properties of viscosity solutions of {H}amilton-{J}acobi equations.
\newblock {\em Trans. Amer. Math. Soc.}, 282(2):487--502, 1984. 

\bibitem{MR690039} 
Michael~G. Crandall and Pierre-Louis Lions. 
\newblock Viscosity solutions of {H}amilton-{J}acobi equations. 
\newblock {\em Trans. Amer. Math. Soc.}, 277(1):1--42, 1983. 


\bibitem{MR794776} Michael~G. Crandall and Pierre-Louis Lions. 
\newblock Hamilton-{J}acobi equations in infinite dimensions. {I}. {U}niqueness of viscosity solutions. 
\newblock {\em J. Funct. Anal.}, 62(3):379--396, 1985. 


\bibitem{PhysRevE.111.035313} G.~O. Danilenko, A.~V. Kovalev, D.~S. Citrin, A.~Locquet, D.~Rontani, and E.~A. Viktorov. 
\newblock Reservoir computing with state-dependent time delay. 
\newblock {\em Phys. Rev. E}, 111:035313, March 2025. 

\bibitem{MR3369216} 
Odo Diekmann and Karol\'ina Korvasov\'a. 
\newblock Linearization of solution operators for state-dependent delay equations: a simple example. 
\newblock {\em Discrete Contin. Dyn. Syst.}, 36(1):137--149, 2016. 


\bibitem{MR150421} Rodney~D. Driver. 
\newblock Existence theory for a delay-differential system. 
\newblock {\em Contributions to Differential Equations}, 1:317--336, 1963. 

\bibitem{MR4785300}
 Johanna Frohberg and Marcus Waurick. 
\newblock State-dependent delay differential equations on {$H^1$}. 
\newblock {\em J. Differential Equations}, 410:737--771, 2024. 


\bibitem{MR247556} 
A.~Halanay. 
\newblock Optimal controls for systems with time lag. 
\newblock {\em SIAM J. Control}, 6:215--234, 1968. 


\bibitem{MR2457636} 
Ferenc Hartung, Tibor Krisztin, Hans-Otto Walther, and Jianhong Wu. 
\newblock Functional differential equations with state-dependent delays: theory and applications. 
\newblock In {\em Handbook of differential equations: ordinary differential equations. {V}ol. {III}}, Handb. Differ. Equ., pages 435--545. Elsevier/North-Holland, Amsterdam, 2006. 

\bibitem{RevModPhys.73.1067} Dirk Helbing. 
\newblock Traffic and related self-driven many-particle systems. 
\newblock {\em Rev. Mod. Phys.}, 73:1067--1141, December 2001. 


\bibitem{MR1783365} A.~V. Kim. 
\newblock {\em Functional differential equations}, volume~479 of {\em Mathematics and its Applications}. 
\newblock Kluwer Academic Publishers, Dordrecht, 1999. 
\newblock Application of $i$-smooth calculus. 


\bibitem{MR2729685} 
N.~Yu. Lukoyanov. 
\newblock On optimality conditions for the guaranteed result in control problems for time-delay systems. 
\newblock {\em Proc. Steklov Inst. Math.}, 268:S175--S187, 2010. 

\bibitem{MAHAFFY1998135} 
Joseph~M. Mahaffy, Jacques B\'elair, and Michael~C. Mackey. \newblock Hematopoietic model with moving boundary condition and state-dependent delay: Applications in erythropoiesis. 
\newblock {\em Journal of Theoretical Biology}, 190(2):135--146, 1998. 


\bibitem{martinovich_introducing_2025} 
Krist\'of Martinovich, G\'abor St\'ep\'an, D\'aniel Bachrathy, and \'Ad\'am~K. Kiss. 
\newblock Introducing state-dependent delay in the car-following model. 
\newblock {\em Nonlinear Dynamics}, 113(17):22923--22941, September 2025.


 \bibitem{pmlr-v255-monsel24a} 
 Thibault Monsel, Onofrio Semeraro, Lionel Mathelin, and Guillaume Charpiat.
 \newblock Time and state dependent neural delay differential equations. 
 \newblock In Cec{\'i}lia Coelho, Bernd Zimmering, M.~Fernanda~P. Costa, Lu{\'i}s~L. Ferr{\'a}s, and Oliver Niggemann, editors, {\em Proceedings of the 1st ECAI Workshop on ``Machine Learning Meets Differential Equations: From Theory to Applications''}, volume~255 of {\em Proceedings of Machine Learning Research}, pages 1--20. PMLR, October 2024. 
 
 \bibitem{MR4163474} 
 Anton Plaksin. 
 \newblock Minimax and viscosity solutions of {H}amilton-{J}acobi-{B}ellman equations for time-delay systems. 
 \newblock {\em J. Optim. Theory Appl.}, 187(1):22--42, 2020. 
 
 
 \bibitem{MR4264642} 
 Anton Plaksin. 
 \newblock Viscosity solutions of {H}amilton-{J}acobi-{B}ellman-{I}saacs equations for time-delay systems. 
 \newblock {\em SIAM J. Control Optim.}, 59(3):1951--1972, 2021. 
 
 
 \bibitem{MR745789} 
 A.~I. Subbotin. 
 \newblock Generalization of the main equation of differential game theory. 
 \newblock {\em J. Optim. Theory Appl.}, 43(1):103--133, 1984. 
 
 
 \bibitem{MR1320507} Andre\u{\i}~I. Subbotin. 
 \newblock {\em Generalized solutions of first-order {PDE}s}. 
 \newblock Systems \& Control: Foundations \& Applications. Birkh\"auser Boston, Inc., Boston, MA, 1995. 
 \newblock The dynamical optimization perspective. Translated from the Russian. 
 
 
 \bibitem{MR477959} 
 K.~L. Teo and E.~J. Moore. 
 \newblock Necessary conditions for optimality for control problems with time delays appearing in both state and control variables. 
 \newblock {\em J. Optim. Theory Appl.}, 23(3):413--428, 1977. 
 
 
 
 \bibitem{MR3705373} 
 R.~B. Vinter. 
 \newblock State constrained optimal control problems with time delays. 
 \newblock {\em J. Math. Anal. Appl.}, 457(2):1696--1712, 2018. 
 
 
 \bibitem{MR2019242} Hans-Otto Walther. 
 \newblock The solution manifold and {$C^1$}-smoothness for differential equations with state-dependent delay. 
 \newblock {\em J. Differential Equations}, 195(1):46--65, 2003. 
 
 
 \bibitem{9793393} 
 Jinlong Yuan, Changzhi Wu, Kok~Lay Teo, Shuang Zhao, and Lixia Meng. 
 \newblock Perimeter control with state-dependent delays: Optimal control model and computational method. 
 \newblock {\em IEEE Transactions on Intelligent Transportation Systems}, 23(11):20614--20627, 2022. 
 
 
 \bibitem{10.1063/5.0325998} 
 Jiaxuan Zhang, Qunxi Zhu, and Wei Lin. 
 \newblock A general framework for neural delay differential equations with various delay types. 
 \newblock {\em Chaos: An Interdisciplinary Journal of Nonlinear Science}, 36(5):053108, May 2026. 
 
\end{thebibliography}
\end{document}